\documentclass[11pt,reqno]{amsproc}
\usepackage[colorlinks,linkcolor=Green,citecolor=blue]{hyperref}
\usepackage{lineno}
\modulolinenumbers[5]
\usepackage{times}
\usepackage{amsmath}
\usepackage{amssymb}
\usepackage{mathrsfs}
\usepackage{amsthm}
\usepackage{bm}
\usepackage{graphicx}
\usepackage{algorithm} 
\usepackage{algpseudocode}
\usepackage{color}
\definecolor{Green}{RGB}{0,128,0}
\usepackage{subfigure}
\usepackage{enumitem}
\usepackage{float}
\usepackage{geometry}
\geometry{left=3cm,right=3cm,top=3cm,bottom=3cm}

\newtheorem{definition}{Definition}[section]
\newtheorem{lemma}[definition]{Lemma}
\newtheorem{hypothesis}[definition]{Hypothesis}
\newtheorem{assumption}{Assumption}
\newtheorem{corollary}[definition]{Corollary}
\newtheorem{theorem}[definition]{Theorem}
\newtheorem{example}[definition]{Example}
\newtheorem{proposition}[definition]{Proposition}
\newtheorem{remark}[definition]{Remark}

\usepackage{color,xcolor}
\definecolor{Green}{RGB}{46 139 87}

\newcommand{\ud}{\mathrm d}
\newcommand{\R}{\mathbb{R}}
\numberwithin{equation}{section}

\newcommand{\E}{\mathbb{E}}\allowdisplaybreaks[4]

\begin{document}

\title[LDP and MDP for SPDE on graph]{Large and moderate deviation principles for stochastic partial differential equation on graph}

\subjclass[2010]{60H15, 35R02, 60F10, 60F05}



\author{Jianbo Cui}
\address{Department of Applied Mathematics, The Hong Kong Polytechnic University, Hung Hom, Kowloon, Hong Kong}
\email{jianbo.cui@polyu.edu.hk}

\author{Derui Sheng}
\address{Department of Applied Mathematics, The Hong Kong Polytechnic University, Hung Hom, Kowloon, Hong Kong}
\email{sdr@lsec.cc.ac.cn (Corresponding author)}

\thanks{This work is supported by MOST National Key R\&D Program No. 2024FA1015900, the Hong Kong Research Grant Council GRF grant 15302823, NSFC/RGC Joint Research Scheme N$\_$PolyU5141/24, NSFC grant 12301526, internal funds (P0041274, P0045336) from Hong Kong Polytechnic University, and the
CAS AMSS-PolyU Joint Laboratory of Applied Mathematics.
}

\keywords{Stochastic partial differential equation, metric graph, large deviation principle, moderate deviation principle}

\begin{abstract}
In this paper, we study large and moderate deviation principles for stochastic partial differential equations (SPDEs) on metric graphs and their associated multiscale models via the weak convergence approach, providing a refined characterization of the probabilities of rare events. Several challenges unique to the graph setting are encountered, including operator degeneracy near vertices and the lack of compactness on non-compact graphs. To address these difficulties, we introduce novel weighted Sobolev spaces on graphs, and prove compact embedding results specifically adapted to the degeneracy structure. Our analysis is particularly applicable to SPDEs on graphs arising as limits of stochastic reaction-diffusion systems on narrow domains and from fast-flow asymptotics of stochastic incompressible fluids, yielding new deviation results for these models.
\end{abstract}

\maketitle
\section{Introduction}
Stochastic partial differential equations (SPDEs) on graphs, particularly on metric graphs, 
 have emerged as fundamental models in neuroscience, physics, and network dynamics. They provide a continuum framework for SPDEs on discrete graphs (see, e.g., \cite{MR4612606,CLZ23}), and describe the transmission of electrical signals along dendritic trees, with stochastic impulsive inputs capturing the influence of excitatory and inhibitory signals from neighboring neurons \cite{BMZ08,BM10}. 
The structural flexibility of metric graphs allows the incorporation of nontrivial topologies and diverse boundary conditions at vertex sets. Such equations also arise in various other applications, including free-electron models for organic molecules \cite{LP36}, superconductivity in granular and engineered materials \cite{AS83}, wave propagation in acoustic and electromagnetic networks \cite{CRH87}, Anderson transition in disordered wire \cite{AMR20,SB82}, quantum chaos \cite{BK13}, statistical modeling \cite{BSW24} and more.

Recently, SPDEs on graphs have also provided an effective framework for describing the asymptotic behavior of SPDEs defined on Euclidean domains (see, e.g. \cite{CF17,CF19}). To illustrate, consider the following SPDE on a domain $D\subset \R^2$
\begin{equation}\label{eq:intro2}
\partial_t u_\delta^\epsilon(t,x)=L_{\delta} u_\delta^\epsilon(t,x)
 +b(u_\delta^\epsilon(t,x))+\sqrt{\epsilon}g(u_\delta^\epsilon(t,x))\partial_t\mathcal{W}(t,x)
\end{equation}
for $t\in(0,T]$ and $x\in D$,
where $\mathcal{W}$ is a $Q$-Wiener process, the functions $b, g:\R\to\R$ are Lipschitz continuous, and the parameters $\epsilon,\delta>0$. The dominated operator $\{L_{\delta}\}_{\delta>0}$ in \eqref{eq:intro2} is assumed to be the infinitesimal generator of a fast diffusion process $\{X_\delta(\cdot)\}_{\delta>0}$. 
We are interested in the regime where, asymptotically,
the slow motion $\mathcal{H}(X_\delta)$ converges to a diffusion process $\bar{Y}$ on a graph
 $\Gamma$ as $\delta\to0$, where 
 $\mathcal{H}$ is a mapping from $D$ to $\R$; see section \ref{S:Pre} for concrete examples of $X_\delta$ and $\mathcal{H}$.
The graph $\Gamma$ is constructed by identifying the points of each connected component of the level set $C(z):=\{x\in D:\mathcal{H}(x)=z\}$ for $z\in\R$.
 For a fixed $\epsilon>0$, the small $\delta$ limit of \eqref{eq:intro2} leads to the following
 SPDE on graph 
\begin{equation}\label{eq:intro}
\partial_t \bar{u}^\epsilon(t, z, k)=\bar{L} \bar{u}^\epsilon(t, z, k)+b(\bar{u}^\epsilon(t, z, k))+\sqrt{\epsilon} g(\bar{u}^\epsilon(t, z, k)) \partial_t \bar{\mathcal{W}}(t, z, k)\end{equation}
for $t\in(0,T]$ and $(z,k)\in\Gamma$,
where $\bar{\mathcal{W}}$ is the formal projection of $\mathcal{W}$ on graph (see \eqref{eq:barW}). The graph
 $\Gamma$, equipped with the shortest path metric, becomes a metric space, which, together with the infinitesimal generator $\bar{L}$ of the diffusion process $\bar{Y}$, constitutes a quantum graph 
 (cf.~\cite{BKKS24}). Under Kirchhoff-type gluing conditions at the vertices, \eqref{eq:intro} admits a unique solution $\bar{u}^\epsilon$ in $\mathcal{C}([0,T];\bar{H}_\gamma)$, where $\bar{H}_\gamma$ denotes a weighted $L^2$-space on graph $\Gamma$ associated with a suitable graph weight function $\gamma:\Gamma\to (0,\infty)$ (see Assumption \ref{asp:gamma}). 

The primary goal of this paper is to establish the large and moderate deviation principles (LDPs and MDPs) of the SPDE \eqref{eq:intro} on graph and its related multiscale model \eqref{eq:intro2}.
The LDP and MDP provide a refined characterization of the probabilities of rare events, complementing the law of large numbers and central limit theorem. On one hand, the LDP concerns the exponential decay rates of probabilities of rare events, and has found broad applications in areas such as thermodynamics, statistics, information theory, and engineering \cite{FW12,GLL24,MM20,RW21,WWZ24}. On the other hand, the MDP
describes deviations for centered and rescaled processes at intermediate scales, offering valuable information about convergence rates and serving as a tool for constructing asymptotic confidence intervals; see, e.g., \cite{GZ11,KW83} and references therein.

While significant progress has been made on LDPs and MDPs for SPDEs on Euclidean domains (cf. \cite{BM12,HLL21} and references therein), results for metric graphs remain relatively scarce 
\cite{CH24}. Understanding these deviation principles from Euclidean domains to the setting of graphs introduces several additional challenges.
 First, the operator $\bar{L}$ may be degenerate
near vertices of the graph (see \eqref{eq:barL}). Second, 
 the standard Sobolev embedding $W^{1,2}\hookrightarrow L^2$ may fail to be compact on non-compact graph (see Proposition \ref{lem:counterexample} for more details). Both issues complicate the proof of compactness for the level sets of the rate functions. 
Third, for the deviation principle estimates of \eqref{eq:intro2}, additional difficulties arise in controlling the asymptotics of $L_\delta$ as $\delta\to0$. Last but not least, in contrast to the LDP of \eqref{eq:intro}, studying its MDP requires the weighted \( L^p \)-regularity ($p\ge1$) estimates for the related skeleton equation whose dominated operator $\bar{L}$ is not uniformly elliptic.

To address these challenges, we first introduce another graph weight function \( \gamma_1: \Gamma \to (0, \infty) \) such that \eqref{eq:intro} remains well-posed in \( \mathcal{C}([0,T]; \bar{H}_{\gamma_1}) \), and 
the associated Sobolev space $\bar{\mathbb{W}}^{1,2}_{\gamma_1}$ (see \ref{eq:Lambda})
is compactly embedded into \( \bar{H}_\gamma \) (see Assumption \ref{asp:compact}). This construction allows us to show that the rate functions possess compact level sets and 
are therefore good rate functions. Furthermore, 
 we propose a framework for establishing both LDP and MDP of \eqref{eq:intro} on $\mathcal{C}([0,T];\bar{H}_\gamma)$ via the weak convergence approach and semigroup method (see Theorems \ref{thm:LDP-graph} and \ref{thm:MDP-graph}). 
By analyzing the asymptotic of $L_{\psi(\epsilon)}$, 
 we further demonstrate that the LDP also holds for the family $\{u_{\psi(\epsilon)}^\epsilon\}_{\epsilon>0}$ as $\epsilon \to 0$, provided that $\lim_{\epsilon \to 0}\psi(\epsilon) =0$ (see Theorem \ref{thm:LDP}). For comparison, see \cite{CH24} for a previous result on the LDP of the model in \cite{CF17} with additive noise and $\delta=\epsilon^{\frac 1\kappa}$ for some $\kappa>0$.
Moreover, we find that the convergence of the multiscale model \eqref{eq:intro2} to the SPDE on graph \eqref{eq:intro} is consistent in the small-noise limit, in the sense that the LDPs for the SPDE on the domain $D$
coincides with those for the corresponding SPDE on the graph $\Gamma$ with small noise.
The analysis of the MDP for \eqref{eq:intro2} with $\delta=\psi(\epsilon)$, however, requires a precise understanding of the convergence rate of \eqref{eq:intro2} to \eqref{eq:intro}, which remains unclear and is left for future investigation.

 Our framework and main results on large and moderate deviations are formulated in section \ref{S:MR}, while their proofs are collected in section \ref{S:Pf}.
In section \ref{S:ND}, we focus on an SPDE on a compact graph \( \Gamma \), which originally appeared in \cite{CF17} as the limiting equation of stochastic reaction-diffusion equations on narrow domains \( D_\delta := \{x\in \R^2,(x_1, x_2/\delta) \in D\} \), as the width parameter \( \delta \to 0\).
Section \ref{S:FA} investigates an SPDE on an unbounded graph $\Gamma$, first introduced in \cite{CF19}. This model serves as the limiting equation for the evolution of particle density in $D=\R^2$, driven by an incompressible flow with the stream function $\mathcal{H}$, in the regime where the scaling factor $1/\delta$ of the divergence-free advection tends to infinity. Our main efforts lie in identifying
 suitable $\gamma_1$ and proving the compactness of the embedding $\bar{\mathbb{W}}^{1,2}_{\gamma_1}\hookrightarrow \bar{H}_{\gamma}$. These are achieved through a
 careful analysis of the asymptotic properties of the coefficients $\alpha_k$ and $T_k$ in \eqref{eq:barL} and the application of embedding theorems for weighted Sobolev spaces (see, e.g., \cite{CRW13,GU09}) adapted to the degeneracy structure of the problem.

\section{Preliminaries}\label{S:Pre}
This section collects the preliminaries for the SPDE on the graph \eqref{eq:intro} and its associated multiscale model \eqref{eq:intro2}.
In the sequel,
for a measure space $(\mathbb{M},\mathscr{B}(\mathbb{M}), \tilde{\mu})$ and $1\le p<\infty$, we denote by $L^p(\mathbb{M},\ud\tilde{\mu})$ the space of measurable functions $F:\mathbb{M}\to \R$ with
$\int_{\mathbb{M}}|F(x)|^p\tilde{\mu}(\ud x)<\infty.$ When $\mathbb{M}\subset\R^d$ with some $d\in\mathbb{N}^+$ and $ \tilde\mu(\ud x)=\tilde{w}(x)\ud x$ for a weight function $\tilde{w}:\mathbb{M}\to\R$, we also use the notation $L^p(\mathbb{M},\tilde{w}\ud x)$ or $L^p(\mathbb{M},\tilde{w}(x)\ud x)$. In particular, if $\mathbb{M}\subset\R^d$ and $\tilde{\mu}$ is the Lebesgue measure, we abbreviate $L^p( \mathbb{M},\ud\tilde{\mu})$ as $L^p( \mathbb{M})$, and denote by $W^{k,p}(\mathbb{M})$, $k\in \mathbb{N}^+$, the standard Sobolev space. For convenience, we also use the abbreviations $L^p(a,b)$ and $W^{k,p}(a,b)$ if $\mathbb{M}=(a,b)\subset\R$ is an open interval and $\tilde{\mu}$ is the Lebesgue measure, where $-\infty \le a<b\le \infty$.
Henceforth, given two Hilbert spaces $V_1$ and $V_2$, we write \( \mathcal{L}_2(V_1, V_2) \) (resp. \( \mathcal{L}(V_1, V_2) \)) for the space of Hilbert–Schmidt operators (resp. bounded linear operators) from \( V_1 \) to \( V_2 \), and set \( \mathcal{L}(V_1):=\mathcal{L}(V_1,V_1) \).

We start with the basic setting for a family of diffusion processes $\{X_\delta(t),t\in[0,T]\}_{\delta>0}$, where each $X_\delta(t)$ takes values in an open set $D\subset\R^2$. Such multiscale diffusion processes typically arise from studying averaging principles for systems with conservative laws perturbed by small noise, under a change of time scale (see, e.g., \cite{FW12}).
We are interested in the case where, asymptotically, the transformed process
$\{\mathcal{H}(X_\delta(t)),t\in[0,T]\}$ 
reduces to some stochastic process on a graph as $\delta\to 0$ for some function $\mathcal{H}:D\to \R$.
 Let $\{\textup B(t)\}_{t\ge0}$ be a $2$-dimensional Brownian motion defined on a filtered probability space $(\Omega, \mathcal{F},\{\mathcal{F}_t\}_{t \geq 0}, \mathbb{P})$.
 Two concrete examples of $X_\delta$ and $\mathcal{H}$ are given as follows, which will be revisited in sections \ref{S:ND} and \ref{S:FA}, respectively.
\begin{example}\cite{CF17}\label{Ex:1}
Let $D=B_1(0)=\{x\in\R^2:\|x\|<1\}$ be the unit disk. 
For each $\delta>0$, consider the following diffusion process with reflecting boundary conditions 
\begin{equation}\label{eq:X-ND}
\ud X_\delta(t)=\sqrt{\sigma_\delta}\ud \textup B(t)+\sigma_\delta \textup{\textbf{n}}(X_\delta(t))\ud \mathrm{L}^\delta(t),\quad X_\delta(0)=x_0\in D,
\end{equation}
where $\textup{\textbf{n}}(x)=(\textup{\textbf{n}}_1(x),\textup{\textbf{n}}_2(x))$ denotes the unit inward normal vector at $x\in \partial D$, $\mathrm{L}^\delta(t)$ is the local time of the process $X_\delta(t)$ on $\partial D$, and 
$\sigma_\delta:=\left(\begin{array}{cc}1 & 0 \\0 & \delta^{-2}\end{array}\right).$
We define $\mathcal{H}:D\to\R$ by $\mathcal{H}(x)=x_1$ for any $x=(x_1,x_2)\in D$. We will revisit this example under a more general domain $D$ in subsection \ref{S:NDX}.
\end{example}
\begin{example}\cite{CF19}\label{Ex:2}
For each $\delta>0$, consider the following diffusion process 
\begin{equation}\label{eq:FAXt}
\ud X_\delta(t)=\frac{1}{\delta} \nabla^{\perp}\mathcal{H} \left(X_\delta(t)\right) \ud t+\ud \textup{B}(t),\quad X_\delta(0)=x_0\in D=\R^2,
\end{equation}
where $\nabla^{\perp} =J\cdot\nabla$ with $J=\left(\begin{array}{cc}0 & -1 \\1 & 0\end{array}\right)$ and $\nabla$ being the gradient operator. 
General conditions on the Hamiltonian $\mathcal H$ will be specified in subsection \ref{S:FAX}; as an illustration, one may take
$\mathcal{H}(x)=\|x\|^2+\sqrt{1+\|x\|^2}-1$ for $x\in D$.

\end{example}
\subsection{Diffusion process on graph}
Given a function $\mathcal{H}:D\to \R$, a
 graph $\Gamma$ can be constructed by identifying the points of each connected component of each level set $C(z):=\{x\in D:\mathcal{H}(x)=z\}$ of $\mathcal{H}$. Assume that $\Gamma$ comprises a finite set of vertices $\{O_i\}_{i=1}^{m_1}$ and a finite set of edges $\{I_k\}_{k=1}^m$. A natural projection $\Pi:D\to \Gamma$ is defined as $$\Pi(x)=(\mathcal{H}(x),k(x)),\quad x\in D,$$
 where $k(x)$ is the index of the edge $I_{k(x)}$ containing $\Pi(x)$. Each edge $I_k$ on the graph $\Gamma$ connecting two vertices $O_{k_1}$ and $O_{k_2}$ with $\mathcal{H}(O_{k_1})<\mathcal{H}(O_{k_2})$ can be identified with an interval $(a_k,b_k):=(\mathcal{H}(O_{k_1}),\mathcal{H}(O_{k_2}))\subset\R$, denoted as $I_k\cong(a_k,b_k)$.
 This identification allows any point on $\Gamma$ to be represented by a coordinate $(z,k)$, where $z\in[a_k,b_k]$ and $k\in\{1,\ldots,m\}$. Equipped with the shortest path metric,
 $\Gamma$ becomes a metric space, where the distance between two points is defined as
the length of the shortest path on $\Gamma$ connecting them (see, e.g., \cite{BKKS24}).

Assume that the projected process $\Pi(X_\delta(\cdot))$
 converges, in the sense of weak convergence of distributions in $\mathcal C([0,T];\Gamma)$, to a limiting process $\bar{Y}$ on $\Gamma$.
The limiting process $\bar{Y}$ is a Markov process on the graph $\Gamma$ characterized by its infinitesimal generator
\begin{equation}\label{eq:barL}
\bar{L} f(z, k)=\frac{1}{ 2T_k(z)} \frac{\ud}{\ud z}\left(\alpha_k \frac{\ud f}{\ud z}\right)(z), \quad\text{ if $(z, k)$ is an interior point of $I_k$,}
\end{equation}
where the coefficients are given by
\begin{equation}\label{eq:AT}
\alpha_k(z)=\oint_{C_k(z)} |\nabla \mathcal{H}(x)| \ud l_{z, k},\quad T_k(z)=\oint_{C_k(z)} \frac{1}{|\nabla \mathcal{H}(x)|}\ud l_{z,k}.
\end{equation}
Here $\ud l_{z,k}$ is the length element on the connected component $C_k(z)$ of the level set $C(z)$, corresponding to the edge $I_k$, and
$T_k(z)$ is the
 period of the motion on $C_k(z)$.
The domain Dom$(\bar{L})$ of $\bar{L}$ consists of 
continuous functions $f$ on the graph $\Gamma$ that are twice continuously differentiable in the interior of each edge such that for any vertex $O_i=(z_i,k_{i_j})$ with $j=1,2,\ldots,l$, the limits
$$\lim_{(z,k_{i_j})\to O_i}\bar{L} f(z,k_{i_j}),\quad j=1,2,\ldots, l,$$
exist, are finite, and are independent of the choice of incident edge $I_{k_{i_j}}$. Moreover, at each interior vertex $O_i$, the following gluing condition is satisfied:
\begin{equation}\label{eq:glue}
\sum_{j=1}^l\alpha_{k_{i_j}}(z_i) d_{k_{i_j}}f(z_i,k_{i_j})=0,
\end{equation}
where $d_{k_{i_j}}$ is the differentiation along $I_{k_{i_j}}$ and the sign $+$ is taken if the $\mathcal{H}$-coordinate increases along $I_{k_{i_j}}$ and the sign $-$ is taken otherwise (see also \cite{CF19} for more details).
 The infinitesimal generator $\bar{L}$ of the limiting process $\bar{Y}$ serves as the dominated generator of the parabolic SPDE \eqref{eq:intro} on graph. 

\subsection{Projection and function spaces}
For simplicity, we define the pullback of a function $f:\Gamma\to\R$ via
$
f^{\vee}:=f\circ\Pi
$.
It should be noted that $f^{\vee}$ is a well-defined function on $D$ provided that $f$ is continuous on vertices, i.e., $f(z,k_i)=f(z,k_j)$ whenever $(z,k_i)$ and $(z,k_j)$ correspond to the same interior vertex of $\Gamma$.
For a function $\varphi:D\to \R$, we define its projection $\varphi^\wedge:\Gamma\to \R$ via the averaging over the connected components of level sets of $\mathcal{H}$, namely, for $(z,k)\in\Gamma$,
\begin{equation}\label{eq:wedge}\varphi^\wedge(z,k):=\oint_{C_k(z)} \varphi(x)\ud \mu_{z,k}\quad\text{with}\quad\ud\mu_{z,k}=\frac{1}{T_k(z)} \frac{1}{|\nabla \mathcal{H}(x)|}\ud l_{z,k}.
\end{equation}
The measure $\mu_{z,k}$
is a probability measure supported on $C_k(z)$.
 Note that $(f^{\vee}\varphi)^{\wedge}=f\varphi^\wedge$ for any $f:\Gamma\to \R$ and $\varphi:D\to \R$.
 Given a graph weight function $\tilde{\gamma}:\Gamma\to(0,\infty)$, we denote 
 $H_{\tilde{\gamma}} :=L^2(D,\tilde{\gamma}^\vee(x)\ud x)$ and $\bar{H}_{\tilde{\gamma}}:=L^2(\Gamma,\ud\nu_{\tilde{\gamma}})$, where $\nu_{\tilde{\gamma}}$ is the pushforward measure of ${\tilde{\gamma}}^\vee(x) \ud x$ by $\Pi$, i.e.,
 for any $A\in \mathscr{B}(\Gamma)$,
\begin{align}\label{eq:nugamma}
\nu_{\tilde{\gamma}}(A)=\int_{\Pi^{-1}(A)}\tilde{\gamma}^\vee(x)\ud x
&=\sum_{k=1}^m\int_{I_k}\oint_{C_k(z)}\mathbb{I}_{A}(\Pi(x))\tilde{\gamma}(\Pi(x))\ud x\\\notag
&=\sum_{k=1}^m\int_{I_k} \mathbb{I}_{A}(z,k)\tilde{\gamma}(z,k)T_k(z)\ud z,
\end{align}
 where we have used \eqref{eq:AT} and $\ud x=\frac{1}{|\nabla\mathcal{H}(x)|}\ud l_{z,k}\ud z$ (see \cite[formula (2.5)]{CF19}).
Here, $\mathbb{I}_{A}$ denotes the indicator function on the set $A$. 
Let $\bar{\mathbb{W}}^{1,2}_{\tilde{\gamma}}$ be the space of all functions $f\in \bar{H}_{\tilde{\gamma}}$ such that the weak derivative of $f$ along each edge $I_k$, which we write as $f^\prime(\cdot,k)$ or $\frac{\ud}{\ud z}f(\cdot,k)$, belongs to $L^2(I_k,\alpha_k\tilde{\gamma}\ud z)$.
This space is equipped with the norm
\begin{equation}\label{eq:Lambda}
\|{ f}\|_{\bar{\mathbb{W}}^{1,2}_{\tilde{\gamma}}}:=\left(\sum_{k=1}^m\int_{I_k}\left[|f(z,k)|^2T_k(z)+|f^\prime(z,k)|^2\alpha_k(z)\right]\tilde{\gamma}(z,k)\ud z\right)^{\frac12}.
\end{equation}
 Moreover, for every $M>0$, we denote
\begin{equation}\label{eq:LambdaM}
\Lambda_{M,\tilde{\gamma}}:=\bigg\{\bar{\Phi}\in \mathcal{C}([0,T];{\bar{H}_{\tilde{\gamma}}}):
\sup_{t\in[0,T]}\|\bar{\Phi}(t)\|_{\bar{\mathbb{W}}^{1,2}_{\tilde{\gamma}}}^2
+\int_0^T\|\partial_t\bar{\Phi}(t,z,k)\|_{\bar{H}_{\tilde{\gamma}}}^2\ud t\le M\bigg\}.
\end{equation}

\subsection{SPDE on graph $\Gamma$}
Let $\mathcal{U}_0\subset L^\infty(D)$ be a separable Hilbert space and let
$\mathcal W=\{\mathcal W(t)\}_{t\ge 0}$ be an $\mathcal{U}_0$-cylindrical Wiener process defined on a complete filtered probability space $(\Omega,\mathscr{F},\{\mathscr{F}_t\}_{t\ge 0},\mathbb{P})$. As a consequence, $\mathcal W$ admits the following Karhunen--Loève expansion:
\begin{equation}\label{eq:WKL}
\mathcal W(t,x)=\sum_{i=1}^\infty e_i(x)\beta_i(t),\quad t \geq 0,x\in D,
\end{equation}
where $\{\beta_i\}_{i\in\mathbb N_+}$ is a sequence of independent Brownian motions and $\{e_i\}_{i=1}^\infty$ forms an orthonormal basis of the Hilbert space $\mathcal{U}_0$.
If $\mathcal U_0$ is finite dimensional, the summation in \eqref{eq:WKL} is a finite sum.
 In the sequel, we assume that \begin{equation}\label{eq:ei}
\sup_{x\in D}\sum_{i=1}^\infty|e_i(x)|^2<\infty.
\end{equation}
The process $\bar{\mathcal{W}}$ is the formal projection of $\mathcal{W}$ on the graph $\Gamma$, i.e., 
\begin{equation}\label{eq:barW}
\bar{\mathcal{W}}(t, z, k)=\sum_{j=1}^{\infty}e_j ^{\wedge}(z, k) \beta_j(t), \quad t \geq 0 ,\quad(z, k) \in \Gamma.
\end{equation}
Denote $\bar{\mathcal{U}}_0:=\{\varphi^\wedge\mid \varphi\in \mathcal{U}_0\}$ endowed with the norm $\|\varphi^\wedge\|_{\bar{\mathcal{U}}_0}:=\|\varphi\|_{\mathcal{U}_0}$. 
Then $\{e_i^\wedge\}_{i=1}^\infty$ forms an orthonormal basis of $\bar{\mathcal{U}}_0$, and $\bar{\mathcal{W}}$ is a $\bar{\mathcal{U}}_0$-cylindrical Wiener process. 

In this paper, we are mainly interested in the SPDE \eqref{eq:intro} on graph $\Gamma$ with small noise $(\text{i.e.}, \epsilon\in(0,1]$)
 and the initial value $\bar{u}^\epsilon(0)=\mathfrak{u}^{\wedge}$. Without further explanation, assume that $\mathfrak{u}\in H_\gamma$ and the functions
$b:\R\to\R$ and $g:\R\to\R$ are globally Lipschitz continuous. 
Next we consider the well-posedness of \eqref{eq:intro} in the space $H_\gamma$, where the graph weight function $\gamma:\Gamma\to (0,\infty)$ is always assumed to be bounded, continuous, and fulfills
 \begin{align}\label{eq:IkTk}
\sum_{k=1}^m \int_{I_k} \gamma(z, k) T_k(z) \ud z<\infty.
\end{align} Then
 $\gamma^\vee:\R^2\to(0,\infty)$ is also bounded, continuous and integrable.
From
\eqref{eq:ei}, \eqref{eq:wedge}, and \eqref{eq:IkTk}, we know that $\nu_\gamma$ defined by \eqref{eq:nugamma} is a finite non-negative measure on $(\Gamma,\mathscr{B}(\Gamma))$ and
\begin{equation}\label{eq:eiwedge}
\sup_{(z,k)\in\Gamma}\sum_{i=1}^\infty |e_i^\wedge(z,k)|^2<\infty.
\end{equation}
Moreover, the following contractivity properties hold (see e.g., \cite[section 3]{CF19})
\begin{gather}
\label{fvarphi}
\|\varphi^{\wedge}\|_{\bar{H}_\gamma}\le \|\varphi\|_{ H_\gamma },\quad \|f^{\vee}\|_{ H_\gamma }= \|f\|_{\bar{H}_\gamma}.
\end{gather}
Let $B$ and $G$ be the Nemytskii operators related to $b$ and $g$, respectively, i.e., for $f_1\in \bar{H}_\gamma$ and $f_2\in \bar{\mathcal{U}}_0$, 
\begin{align*}
B(f_1)(z,k)=b(f_1(z,k)),\quad G(f_1)(f_2)(z,k)=g(f_1(z,k))f_2(z,k),\quad (z,k)\in\Gamma.
\end{align*}
It follows from the Lipschitz continuity of $b:\R\to\R$ that $B:\bar{H}_\gamma\to\bar{H}_\gamma$ is globally Lipschitz continuous.
By the Lipschitz continuity of $g$, the inequalities \eqref{eq:eiwedge} and \eqref{eq:IkTk}, one has that the Nemytskii operator $G:\bar{H}_\gamma\to \mathcal{L}_2(\bar{\mathcal{U}}_0;\bar{H}_\gamma)$ is also globally Lipschitz continuous and of linear growth. Namely,
 for any $f_1,f_2\in \bar{H}_\gamma$,
\begin{gather}\label{eq:Glip}
\|B(f_1)-B(f_2)\|_{\bar{H}_\gamma}+\|G(f_1)-G(f_2)\|_{\mathcal{L}_2(\bar{\mathcal{U}}_0,\bar{H}_\gamma)}
\le C\|f_1-f_2\|_{\bar{H}_\gamma},\\\label{eq:Glingr}
\|B(f_1)\|_{\bar{H}_\gamma}+\|G(f_1)\|_{\mathcal{L}_2(\bar{\mathcal{U}}_0,\bar{H}_\gamma)}
\le C(1+\|f_1\|_{\bar{H}_\gamma}).
\end{gather} 
Denote by $\{\bar{S}(t):=e^{t\bar{L}}\}_{t\ge0}$ the Markov semigroup generated by $\bar{L}$. We impose the following assumption to ensure that
$\{\bar{S}(t)\}_{t\in[0,T]}$ is bounded in $L^q(\Gamma,\ud \nu_\gamma)$ for $q\ge 2$ (see Lemma \ref{lem:St} below).
\begin{assumption}\label{asp:gamma}
There exists a constant $C>0$ such that for any $(z,k)\in\Gamma$,
\begin{equation*}
\alpha_k(z)|\frac{\ud\gamma}{\ud z}(z,k)|^2\le CT_k(z)\gamma^2(z,k).
\end{equation*}
\end{assumption}
 \begin{lemma}\label{lem:St}
Under Assumption \ref{asp:gamma},
 for any $q\ge2$, there exists a constant $C:=C(q,T)>0$ such that for any $t\in[0,T]$,
\begin{equation}\label{eq:Stq}\|\bar{S}(t)\|_{\mathcal{L}(L^q(\Gamma,\ud \nu_\gamma))}\le C.
\end{equation}
\end{lemma}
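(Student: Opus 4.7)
The plan is to establish \eqref{eq:Stq} on a dense subspace via an $L^q$ energy estimate of Gr\"onwall type, and then extend by density of $\mathrm{Dom}(\bar{L})$ in $L^q(\Gamma,d\nu_\gamma)$. Starting from $u(t):=\bar{S}(t)f$ with $f\in\mathrm{Dom}(\bar{L})$, I would compute
$$\tfrac{d}{dt}\int_\Gamma |u|^q\,d\nu_\gamma \;=\; q\int_\Gamma |u|^{q-2}u\,\bar{L}u\,d\nu_\gamma.$$
The crucial structural observation is that $d\nu_\gamma=\gamma(z,k)T_k(z)\,dz$ on each edge while $\bar{L}$ carries a prefactor $(2T_k)^{-1}$, so the $T_k$ weights cancel. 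The right-hand side collapses to a sum over edges of one-dimensional integrals of the form $\sum_k\int_{I_k}|u|^{q-2}u\,(\alpha_k u')'\gamma\,dz$.

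Next I would integrate by parts on each edge once. At every interior vertex $O_i$ the common value of $|u|^{q-2}u\,\gamma$ factors out of the contributions from the incident edges (by continuity of $u$ and $\gamma$), so the resulting boundary term is proportional to $\sum_j (\pm)\alpha_{k_{i_j}}(z_i)\,d_{k_{i_j}}u(z_i,k_{i_j})$, which vanishes verbatim by the Kirchhoff gluing condition \eqref{eq:glue}. Boundary contributions at exterior vertices vanish by the standing convention defining $\mathrm{Dom}(\bar{L})$. What remains is a nonpositive dissipation term of order $\int_{I_k}|u|^{q-2}(u')^2\alpha_k\gamma\,dz$ together with a cross term $\int_{I_k}|u|^{q-2}u\,u'\,\alpha_k\gamma'\,dz$ that depends linearly on $\gamma'$.

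The crux is to absorb the cross term into the dissipation. I would apply a weighted Young inequality by splitting the integrand as $(|u|^{(q-2)/2}|u'|\sqrt{\alpha_k\gamma})\cdot(|u|^{q/2}|\gamma'|\sqrt{\alpha_k/\gamma})$ and choose the Young parameter $\tau\sim (q-1)^{-1}$ so that the first factor exactly cancels the dissipation contribution. The surviving residue is then of order $\sum_k\int_{I_k}|u|^q\,(\gamma')^2\alpha_k/\gamma\,dz$, and here Assumption \ref{asp:gamma} enters decisively: it gives $(\gamma')^2\alpha_k/\gamma\le C T_k\gamma$, so the residue is controlled by $C(q)\int_\Gamma |u|^q\,d\nu_\gamma$. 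Gr\"onwall on $[0,T]$ then delivers \eqref{eq:Stq} with $C(q,T)=e^{C(q)T/q}$.

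The main technical obstacle is that $x\mapsto|x|^{q-2}x$ fails to be $C^2$ at the origin when $q<3$, which blocks a naive chain rule on $|u|^q$. I would circumvent this by replacing $|u|^{q-2}u$ throughout by the regularization $(u^2+\eta)^{(q-2)/2}u$, running the entire argument uniformly in $\eta>0$, and letting $\eta\downarrow 0$ by dominated convergence at the end; no additional structural assumption on the graph is needed. A secondary, purely technical point is that the edgewise integration by parts and the vertex cancellation require $u(t)$ to be sufficiently regular along each edge, which is supplied by parabolic smoothing of $\{\bar{S}(t)\}_{t>0}$ acting on $f\in\mathrm{Dom}(\bar{L})$.
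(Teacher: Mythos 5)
Your argument is structurally the same as the paper's: differentiate the $L^q$ energy $\int_\Gamma |u|^q\,\ud\nu_\gamma$ along the semigroup flow, observe that the $T_k$ in $\ud\nu_\gamma$ cancels the $(2T_k)^{-1}$ prefactor of $\bar{L}$, integrate by parts edge-wise, kill the vertex terms via the Kirchhoff gluing condition \eqref{eq:glue}, absorb the $\gamma'$-cross term into the dissipation by a weighted Young inequality, invoke Assumption \ref{asp:gamma} to control the residue by $C\int|u|^q T_k\gamma\,\ud z$, and close by Gr\"onwall. The one genuine divergence is how you pass from smooth powers to general $q\ge2$: the paper runs the computation only for even integers $q=2p$ (so $|u|^{2p}=u^{2p}$ is a polynomial and no chain-rule subtlety arises) and then invokes the Riesz--Thorin interpolation theorem to cover all intermediate $q$, whereas you regularize $|u|^{q-2}u$ by $(u^2+\eta)^{(q-2)/2}u$, run the estimate uniformly in $\eta$, and let $\eta\downarrow 0$. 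Both routes are sound: the regularization is more self-contained (no interpolation machinery), while the paper's route sidesteps the need to track $\eta$-uniformity and a final limit (which is harmless here since $\nu_\gamma(\Gamma)<\infty$ by \eqref{eq:IkTk}). One phrasing nit: you say you choose the Young parameter so the first factor ``exactly cancels the dissipation contribution''; what one actually wants — and what the paper does, choosing the constant $\tfrac12$ — is to absorb only \emph{part} of the dissipation so a strictly nonpositive remainder survives. Full cancellation is not an error (Gr\"onwall still closes since the residue alone is controlled by Assumption \ref{asp:gamma}), but it needlessly discards the dissipativity.
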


The proof of Lemma \ref{lem:St} is provided in Appendix \ref{App:A1}. In particular, 
the special case of $q=2$ in Lemma \ref{lem:St} yields that \begin{equation}\label{Stbound}
 \|\bar{S}(t)\|_{\mathcal{L}(\bar{H}_\gamma )}
 \le C(T)\quad\forall~t\in(0,T].
\end{equation}
This, together with \cite[Theorem 7.5]{DZ14}, \eqref{eq:Glip}, and \eqref{eq:Glingr}, ensures 
 the well-posedness of \eqref{eq:intro}.
\begin{lemma}\label{lem:Wellposedgraph}
 Assume that $b$ and $g$ are globally Lipschitz continuous and $\mathfrak{u}\in H_\gamma$. Under Assumption \ref{asp:gamma}, there exists a unique mild solution $\bar{u}^\epsilon$ to \eqref{eq:intro}, i.e., for any $t\in[0,T]$,
\begin{equation*}
\bar{u}^\epsilon(t)=\bar{S}(t)\mathfrak{u}^\wedge+\int_0^t \bar{S}(t-s) B(\bar{u}^\epsilon(s)) \ud s+\sqrt{\epsilon}\int_0^t \bar{S}(t-s) G(\bar{u}^\epsilon(s)) \ud \bar{\mathcal{W}}(s).
\end{equation*}
Moreover, for any $p\ge1$, there exists some constant $C$ depending on $p$ such that
\begin{equation}\label{eq:udelta}\sup_{t\in[0,T]}\E\left[\|\bar{u}^\epsilon(t)\|_{\bar{H}_\gamma}^p\right]\le C(1+\|\mathfrak{u}\|_{H_\gamma}^p).
\end{equation}
\end{lemma}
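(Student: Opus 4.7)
The plan is to apply the Da Prato--Zabczyk fixed-point framework \cite[Theorem 7.5]{DZ14} in the Hilbert space $\bar{H}_\gamma$, driven by the $\bar{\mathcal{U}}_0$-cylindrical Wiener process $\bar{\mathcal{W}}$ and the semigroup $\{\bar{S}(t)\}_{t\in[0,T]}$. All three ingredients that this theorem requires are already in place: (i) the semigroup bound \eqref{Stbound}; (ii) the Lipschitz and linear-growth estimates \eqref{eq:Glip}--\eqref{eq:Glingr} for $B:\bar{H}_\gamma\to\bar{H}_\gamma$ and $G:\bar{H}_\gamma\to\mathcal{L}_2(\bar{\mathcal{U}}_0,\bar{H}_\gamma)$; and (iii) the inequality $\|\mathfrak{u}^\wedge\|_{\bar{H}_\gamma}\le\|\mathfrak{u}\|_{H_\gamma}<\infty$ from \eqref{fvarphi}, which puts the projected initial datum in $\bar{H}_\gamma$. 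Direct invocation of that theorem produces a unique mild solution $\bar{u}^\epsilon\in L^p(\Omega;\mathcal{C}([0,T];\bar{H}_\gamma))$ for every $p\ge 2$.

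If one wishes to reconstruct the argument directly, I would define
\[
\Phi(v)(t):=\bar{S}(t)\mathfrak{u}^\wedge+\int_0^t\bar{S}(t-s)B(v(s))\,\ud s+\sqrt{\epsilon}\int_0^t\bar{S}(t-s)G(v(s))\,\ud\bar{\mathcal{W}}(s)
\]
on the Banach space of $\{\mathscr{F}_t\}$-predictable processes with finite norm $\sup_{t\in[0,T]}e^{-\lambda t}\bigl(\E\|v(t)\|_{\bar{H}_\gamma}^p\bigr)^{1/p}$ for $\lambda>0$ large. Using \eqref{Stbound}, Hölder's inequality for the drift, and the Hilbert-space It\^o isometry for the stochastic integral together with \eqref{eq:Glip}, one checks that $\Phi$ is a strict contraction, which yields existence and uniqueness; sample-path continuity is obtained via the standard factorization procedure as in \cite[Ch.~5]{DZ14}.

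For the moment bound \eqref{eq:udelta}, I would take $\bar{H}_\gamma$-norms in the mild formulation, raise to the power $p$, and split by the elementary inequality $(a+b+c)^p\le 3^{p-1}(a^p+b^p+c^p)$. The first term is handled by \eqref{Stbound} and \eqref{fvarphi}, giving $\|\bar{S}(t)\mathfrak{u}^\wedge\|_{\bar{H}_\gamma}\le C\|\mathfrak{u}\|_{H_\gamma}$. The drift term is controlled by Hölder's inequality in time together with \eqref{Stbound} and the linear growth in \eqref{eq:Glingr}. For the stochastic convolution, I would apply a Burkholder-type inequality in Hilbert space to obtain
\[
\E\Bigl\|\int_0^t\bar{S}(t-s)G(\bar{u}^\epsilon(s))\,\ud\bar{\mathcal{W}}(s)\Bigr\|_{\bar{H}_\gamma}^p\le C\,\E\Bigl(\int_0^t\|\bar{S}(t-s)G(\bar{u}^\epsilon(s))\|_{\mathcal{L}_2(\bar{\mathcal{U}}_0,\bar{H}_\gamma)}^2\,\ud s\Bigr)^{p/2},
\]
and use \eqref{Stbound} and \eqref{eq:Glingr} inside to reduce the right-hand side to $C\int_0^t\bigl(1+\E\|\bar{u}^\epsilon(s)\|_{\bar{H}_\gamma}^p\bigr)\,\ud s$. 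Gronwall's lemma then delivers \eqref{eq:udelta}.

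Since every quantitative estimate reduces to those already proved, I do not expect any genuine obstacle in this lemma; the only delicate input—the $L^q$-boundedness of $\bar{S}(t)$—is supplied by Lemma \ref{lem:St}. The argument is essentially a verification that the abstract Da Prato--Zabczyk framework applies verbatim in the present graph setting, once the semigroup and the Nemytskii operators $B$, $G$ have been properly identified on $\bar{H}_\gamma$.
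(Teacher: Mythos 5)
Your argument follows exactly the route the paper takes: the paper proves this lemma simply by noting, immediately before the statement, that it follows from \eqref{Stbound}, \cite[Theorem 7.5]{DZ14}, \eqref{eq:Glip}, and \eqref{eq:Glingr}, which are precisely the three ingredients you identify and then spell out in more detail via the fixed-point and Gronwall arguments. The only trivial point to keep in mind is that the moment bound is asserted for all $p\ge 1$, but once it is established for $p\ge 2$ the case $p\in[1,2)$ follows immediately from Jensen's inequality, so there is no genuine gap.
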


\subsection{Connections between SPDE on graph and multiscale SPDE}
In this subsection, we illustrate that the SPDE \eqref{eq:intro} on graph $\Gamma$ can be viewed as the limiting equation of a multiscale SPDE on the domain $D$.

For a fixed $\epsilon>0$, we consider the multiscale SPDE \eqref{eq:intro2}
with the initial value $u_{\delta}^\epsilon(0)= \mathfrak{u}$, where $L_\delta$ is the infinitesimal generator of the diffusion process $X_\delta$.
With a slight abuse of notation,
we also let $B:H_\gamma\to H_\gamma$ and $G:H_\gamma\to \mathcal{L}_2(\mathcal{U}_0,H_\gamma)$ be the Nemytskii operators related to $b$ and $g$, respectively, i.e., for $\varphi_1\in H_\gamma$ and $\varphi_2\in\mathcal{U}_0$,
\begin{align*}
B(\varphi_1)(x)=b(\varphi_1(x)),\quad G(\varphi_1)(\varphi_2)(x)=g(\varphi_1(x))\varphi_2(x),\quad x\in\R^2.
\end{align*}
 For every $\delta>0$, we denote by $\{S_\delta(t):=e^{tL_\delta }\}_{t\ge0}$ the semigroup generated by $L_\delta $.
\begin{assumption}\label{Asp:SG-strong}
There exists a constant $K>0$ such that $\|S_\delta(t)\|_{\mathcal{L}( H_\gamma )}\le K$ for any fixed $\delta>0$ and $t\in[0,T]$.
\end{assumption}

The existence and uniqueness of the mild solution $u_\delta^\epsilon$ to \eqref{eq:intro2} follows from Assumption \ref{Asp:SG-strong} and the Lipschitz continuity of $b,g:\R\to \R$.
Since the constant $K$ in Assumption \ref{Asp:SG-strong} is independent of $\delta\in(0,1]$, it follows from 
 a stochastic factorization argument (see, e.g., \cite[section 5.3.1]{DZ14}) that, for any $p\ge1$, $u_\delta$ is uniformly bounded in $L^p(\Omega;\mathcal{C}([0,T]; H_\gamma ))$ with respect to $\delta\in(0,1]$ (cf. \cite[formula (5.8)]{CF19}).
Recall that as $\delta$ tends to $0$, the projected process $\Pi(X_\delta)$ converges to $\bar{Y}$, which corresponds to the infinitesimal generator $\bar{L}$. Furthermore, we assume
that
the asymptotics of $S_\delta(t)$ can be described by $\bar{S}(t)^\vee$ in the small $\delta$ limit.

\begin{assumption}\label{asp:S-S}
For any $\varphi \in H _{\gamma}$ and $0<\tau_0 < T$,
\begin{equation*}
\lim _{\delta \rightarrow 0} \sup _{t \in[\tau_0, T]}\left\|S_\delta(t) \varphi-\bar{S}(t)^{\vee} \varphi\right\|_{ H_\gamma }=0,
\end{equation*}
where $\bar{S}(t)^{\vee} \varphi:=(\bar{S}(t) \varphi^\wedge)^{\vee}$ for any $t\in[0,T]$.
\end{assumption}
\begin{remark}\label{rem:L2}
Combining Assumptions \ref{Asp:SG-strong} and \ref{asp:S-S}, for any $t\in(0,T]$, by choosing $\tau_0\in(0,t)$ and using the triangle inequality, it holds that for any $\delta>0$,
\begin{align*}
\|\bar{S}(t)^{\vee}\varphi\|_{ H_\gamma } \le K\|\varphi\|_{ H_\gamma } + \sup _{s\in[\tau_0, T]}\left\|S_\delta(s) \varphi-\bar{S}(s)^{\vee} \varphi\right\|_{ H_\gamma }.
\end{align*}
Letting $\delta\to 0^+$ and by the contractive property \eqref{fvarphi}, we can obtain \eqref{Stbound}.
We note that either Assumption \ref{asp:gamma} or the combination of Assumptions \ref{Asp:SG-strong} and \ref{asp:S-S} can be used to prove \eqref{Stbound}.
Consequently, Assumption \ref{asp:gamma} in Lemma \ref{lem:Wellposedgraph} can be replaced by Assumptions \ref{Asp:SG-strong} and \ref{asp:S-S} to ensure the well-posedness of \eqref{eq:intro}. 
\end{remark}

Under Assumption \ref{asp:S-S},
taking the limit as $\delta\to 0$ in \eqref{eq:intro2} yields \eqref{eq:intro}. In fact, following the proof of \cite[Theorem 7.2]{CF17} and \cite[Theorem 5.3]{CF19}, it can be verified that if Assumptions \ref{Asp:SG-strong} and \ref{asp:S-S} hold, then
 for every $\epsilon\in[0,1]$, $p\ge1$ and $0< \tau_0< T$,
 \begin{equation}\label{eq:u-u}
\lim_{\delta\to0}\E\bigg[\sup_{t\in[\tau_0, T]}\|u_\delta^\epsilon(t)-\bar{u}^\epsilon(t)^\vee\|_{H_\gamma}^p\bigg]=0.
\end{equation}
In particular, the asymptotical behavior described by \eqref{eq:u-u} holds for the settings discussed in sections \ref{S:ND} and \ref{S:FA}.

\section{Main results}\label{S:MR}
In this section, we introduce the mathematical framework for studying the moderate and large deviations of the SPDE on graph, motivated by the investigation of the asymptotical behavior of the models in \cite{CF17} and \cite{CF19} with small noise. In particular, the models from \cite{CF17} and \cite{CF19}
will be discussed in sections \ref{S:ND} and \ref{S:FA}, respectively.
 To illustrate our main results, we recall the definition of the LDP \cite{CX10,DZ10}. Let $\mathcal X$ be a Polish space. A real-valued function $\mathbf{I}:\mathcal X\rightarrow[0,\infty]$ is called a rate function if it is lower semi-continuous, i.e., for each $a\in[0,\infty)$, the level set $\mathbf{I}^{-1}([0,a]) := \{x \in \mathcal X \mid \mathbf{I}(x) \in [0,a] \}$ is a closed subset of $\mathcal X$. Furthermore, if the level set $\mathbf{I}^{-1}([0,a])$ is compact for any $a\in[0,\infty)$, then $\mathbf{I}$ is called a good rate function.

\begin{definition}
A family of $\mathcal X$-valued random variables $\{\mathbf{X}^\epsilon\}_{\epsilon>0}$ defined on a probability space $(\Omega,\mathscr F,\mathbb P)$ is said to satisfy an LDP on $\mathcal X$ with the speed $\upsilon(\epsilon)$ and the rate function $\mathbf{I}$ if for every Borel set $U$ of $\mathcal X$,
\begin{align*}
-\inf_{x\in U^\circ} \mathbf{I}(x)\le\liminf_{\epsilon\to 0}\frac{1}{\upsilon(\epsilon)}\ln\mathbb P\{\mathbf{X}^\epsilon\in U\}\le\limsup_{\epsilon\to 0}\frac{1}{\upsilon(\epsilon)}\ln\mathbb P\{\mathbf{X}^\epsilon\in U\}\leq-\inf_{x\in\bar U} \mathbf{I}(x),
\end{align*}
where $U^\circ$ and $\bar U$ denote the interior and closure of $U$ in $\mathcal X$, respectively.
\end{definition}

Passing to the limit as $\epsilon\to 0$ in \eqref{eq:intro} formally results in a PDE on graph
\begin{equation}\label{eq:utzk-deter}
\partial_t \bar{u}^0(t, z, k)=\bar{L} \bar{u}^0(t, z, k)+b(\bar{u}^0(t, z, k)).
\end{equation}
for $(t,z,k)\in[0, T]\times\Gamma$ with the initial value $\bar{u}^0(0)=\mathfrak{u}^{\wedge}$. Indeed, by \eqref{Stbound} as well as \eqref{eq:udelta}, it can be proved that for any $p\ge1$,
\begin{equation*}
\E\bigg[\sup_{t\in[0,T]}\|\bar{u}^\epsilon(t)-\bar{u}^0(t)\|_{\bar{H}_\gamma}^p\bigg]\le C(p,\|\mathfrak{u}\|_{H_\gamma},T)\epsilon^{\frac{p}{2}}.
\end{equation*}
In this work, we investigate deviations of $\bar{u}^\epsilon$ from the deterministic solution $\bar{u}^0$, as $\epsilon$ tends to $0$, namely, the large deviation estimate of the trajectory
\begin{equation}\label{eq:X}
\bar{X}^\epsilon(t,z,k):=\frac{\bar{u}^\epsilon(t,z,k)-\bar{u}^0(t,z,k)}{\sqrt{\epsilon}\lambda(\epsilon)},\quad t\in[0,T],
\end{equation}
where $\lambda(\epsilon)$ is some deviation scale influencing the asymptotic behavior of $\bar{X}^\epsilon$.
\begin{enumerate}
\item[(I)] The case $\lambda(\epsilon)=1/\sqrt{\epsilon}$ provides an LDP of $\bar{u}^\epsilon$. 
\item[(II)] If $\bar{X}^\epsilon$ satisfies an LDP with the deviation scale fulfilling
\begin{equation}\label{eq:MDPpara}\lambda(\epsilon)\to+\infty,\quad \sqrt{\epsilon}\lambda(\epsilon)\to0 \text{ as }\epsilon\to 0,
\end{equation}
then we say that $\bar{u}^\epsilon$ satisfies an MDP. This deviation property can be seen as an intermediate behavior between the central limit theorem and LDP.\end{enumerate}

For each $\phi\in L^2(0,T;\mathcal{U}_0)$, let $\bar{Z}^{\phi}$ be the unique mild solution to the following skeleton equation 
\begin{equation} \label{eq.skeleton-eq}
\left\{
\begin{split}
\partial_t\bar{Z}^{\phi}(t)&=\bar{L} \bar{Z}^{\phi}(t)
 +B(\bar{Z}^{\phi}(t))+G(\bar{Z}^{\phi}(t))\phi(t)^\wedge, \quad t\in[0,T],\\
 \bar{Z}^{\phi}(0) &= \mathfrak{u}^{\wedge}.
 \end{split}
 \right.
\end{equation}
Utilizing the Lipschitz continuity of $B:\bar{H}_\gamma\to\bar{H}_\gamma$ and $G:\bar{H}_\gamma\to \mathcal{L}_2(\bar{\mathcal{U}}_0;\bar{H}_\gamma)$ (see \eqref{eq:Glip}), as well as \eqref{Stbound}, it can be shown that for any $\phi\in L^2(0,T;\mathcal{U}_0)$,
there exists a unique mild solution $\bar{Z}^{\phi}$ to \eqref{eq.skeleton-eq} in $ \mathcal C([0, T] ; \bar{H}_\gamma)$ (see, e.g., \cite[Theorem 4.1]{CR04} for a similar proof). Namely, for any $t\in[0,T]$,
	\begin{equation}\label{eq:ut}
		\bar{Z}^{\phi}(t)=\bar{S}(t) \mathfrak{u}^{\wedge}+\int_0^t \bar{S}(t-s) B(\bar{Z}^{\phi}(s)) \ud s+\int_0^t \bar{S}(t-s) G(\bar{Z}^{\phi}(s)) \phi(s)^\wedge \ud s.
	\end{equation}
	According to \eqref{eq:ut}, \eqref{Stbound}, and the Cauchy--Schwarz inequality, it holds that 
	\begin{align*}
	\|\bar{Z}^{\phi}(t)\|^2_{\bar{H}_\gamma}
		&\le C\|\mathfrak{u}^{\wedge}\|^2_{\bar{H}_\gamma}+C(1+\|\phi\|_{L^2(0,T;\mathcal U_0)}^2)\int_0^t(1+\|\bar{Z}^{\phi}(s)\|_{\bar{H}_\gamma}^2)\ud s,\quad t\in[0,T],
	\end{align*}
	due to the linear growth of $B$ and $G$ in \eqref{eq:Glingr}.
	Taking supremum over $t\in[0,t_1]$ on both sides, we obtain that for any $t_1\in[0,T]$,
	\begin{align}\label{eq:Zphi}
		\|\bar{Z}^{\phi}\|^2_{\mathcal{C}([0,t_1];\bar{H}_\gamma)}\le C_1+C\left(1+\|\phi\|_{L^2(0,T;\mathcal U_0)}^2\right)\int_0^{t_1}\|\bar{Z}^{\phi}\|_{\mathcal{C}([0,s];\bar{H}_\gamma)}^2\ud s,
	\end{align}
	where $C_1$ is some constant dependent on $\| \mathfrak{u} \|_{ H_\gamma },\|\phi\|_{L^2(0,T;\mathcal U_0)}$, and $T$.
	This allows us to apply the Gronwall inequality to conclude 
	\begin{equation}\label{eq:Zvt}
		\|\bar{Z}^{\phi}\|_{\mathcal{C}([0,T];\bar{H}_\gamma)}\le C(\|\phi\|_{L^2(0,T;\mathcal{U}_0)},\| \mathfrak{u} \|_{ H_\gamma },T).
	\end{equation}	

To establish the LDP of \eqref{eq:intro}, we impose the following assumption.

\begin{assumption}\label{asp:compact}
There exists a bounded and continuous graph weight function $\gamma_1:\Gamma\to (0,\infty)$ such that 
\begin{enumerate}
\item[(i)] Assumption \ref{asp:gamma} and \eqref{eq:IkTk} hold with $\gamma$ replaced by $\gamma_1$;
\item[(ii)]
 For any $M>0$, the set $\Lambda_{M,\gamma_1}$ (see \eqref{eq:LambdaM})
 is a pre-compact set of $\mathcal {C}([0,T];\bar{H}_{\gamma})$.
 \end{enumerate}
\end{assumption}

\begin{remark}\label{rem:Hgamma}
By the Aubin--Lions lemma and \eqref{eq:LambdaM}, a sufficient condition for Assumption \ref{asp:compact}(ii) is that the space
$\bar{\mathbb{W}}^{1,2}_{\gamma_1}$
is compactly embedded into $\bar{H}_{\gamma}$.
\end{remark}

Notice that the graph weight function $\gamma_1$ can be chosen as $\gamma$ in Assumption \ref{asp:compact} if $\Lambda_{M,\gamma}$ is already a pre-compact set of $\mathcal {C}([0,T];\bar{H}_{\gamma})$.
Otherwise, one shall find a different graph weight function $\gamma_1$ 
to ensure Assumption \ref{asp:compact}(ii). 
Assumption \ref{asp:compact} will play an important role in verifying that \begin{equation}\label{eq:rate function-J0}
J_0(x):=\inf_{\left\{\phi\in L^2(0,T;\, \mathcal{U}_0 ),\,x=(\bar{Z}^\phi(\cdot))^\vee\right\}}\frac12 \int_0^T\|\phi(s)\|_{\mathcal{U}_0}^2\ud s,\quad x\in\mathcal{C}([0,T]; H_\gamma ).
\end{equation} is a good rate function.

Our first main result is the LDP of \eqref{eq:intro}.
\begin{theorem}\label{thm:LDP-graph}
 Assume that $b$ and $g$ are globally Lipschitz continuous and $\mathfrak{u}\in H_\gamma\cap H_{\gamma_1}$. 
Then under Assumptions \ref{asp:gamma} and \ref{asp:compact}, 
the family $\{(\bar{u}^\epsilon)^\vee\}_{\epsilon\in(0,1]}$ satisfies the LDP on $\mathcal{C}([0,T]; H_\gamma )$ as $\epsilon\to0$ with the speed $\epsilon^{-1}$ and the good rate function \eqref{eq:rate function-J0}.
\end{theorem}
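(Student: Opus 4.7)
The plan is to invoke the Budhiraja--Dupuis--Maroulas variational framework for LDPs driven by cylindrical Wiener processes. Since $\bar{\mathcal{W}}$ is a $\bar{\mathcal{U}}_0$-cylindrical Wiener process and the map $\mathcal{C}([0,T];\bar{H}_\gamma)\ni f\mapsto f^\vee\in\mathcal{C}([0,T];H_\gamma)$ is a continuous isometry by \eqref{fvarphi}, it suffices to establish the LDP for $\{\bar{u}^\epsilon\}$ on $\mathcal{C}([0,T];\bar{H}_\gamma)$ with the good rate function $\bar{J}_0(y):=\inf\{\tfrac12\int_0^T\|\phi(s)\|_{\mathcal{U}_0}^2\,ds:\phi\in L^2(0,T;\mathcal{U}_0),\, y=\bar{Z}^\phi\}$; the rate function $J_0$ on $\mathcal{C}([0,T];H_\gamma)$ then arises by contraction. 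The weak convergence approach reduces the LDP to verifying two conditions: (i) compactness of the skeleton map $\phi\mapsto\bar{Z}^\phi$ from $S^M:=\{\phi\in L^2(0,T;\mathcal{U}_0):\int_0^T\|\phi(s)\|_{\mathcal{U}_0}^2\,ds\le M\}$ (equipped with the weak topology) into $\mathcal{C}([0,T];\bar{H}_\gamma)$; and (ii) weak convergence of the controlled SPDEs
\begin{equation*}
\partial_t \bar{u}^{\epsilon,\phi_\epsilon}=\bar{L}\bar{u}^{\epsilon,\phi_\epsilon}+B(\bar{u}^{\epsilon,\phi_\epsilon})+\sqrt{\epsilon}G(\bar{u}^{\epsilon,\phi_\epsilon})\partial_t\bar{\mathcal{W}}+G(\bar{u}^{\epsilon,\phi_\epsilon})\phi_\epsilon^\wedge,\quad \bar{u}^{\epsilon,\phi_\epsilon}(0)=\mathfrak{u}^\wedge,
\end{equation*}
to $\bar{Z}^\phi$ whenever $\phi_\epsilon\to\phi$ in distribution as $S^M$-valued random variables.

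For condition (i), I would first derive an a priori bound of the form $\sup_{\phi\in S^M}\|\bar{Z}^\phi\|_{\mathcal{C}([0,T];\bar{H}_\gamma)}\le C(M,T,\|\mathfrak{u}\|_{H_\gamma})$, which is already contained in \eqref{eq:Zvt}. The crucial new ingredient is to prove that $\bar{Z}^\phi$ actually lies in $\Lambda_{M',\gamma_1}$ for some $M'=M'(M,T,\|\mathfrak{u}\|_{H_\gamma\cap H_{\gamma_1}})$. This requires: (a) rerunning the mild-solution estimate in $\bar{H}_{\gamma_1}$ using Assumption \ref{asp:compact}(i) (which gives the semigroup bound \eqref{Stbound} for $\gamma_1$ via Lemma \ref{lem:St}), (b) using the energy identity obtained by testing with $\bar{Z}^\phi$ in $\bar{H}_{\gamma_1}$ together with an integration-by-parts argument exploiting the Kirchhoff gluing condition \eqref{eq:glue} to get the $\bar{\mathbb{W}}^{1,2}_{\gamma_1}$-bound, and (c) reading off the $L^2(0,T;\bar{H}_{\gamma_1})$-bound on $\partial_t\bar{Z}^\phi$ directly from the equation \eqref{eq.skeleton-eq} and the Lipschitz/linear-growth bounds \eqref{eq:Glip}--\eqref{eq:Glingr}. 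Assumption \ref{asp:compact}(ii) then yields pre-compactness of $\{\bar{Z}^\phi:\phi\in S^M\}$ in $\mathcal{C}([0,T];\bar{H}_\gamma)$. Continuity with respect to weak convergence of $\phi$ follows from a standard Gronwall argument after identifying limits along subsequences: given $\phi_n\rightharpoonup\phi$ in $L^2(0,T;\mathcal{U}_0)$, pass to a convergent subsequence $\bar{Z}^{\phi_n}\to y$ in $\mathcal{C}([0,T];\bar{H}_\gamma)$, then pass to the limit in the mild formulation \eqref{eq:ut} using the Lipschitz continuity of $B,G$ and the weak convergence of the stochastic control term (which becomes weak convergence in $L^2(0,T;\bar{H}_\gamma)$ against the $\bar{H}_\gamma$-valued Bochner-integrable kernel $\bar{S}(t-\cdot)G(y(\cdot))e_i^\wedge$).

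For condition (ii), I would define the error $V^\epsilon:=\bar{u}^{\epsilon,\phi_\epsilon}-\bar{Z}^{\phi_\epsilon}$ (comparing with the same control) and split the analysis into two pieces: a vanishing stochastic integral term and a deterministic drift perturbation. Using Lemma \ref{lem:St} with $q=2$, the Lipschitz continuity of $B,G$, the uniform Skorokhod-type bound $\sup_\epsilon\E[\int_0^T\|\phi_\epsilon\|_{\mathcal{U}_0}^2\,ds]\le M$, and Burkholder--Davis--Gundy, I would deduce $\E\|V^\epsilon\|_{\mathcal{C}([0,T];\bar{H}_\gamma)}^2\to 0$ via Gronwall. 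Combined with the continuity established in condition (i) (which gives $\bar{Z}^{\phi_\epsilon}\to\bar{Z}^\phi$ in distribution on $\mathcal{C}([0,T];\bar{H}_\gamma)$ via the Skorokhod representation theorem on $S^M$), this yields $\bar{u}^{\epsilon,\phi_\epsilon}\to\bar{Z}^\phi$ in distribution.

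The main obstacle is condition (i), specifically extracting the $\bar{\mathbb{W}}^{1,2}_{\gamma_1}$-regularity from the skeleton equation despite the degeneracy of $\bar{L}$ at the vertices. The Kirchhoff gluing condition must be used carefully so that the boundary terms produced by integration by parts along each edge cancel when summed over edges meeting at a common vertex, enabling the clean energy identity $\tfrac{d}{dt}\|\bar{Z}^\phi\|_{\bar{H}_{\gamma_1}}^2+\sum_k\int_{I_k}\alpha_k|(\bar{Z}^\phi)'|^2\gamma_1\,dz\,(\text{plus lower order})\le (\text{controlled terms})$; subtle modifications of $\gamma_1$ (such as those arising from Assumption \ref{asp:gamma} applied to $\gamma_1$) may be needed to absorb the weight derivative in the integration by parts.
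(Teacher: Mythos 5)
The overall framework you propose — the weak-convergence/variational approach, the reduction to the two Budhiraja--Dupuis-type conditions (compactness/continuity of the skeleton map, and convergence of the controlled SPDEs), and the final transfer via the isometry $f\mapsto f^\vee$ — is the same as the paper's (which invokes the variant Condition~A$'$ of \cite{MSZ21}). Your treatment of condition~(ii) is also essentially identical to the paper's Lemma~\ref{lem:Y-Z}. The problem is in your plan for condition~(i).

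You propose to prove that $\bar{Z}^\phi$ itself lies in $\Lambda_{M',\gamma_1}$ uniformly for $\phi\in S_N$, by an energy identity obtained from testing the skeleton equation. But $\Lambda_{M',\gamma_1}$ requires the bound $\sup_{t\in[0,T]}\|\bar{Z}^\phi(t)\|_{\bar{\mathbb{W}}^{1,2}_{\gamma_1}}^2\le M'$, and at $t=0$ this quantity is $\|\mathfrak{u}^\wedge\|_{\bar{\mathbb{W}}^{1,2}_{\gamma_1}}^2$. The theorem only assumes $\mathfrak{u}\in H_\gamma\cap H_{\gamma_1}$, i.e.\ $\mathfrak{u}^\wedge\in\bar H_{\gamma_1}$, with no Sobolev regularity, so this quantity can be infinite. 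Your energy estimate — testing with $\partial_t\bar Z^\phi$ and integrating by parts along each edge, using the Kirchhoff gluing condition — is exactly how the paper obtains such a bound in Lemma~\ref{lemma:finite}(1), but there it is applied to $\bar\Phi=\bar\Xi(y)$, which satisfies $\bar\Phi(0)=0$; that vanishing initial condition is used explicitly to start the Gronwall argument. Testing with $\bar Z^\phi$ instead of $\partial_t\bar Z^\phi$ only gives $\bar Z^\phi\in L^2(0,T;\bar{\mathbb{W}}^{1,2}_{\gamma_1})$, not the uniform-in-time bound that $\Lambda_{M',\gamma_1}$ demands; and parabolic smoothing gives $\|\bar Z^\phi(t)\|_{\bar{\mathbb{W}}^{1,2}_{\gamma_1}}\lesssim t^{-1/2}$ near $t=0$, which is not bounded. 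So the claim that $\bar Z^\phi\in\Lambda_{M',\gamma_1}$ fails under the stated hypotheses, and the subsequent extraction of convergent subsequences from $\{\bar Z^{\phi_n}\}$ has no justification.

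The paper avoids this obstacle entirely: in Lemma~\ref{lem:compact} it writes the difference $\bar Z^\phi-\bar Z^{\phi_n}$, where the free term $\bar S(t)\mathfrak{u}^\wedge$ cancels, applies Gronwall to reduce the estimate to the single term $\bar\Xi\bigl(G((\bar Z^\phi)^\vee)(\phi_n-\phi)\bigr)$, and then invokes compactness of the convolution operator $\bar\Xi\colon L^2(0,T;H_{\gamma_1})\to\mathcal C([0,T];\bar H_\gamma)$ (Lemma~\ref{lemma:finite}(2), which does use your energy argument, but only for zero initial data) together with the weak convergence $G((\bar Z^\phi)^\vee)\phi_n\rightharpoonup G((\bar Z^\phi)^\vee)\phi$ in $L^2(0,T;H_{\gamma_1})$. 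This gives continuity of $\phi\mapsto\bar Z^\phi$ from $(S_N,\text{weak})$ into $\mathcal C([0,T];\bar H_\gamma)$; compactness of the range then follows from weak compactness of $S_N$, with no need to place $\bar Z^\phi$ itself into a $\Lambda_{M',\gamma_1}$-type set. You should adopt this difference/cancellation strategy rather than attempting to control $\bar Z^\phi$ directly.
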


Inspired by \cite{CH24}, we 
further 
extend the large deviation result (Theorem \ref{thm:LDP-graph}) to the multiscale SPDE in the continuous domain $D$.
Note that the solution $u_\delta^\epsilon$ to the multiscale SPDE \eqref{eq:intro2} 
depends on two parameters: the noise intensity $\epsilon$ and the multiscale parameter $\delta$. To study their joint limiting behavior, we relate them through $\delta=\psi(\epsilon)$ for some 
continuous function $\psi:(0,1]\to (0,\infty)$ satisfying 
\begin{equation}\label{eq:deltaepsilon}
\lim_{\epsilon\to 0}\psi(\epsilon)=0.
\end{equation}
Formally, the condition \eqref{eq:deltaepsilon} is imposed so that as $\epsilon\to 0$, the rescaled process $u^\epsilon_{\psi(\epsilon)}$ share the same limiting process $(\bar{u}^0)^\vee$ (see \eqref{eq:utzk-deter}) as the process $(\bar{u}^\epsilon)^\vee$.
Next we 
show that the convergence \eqref{eq:u-u} is consistent with respect to the small-noise limit, in the sense that $\{u^\epsilon_{\psi(\epsilon)}\}_{\epsilon\in(0,1]}$ satisfies an LDP with the same large deviation rate function (see \eqref{eq:rate function-J0}) as that of $(\bar{u}^\epsilon)^\vee$.

\begin{theorem}\label{thm:LDP}
Assume that $b$ and $g$ are globally Lipschitz continuous and $\mathfrak{u}\in H_\gamma\cap H_{\gamma_1}$. 
Then under Assumptions \ref{Asp:SG-strong}, \ref{asp:S-S}, and \ref{asp:compact}, for any fixed $\tau_0\in(0,T)$, $\{u^\epsilon_{\psi(\epsilon)}\}_{\epsilon\in(0,1]}$ satisfies the LDP on $\mathcal{C}([\tau_0,T]; H_\gamma )$ as $\epsilon\to0$ with the speed $\epsilon^{-1}$ and the good rate function
\begin{equation*}
J_{\tau_0}(x):=\inf_{\left\{\phi\in L^2(0,T; \mathcal{U}_0 ),\,x(t)=(\bar{Z}^\phi(t))^\vee, \,t\in[\tau_0,T]\right\}}\frac12 \int_0^T\|\phi(s)\|_{\mathcal{U}_0}^2\ud s,\quad x\in\mathcal{C}([\tau_0,T]; H_\gamma ).
\end{equation*}
\end{theorem}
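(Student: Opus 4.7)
The plan is to apply the weak-convergence (Budhiraja-Dupuis) approach directly to the multiscale equation \eqref{eq:intro2} with $\delta=\psi(\epsilon)$, piggybacking on the proof of Theorem \ref{thm:LDP-graph} for goodness of the rate function and weak continuity of the skeleton map, and using Assumptions \ref{Asp:SG-strong}-\ref{asp:S-S} to handle the small-$\delta$ semigroup limit on $[\tau_0,T]$. As a preliminary step, I would observe that the continuous restriction map $\rho:\mathcal{C}([0,T];H_\gamma)\to\mathcal{C}([\tau_0,T];H_\gamma)$ carries the compact sublevel set $K_M^0:=\{(\bar{Z}^\phi)^\vee:\tfrac12\|\phi\|_{L^2(0,T;\mathcal{U}_0)}^2\le M\}$ of $J_0$ (compact in $\mathcal{C}([0,T];H_\gamma)$ by the proof of Theorem \ref{thm:LDP-graph}) onto $J_{\tau_0}^{-1}([0,M])$, which is therefore compact; hence $J_{\tau_0}$ is a good rate function.

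By the Budhiraja-Dupuis variational principle, the LDP then reduces to the following convergence: for every $M>0$ and every family $\{\phi_\epsilon\}$ of $\mathscr{F}_t$-predictable $L^2(0,T;\mathcal{U}_0)$-valued controls with $\tfrac12\|\phi_\epsilon\|_{L^2}^2\le M$ a.s.\ that converges in distribution (in the weak $L^2$-topology) to some $\phi$, the controlled multiscale solution
\begin{align*}
u^{\epsilon,\phi_\epsilon}_{\psi(\epsilon)}(t)=S_{\psi(\epsilon)}(t)\mathfrak{u}&+\int_0^t S_{\psi(\epsilon)}(t-s)B(u^{\epsilon,\phi_\epsilon}_{\psi(\epsilon)}(s))\,\ud s\\
&+\int_0^t S_{\psi(\epsilon)}(t-s)G(u^{\epsilon,\phi_\epsilon}_{\psi(\epsilon)}(s))\phi_\epsilon(s)\,\ud s+\sqrt{\epsilon}\int_0^t S_{\psi(\epsilon)}(t-s)G(u^{\epsilon,\phi_\epsilon}_{\psi(\epsilon)}(s))\,\ud\mathcal{W}(s)
\end{align*}
converges in distribution, in $\mathcal{C}([\tau_0,T];H_\gamma)$, to $(\bar{Z}^\phi)^\vee$. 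Well-posedness and uniform-in-$\epsilon$ moment bounds on $u^{\epsilon,\phi_\epsilon}_{\psi(\epsilon)}$ follow from Assumption \ref{Asp:SG-strong}, \eqref{eq:Glip}-\eqref{eq:Glingr}, and the $L^2$-boundedness of $\phi_\epsilon$ by a standard fixed-point argument.

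To establish the above convergence I would split
\[u^{\epsilon,\phi_\epsilon}_{\psi(\epsilon)}-(\bar{Z}^\phi)^\vee=\bigl(u^{\epsilon,\phi_\epsilon}_{\psi(\epsilon)}-(\bar{Z}^{\phi_\epsilon})^\vee\bigr)+\bigl((\bar{Z}^{\phi_\epsilon})^\vee-(\bar{Z}^\phi)^\vee\bigr).\]
The second difference tends to $0$ in distribution by the weakly continuous dependence of the skeleton map $\phi\mapsto\bar{Z}^\phi$ on compact sublevel sets of $\|\cdot\|_{L^2}^2$, a property already established within the proof of Theorem \ref{thm:LDP-graph}. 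The first difference is a controlled analogue of \eqref{eq:u-u}: the stochastic integral carries the prefactor $\sqrt{\epsilon}$ and is thus $\mathcal{O}(\sqrt{\epsilon})$ in $L^2(\Omega;\bar{H}_\gamma)$; the drift and control terms are handled by replacing $S_{\psi(\epsilon)}(t-s)$ by $\bar{S}(t-s)^\vee$ on $s\in[0,t-\tau_0]$ via Assumption \ref{asp:S-S}, while the contribution over the small time interval $s\in[t-\tau_0,t]$ is bounded uniformly in $\epsilon$ by $C\sqrt{\tau_0}\bigl(1+\sup_{s}\|u^{\epsilon,\phi_\epsilon}_{\psi(\epsilon)}(s)\|_{H_\gamma}\bigr)$ using Assumption \ref{Asp:SG-strong} and Cauchy-Schwarz. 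A Gronwall argument then yields convergence in probability on $[\tau_0,T]$.

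The main obstacle will be uniformizing Assumption \ref{asp:S-S} with respect to the random integrand $G(u^{\epsilon,\phi_\epsilon}_{\psi(\epsilon)}(s))\phi_\epsilon(s)$, since the assumption only provides the convergence $S_{\psi(\epsilon)}(t)\varphi\to\bar{S}(t)^\vee\varphi$ for a fixed $\varphi\in H_\gamma$. I would address this by combining tightness of $\{u^{\epsilon,\phi_\epsilon}_{\psi(\epsilon)}\}$ in $\mathcal{C}([\tau_0,T];H_\gamma)$ (descending from the compactness of the level sets of $J_0$ established above) with a finite covering of the resulting compact set of admissible integrands on which Assumption \ref{asp:S-S} can be applied uniformly, together with the contractive property \eqref{fvarphi} and the uniform $L^2$-bound on $\phi_\epsilon$. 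This careful interchange of the limits $\epsilon\to 0$ and $\psi(\epsilon)\to 0$ under weakly converging random controls is the core technical step, paralleling the treatment in \cite{CH24}.
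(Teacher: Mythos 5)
Your overall strategy mirrors the paper's: apply the weak-convergence criterion (Proposition \ref{prop:criterionMSZ}, here with \textsl{Condition A}) after restricting to $[\tau_0,T]$, obtain goodness of $J_{\tau_0}$ by pushing forward the compact set $\{(\bar{Z}^\phi)^\vee : \phi\in S_N\}$ under the (continuous) restriction map, and verify the controlled-equation convergence via Skorohod representation. The goodness-of-rate-function paragraph is correct and matches the paper's verification of the compactness condition.

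The divergence — and the gap — is in the decomposition used for the controlled convergence. You split
\[u^{\epsilon,\phi_\epsilon}_{\psi(\epsilon)}-(\bar{Z}^\phi)^\vee=\bigl(u^{\epsilon,\phi_\epsilon}_{\psi(\epsilon)}-(\bar{Z}^{\phi_\epsilon})^\vee\bigr)+\bigl((\bar{Z}^{\phi_\epsilon})^\vee-(\bar{Z}^\phi)^\vee\bigr),\]
which forces the semigroup comparison $S_{\psi(\epsilon)}$ versus $\bar{S}^\vee$ to act on $\epsilon$-dependent integrands (e.g.\ $B(\bar{Z}^{\phi_\epsilon}(s)^\vee)$ and $G(\bar{Z}^{\phi_\epsilon}(s)^\vee)\phi_\epsilon(s)$). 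Since Assumption \ref{asp:S-S} is pointwise in $\varphi$, this requires uniformizing the convergence over a moving family, which you recognize as the main obstacle and propose to handle by a finite-covering argument on a compact set of integrands. Two problems: (a) the compactness you invoke is mis-attributed — the level sets of $J_0$ consist of deterministic paths $\bar{Z}^\phi$, not realizations of the stochastic processes $u^{\epsilon,\phi_\epsilon}_{\psi(\epsilon)}$, so tightness of the latter does not follow by "descending" from those level sets; the compactness you actually need is that $\{\bar{Z}^{\phi_\epsilon}\}$ lies a.s.\ in a fixed compact subset of $\mathcal{C}([0,T];\bar{H}_\gamma)$, which follows from continuity of the skeleton map on the compact set $S_N$ (Lemma \ref{lem:compact}), not from rate-function level sets — and then one must transport this through $B$ and the basis decomposition of $G$ to get a compact set in $H_\gamma$; and (b) even with that compactness, the covering argument needs equicontinuity of $\{S_{\psi(\epsilon)}(r)-\bar{S}(r)^\vee\}_{r,\epsilon}$ (available from Assumption \ref{Asp:SG-strong} and \eqref{Stbound}) to be made explicit, and is nontrivially entangled with the random control $\phi_\epsilon$ in the Hilbert–Schmidt term.

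The paper avoids this entirely by a two-step decomposition through an auxiliary process $w_\epsilon^{v^\epsilon}$ solving \eqref{eq.wcontrolled-eq} (which keeps $S_{\psi(\epsilon)}$ and $v^\epsilon$ but drops the noise): then $u^{\epsilon,v^\epsilon}_{\psi(\epsilon)}-w_\epsilon^{v^\epsilon}$ involves only the $\sqrt{\epsilon}$ stochastic integral (Lemma \ref{lem:w-u}), while in $w_\epsilon^{v^\epsilon}-(\bar{Z}^v)^\vee$ (Lemma \ref{lem:w-Z}) the Lipschitz differences are absorbed by Gronwall, and the semigroup differences $I_{\epsilon,2,2}$, $I_{\epsilon,3,2}$ are arranged to act only on $B(\bar{Z}^v(s)^\vee)$ and $G(\bar{Z}^v(s)^\vee)e_i$ — quantities that are fixed (independent of $\epsilon$ once $v$ is fixed a.s.\ by Skorohod), so the bounded convergence theorem applies directly, with a tail truncation in $i$ and a $\tau_1\to 0^+$ limit handling the short-time contribution; the control difference $I_{\epsilon,3,3}$ is then handled by the compactness of $\bar{\Xi}$ (Lemma \ref{lemma:finite}(2)) and weak convergence of $v^\epsilon$. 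Your route could in principle be completed, but it requires substantially more care than sketched and introduces a covering argument the paper does not need; more importantly, the step you gesture at (tightness via level sets of $J_0$) is not the correct justification.
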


Next, we impose \eqref{eq:MDPpara} and further investigate
the MDP of \eqref{eq:intro}.
If $b$ is continuously differentiable, 
then $B$ is also Fr\'echet differentiable on $\bar{H}_\gamma$. The Fr\'echet derivative ${\mathcal{D}B}:\bar{H}_\gamma\to \mathscr{L}(\bar{H}_\gamma)$ of $B$ is given by \begin{equation}\label{eq:B}
({\mathcal{D}B}(v_1)v_2)(z,k)=b^\prime(v_1(z,k))v_2(z,k),\quad v_1,v_2\in \bar{H}_\gamma.
\end{equation}
Formally, as $\epsilon$ tends to $0$, the process $\bar{X}^\epsilon$ {defined in \eqref{eq:X}} converges to the solution $\bar{X}^0$ to the following deterministic equation
 \begin{equation*}
	\partial_t \bar{X}^0(t)=\bar{L}\bar{X}^0(t)+{\mathcal{D}B}(\bar{u}^0(t))\bar{X}^0(t),\quad \bar{X}^0(0)=0.
\end{equation*}

For each $\phi\in L^2(0,T;\mathcal{U}_0)$, let $\bar{R}^{\phi}$ be the unique solution to the following skeleton equation
 \begin{equation} \label{eq:Nskeleton}
\left\{
\begin{split}
	\partial_t \bar{R}^\phi(t)&=\bar{L}\bar{R}^\phi(t)+{\mathcal{D}B}(\bar{u}^0(t))\bar{R}^\phi(t)+G(\bar{u}^0(t))\phi(t)^\wedge,\quad t\in(0,T],\\
	\bar{R}^\phi(0)&=0.
	\end{split}
	\right.
\end{equation}
The existence and uniqueness of the solution to \eqref{eq:Nskeleton} follow by arguments analogous to those for \eqref{eq.skeleton-eq} and are omitted for brevity. Our last main result is the MDP of \eqref{eq:intro}, which concerns the deviation of $\bar{X}^\epsilon$ from its typical value $\bar{X}^0$.
 \begin{theorem}\label{thm:MDP-graph}
Assume that $b$ is continuously differentiable with $b^\prime$ being $\alpha_0$th H\"older continuous with some $\alpha_0\in(0,1]$, and $g$ is globally Lipschitz continuous.
 Let $\mathfrak{u}\in L^{2\alpha_0+2}(\R^2,\gamma^\vee\ud x)\cap H_{\gamma_1}$,
 Assumptions \ref{asp:gamma} and \ref{asp:compact}, and \eqref{eq:MDPpara} hold. Then $\{(\bar{X}^\epsilon)^\vee\}_{\epsilon\in(0,1]}$ satisfies the LDP on $\mathcal{C}([0,T]; H_\gamma )$ as $\epsilon\to0$ with the speed $\lambda^2(\epsilon)$ and the good rate function
\begin{equation*}
J(x):=\inf_{\left\{\phi\in L^2(0,T; \mathcal{U}_0 ),\,x=(\bar{R}^\phi(\cdot))^\vee\right\}}\frac12 \int_0^T\|\phi(s)\|_{\mathcal{U}_0}^2\ud s,\quad x\in\mathcal{C}([0,T]; H_\gamma ).
\end{equation*}
\end{theorem}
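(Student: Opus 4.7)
The plan is to establish the MDP via the weak convergence (Budhiraja--Dupuis) framework applied to the rescaled family $\{\bar X^\epsilon\}_{\epsilon\in(0,1]}$, which reduces the LDP with speed $\lambda^2(\epsilon)$ and rate function $J$ to two standard conditions: (a) the good rate function property of $J$, and (b) convergence in distribution of suitably controlled rescaled processes to the skeleton solution.

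For (a), set $S_M:=\{\phi\in L^2(0,T;\mathcal U_0):\tfrac12\|\phi\|_{L^2}^2\le M\}$. Using \eqref{fvarphi} it suffices to prove that $\{\bar R^\phi:\phi\in S_M\}$ is precompact in $\mathcal{C}([0,T];\bar H_\gamma)$. From the mild form of \eqref{eq:Nskeleton}, Lemma \ref{lem:St} (applied with $\gamma_1$ in place of $\gamma$, legitimized by Assumption \ref{asp:compact}(i)), the boundedness of $b'$ implied by global Lipschitz continuity of $b$, the linear growth of $G$, and an a priori bound on $\bar u^0$, I would derive
\begin{equation*}
\sup_{\phi\in S_M}\left(\sup_{t\in[0,T]}\|\bar R^\phi(t)\|_{\bar{\mathbb{W}}^{1,2}_{\gamma_1}}^2+\int_0^T\|\partial_t\bar R^\phi(t)\|_{\bar H_{\gamma_1}}^2\ud t\right)\le M',
\end{equation*}
so that $\{\bar R^\phi\}\subset\Lambda_{M',\gamma_1}$, and then invoke Assumption \ref{asp:compact}(ii).

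For (b), let $\{\phi_\epsilon\}$ be a family of $S_M$-valued predictable controls converging in distribution (weak topology on $L^2(0,T;\mathcal U_0)$) to some $\phi$. Girsanov's theorem yields the controlled SPDE for $\bar u^{\epsilon,\phi_\epsilon}$, and writing $\bar X^{\epsilon,\phi_\epsilon}:=(\bar u^{\epsilon,\phi_\epsilon}-\bar u^0)/(\sqrt{\epsilon}\lambda(\epsilon))$, the mean-value representation of $B$ yields
\begin{equation*}
\bar X^{\epsilon,\phi_\epsilon}(t)=\int_0^t\!\bar S(t-s)\Big(\!\int_0^1\!\mathcal{D}B\bigl(\bar u^0+\theta\sqrt{\epsilon}\lambda(\epsilon)\bar X^{\epsilon,\phi_\epsilon}\bigr)\bar X^{\epsilon,\phi_\epsilon}\ud\theta\Big)\ud s+\int_0^t\!\bar S(t-s)G(\bar u^{\epsilon,\phi_\epsilon})\phi_\epsilon^\wedge\ud s+\frac{1}{\lambda(\epsilon)}\!\int_0^t\!\bar S(t-s)G(\bar u^{\epsilon,\phi_\epsilon})\ud\bar{\mathcal W}.
\end{equation*}
I would then (i) obtain uniform moment bounds on $\bar u^{\epsilon,\phi_\epsilon}$, and consequently on $\bar X^{\epsilon,\phi_\epsilon}$ via \eqref{eq:MDPpara}, in $\mathcal{C}([0,T];L^{2\alpha_0+2}(\Gamma,\ud\nu_\gamma))$ using Lemma \ref{lem:St} with $q=2\alpha_0+2$ together with a weighted stochastic factorization argument; (ii) derive uniform Sobolev-type bounds placing $\bar X^{\epsilon,\phi_\epsilon}$ in a $\Lambda_{M'',\gamma_1}$-type set, so that Assumption \ref{asp:compact}(ii) gives tightness in $\mathcal{C}([0,T];\bar H_\gamma)$; (iii) identify any subsequential limit as $\bar R^\phi$ by sending $\epsilon\to 0$ in the mild formulation, noting that the stochastic convolution vanishes because $1/\lambda(\epsilon)\to 0$ and that the control term converges by the assumed weak convergence of $\phi_\epsilon$.

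The main obstacle, and the step in which every hypothesis of the theorem is used, is controlling the Taylor remainder $R^\epsilon:=\int_0^1[\mathcal{D}B(\bar u^0+\theta\sqrt{\epsilon}\lambda(\epsilon)\bar X^{\epsilon,\phi_\epsilon})-\mathcal{D}B(\bar u^0)]\bar X^{\epsilon,\phi_\epsilon}\ud\theta$. Using the $\alpha_0$-Hölder continuity of $b'$ and Hölder's inequality with exponents $((\alpha_0+1)/\alpha_0,\alpha_0+1)$, one obtains
\begin{equation*}
\|R^\epsilon\|_{\bar H_\gamma}\le C\,(\sqrt{\epsilon}\lambda(\epsilon))^{\alpha_0}\|\bar X^{\epsilon,\phi_\epsilon}\|_{L^{2\alpha_0+2}(\Gamma,\ud\nu_\gamma)}^{\alpha_0+1},
\end{equation*}
so the argument only goes through if $\bar X^{\epsilon,\phi_\epsilon}$ admits uniform-in-$\epsilon$ bounds in the weighted $L^{2\alpha_0+2}$-space. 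This is the role of the initial-data hypothesis $\mathfrak u\in L^{2\alpha_0+2}(\R^2,\gamma^\vee\ud x)$ (which, via \eqref{fvarphi}, gives $\mathfrak u^\wedge\in L^{2\alpha_0+2}(\Gamma,\ud\nu_\gamma)$) combined with Lemma \ref{lem:St} for $q=2\alpha_0+2$. Establishing these uniform weighted $L^p$ estimates for the controlled family, especially for the stochastic convolution in the non-uniformly elliptic graph setting, is the heaviest technical step; once it is in hand, the Budhiraja--Dupuis variational representation delivers the announced LDP of $\{(\bar X^\epsilon)^\vee\}$ on $\mathcal{C}([0,T];H_\gamma)$ with speed $\lambda^2(\epsilon)$ and good rate function $J$.
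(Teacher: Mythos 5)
Your overall strategy — the weak convergence (Budhiraja--Dupuis) framework — is the right one, and you have correctly identified that the Hölder remainder of $\mathcal{D}B$ is where every hypothesis must be used. However, the paper implements the framework via \textsl{Condition A$^\prime$} of Proposition \ref{prop:criterionMSZ} rather than the classical \textsl{Condition A} you outline, and this choice matters substantially for the hardest step.

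The paper verifies \textbf{(A5)} (weak-to-strong continuity of $\phi\mapsto\bar R^\phi$, established in Lemma \ref{lem:compact} via compactness of the convolution operator $\bar\Xi$ from $L^2(0,T;H_{\gamma_1})$ to $\mathcal C([0,T];\bar H_\gamma)$, Lemma \ref{lemma:finite}(2)) and \textbf{(A6)} (probability convergence of $\bar{\mathfrak M}^{v^\epsilon}_\epsilon-\bar R^{v^\epsilon}$ to zero, which is Lemma \ref{lem:M-Z}). Neither verification requires tightness of the controlled \emph{stochastic} process nor its uniform placement in a $\Lambda_{M'',\gamma_1}$-type set, so your item (ii) is unnecessary in the paper's route and would be quite hard in the degenerate graph setting.

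More importantly, the way you handle the remainder term is genuinely different from, and harder than, the paper's. You write
\begin{equation*}
R^\epsilon=\int_0^1\bigl[\mathcal{D}B(\bar u^0+\theta\sqrt{\epsilon}\lambda(\epsilon)\bar X^{\epsilon,\phi_\epsilon})-\mathcal{D}B(\bar u^0)\bigr]\bar X^{\epsilon,\phi_\epsilon}\,\ud\theta
\end{equation*}
and bound $\|R^\epsilon\|_{\bar H_\gamma}\le C(\sqrt{\epsilon}\lambda(\epsilon))^{\alpha_0}\|\bar X^{\epsilon,\phi_\epsilon}\|^{\alpha_0+1}_{L^{2\alpha_0+2}}$, which requires uniform-in-$\epsilon$ moment bounds of the stochastic controlled family $\bar X^{\epsilon,\phi_\epsilon}$ in $L^{2\alpha_0+2}(\Gamma,\ud\nu_\gamma)$. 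You acknowledge this as ``the heaviest technical step'' and do not carry it out; note also that such bounds are not simply ``consequent'' on bounds for $\bar u^{\epsilon,\phi_\epsilon}$, since dividing by $\sqrt{\epsilon}\lambda(\epsilon)$ destroys uniformity and one must estimate $\bar X^{\epsilon,\phi_\epsilon}$ directly through its own equation (including the stochastic convolution with the $\lambda(\epsilon)^{-1}$ prefactor) in the weighted $L^{2\alpha_0+2}$ topology. The paper circumvents this entirely with a sharper decomposition in Lemma \ref{lem:M-Z}: writing
\begin{align*}
&\frac{B(\bar u^0+\sqrt{\epsilon}\lambda(\epsilon)\bar{\mathfrak M}^{v^\epsilon}_\epsilon)-B(\bar u^0)}{\sqrt{\epsilon}\lambda(\epsilon)}-\mathcal{D}B(\bar u^0)\bar R^{v^\epsilon}\\
&\quad=\frac{B(\bar u^0+\sqrt{\epsilon}\lambda(\epsilon)\bar{\mathfrak M}^{v^\epsilon}_\epsilon)-B(\bar u^0+\sqrt{\epsilon}\lambda(\epsilon)\bar R^{v^\epsilon})}{\sqrt{\epsilon}\lambda(\epsilon)}
+\left(\frac{B(\bar u^0+\sqrt{\epsilon}\lambda(\epsilon)\bar R^{v^\epsilon})-B(\bar u^0)}{\sqrt{\epsilon}\lambda(\epsilon)}-\mathcal{D}B(\bar u^0)\bar R^{v^\epsilon}\right),
\end{align*}
the first piece is controlled by the global Lipschitz constant of $B$ times $\|\bar{\mathfrak M}^{v^\epsilon}_\epsilon-\bar R^{v^\epsilon}\|_{\bar H_\gamma}$ (which is Gronwall-compatible), while the Hölder remainder in the second piece involves only $\bar R^{v^\epsilon}$. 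Since $\bar R^{v^\epsilon}$ solves a \emph{linear deterministic} skeleton equation, Lemma \ref{lem:Rphibound} gives a pathwise, uniform bound $\sup_t\|\bar R^{v^\epsilon}(t)\|_{L^{2\alpha_0+2}(\Gamma,\ud\nu_\gamma)}\le C(1+\|\mathfrak u\|_{L^{2\alpha_0+2}})$ valid $\mathbb P$-a.s.\ for $v^\epsilon\in\mathcal A_N$, using Lemma \ref{lem:St} with $q=2\alpha_0+2$. The only moment bound needed for the stochastic process $\bar{\mathfrak M}^{v^\epsilon}_\epsilon$ is in $\bar H_\gamma$ (Lemma \ref{lem:Mv}), not in $L^{2\alpha_0+2}$. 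Your plan is not incorrect in principle, but if you pursue it you will have to prove the weighted $L^{2\alpha_0+2}$-estimates for the stochastic convolution on the graph which the paper deliberately avoids; you should either carry out those estimates or adopt the paper's decomposition.
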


Regarding the MDP of the rescaled process $\{u_{\psi(\epsilon)}^\epsilon\}_{\epsilon\in(0,1]}$, it is equivalent to studying the large deviation principle (LDP) for 
\begin{equation*}
U_\epsilon(t,x):=\frac{u_{\psi(\epsilon)}^\epsilon(t,x)-(\bar{u}^0)^\vee(t,x)}{\sqrt{\epsilon}\lambda(\epsilon)}=\frac{u_{\psi(\epsilon)}^\epsilon(t,x)-(\bar{u}^\epsilon)^\vee(t,x)}{\sqrt{\epsilon}\lambda(\epsilon)}+(\bar{X}^\epsilon)^\vee(t,x).
\end{equation*}
To determine the limit of $U_\epsilon$,
it is necessary to analyze the limit of $\frac{u_{\psi(\epsilon)}^\epsilon(t,x)-(\bar{u}^\epsilon)^\vee(t,x)}{\sqrt{\epsilon}\lambda(\epsilon)}.$
However, such an analysis would require establishing the strong convergence rate of $u_{\delta}^\epsilon$ to $\bar{u}^\epsilon$ with respect to $\delta$, which is beyond the scope of this work and is left for future investigation.
\section{Stochastic reaction diffusion equation on narrow domain}\label{S:ND}
In this section, we study the moderate and large deviation principles of the SPDE on graph introduced in \cite{CF17}. This model describes the asymptotics of stochastic reaction-diffusion equations in a bounded narrow domain $D_\delta:=\{(z,x_2)\in \R^2:(z,x_2/\delta)\in D\}$. 
Reaction-diffusion equations on narrow domains, with or without stochastic perturbations, arise in various contexts, such as models for the movement of molecular motors. In particular, one possible approach to modeling Brownian motors or ratchets is to conceptualize them as entities traversing a prescribed pathway. This pathway, along which the molecule or particle moves, can be interpreted as a tubular domain with multiple branches.

\subsection{Diffusion process on bounded domain}\label{S:NDX}
For each $\delta>0,$ consider the stochastic system \eqref{eq:X-ND} in Example \ref{Ex:1} with reflecting boundary conditions on 
a bounded open domain $D\subset\R^2$ with a smooth boundary $\partial D$.
Assume that $D$ fulfills the uniform exterior sphere condition (cf. \cite[Definition 1.1.6]{KKL01}) and the following hypothesis.
\begin{hypothesis}\label{hyp:3.1}
\begin{enumerate}
\item[$(D_1)$] There are only finitely many $z\in\R$ such that $\textup{\textbf{n}}_2(x)=0$ and $x=(z,x_2)\in \partial D$;
\item[$(D_2)$] For each $z\in \R$, the cross-section $C(z):=\{(z,x_2)\in D\}$ consists of a finite union of intervals. Namely, if $C(z)\neq \emptyset$, then there exists $N(z)\in \mathbb{N}$ and intervals $C_1(z), C_2(z),\ldots, C_{N(z)}(z)$ such that
$C(z)=\cup_{k=1}^{N(z)}C_k(z)$.
\item[$(D_3)$] If $z\in \R$ is such that $\textup{\textbf{n}}_2(z,x_2)\neq 0$, then $l_k(z):=|C_k(z)|>0$ for all $k=1,\ldots,N(z)$, where $|C_k(z)|$ denotes the length of $C_k(z)$.
\item[$(D_4)$] For each $z\in \R$ with $\textup{\textbf{n}}_2(z,x_2)= 0$, either $\textup{\textbf{n}}_1(z,x_2)>0$ for all $(z,x_2)\in D$ or $\textup{\textbf{n}}_1(z,x_2)<0$ for all $(z,x_2)\in D$.
\end{enumerate}
\end{hypothesis}
The slow motion $\mathcal{H}(X_\delta)$ of the process $X_\delta$ in \eqref{eq:X-ND} is its first component, i.e.,
\begin{equation}\label{eq:H-ND}
\mathcal{H}(x)=x_1,\quad x=(x_1,x_2)\in D.
\end{equation}
 In this setting, the infinitesimal generator of \eqref{eq:X-ND} is given by
 \begin{equation}\label{eq:L-deltaND}
 L_\delta =\frac12\textup{Tr}(\sigma_\delta\nabla^2)=\frac{1}{2}\frac{\partial^2}{\partial x_1^2}+\frac{1}{2\delta^2}\frac{\partial^2}{\partial x_2^2}
 \end{equation}
 equipped with the Neumann boundary conditions
$\nabla \varphi\cdot\sigma_\delta \textup{\textbf{n}}=0$ on $\partial D$ for $\varphi\in \textup{Dom}(L_\delta)$.

\begin{figure}[!htb]\label{fig:fish}
	\centering
	\includegraphics[width=0.5\linewidth]{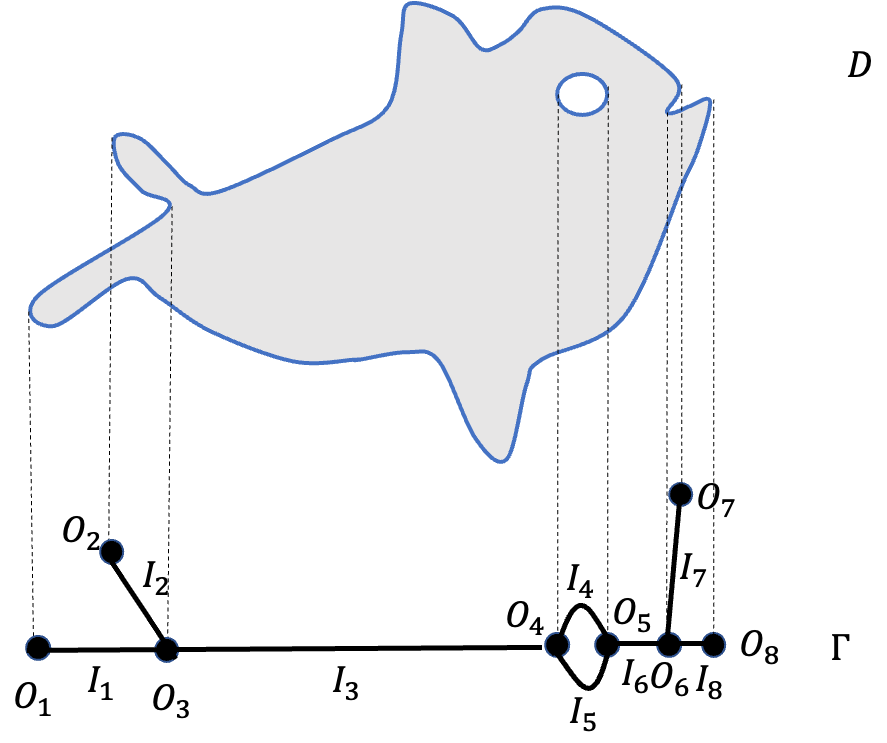}
	\caption{One example for the domain $D$ and the corresponding graph $\Gamma$ in section \ref{S:ND}.}\label{fig:fish}
\end{figure}

The graph $\Gamma$ is constructed by identifying the points of each connected component $C_k(z)$ of each cross-section $C(z)$. Each vertex corresponds to a connected component containing points $(z,x_2)\in \partial D$ with $\textup{\textbf{n}}_2(z,x_2)= 0$ (see Fig.\ \ref{fig:fish} for example).
It has been proven in \cite{FW12-PTRF} that for any initial value $x_0\in D$, the projected process $\Pi(X_\delta(\cdot))=(\mathcal{H}(X_\delta(\cdot)),k(X_\delta(\cdot)))$ with \eqref{eq:X-ND} and \eqref{eq:H-ND} converges in the sense of weak convergence of distributions in $\mathcal{C}([0,T];\Gamma)$ to a Markov process $\bar{Y}$ on $\Gamma$, corresponding to the infinitesimal generator \eqref{eq:barL} with
$\alpha_k(z)=T_k(z)=l_k(z).$
\subsection{Narrow domain asymptotics}
We take $\gamma(z,k)=1$ for all $(z,k)\in\Gamma$, under which
$ H_\gamma =L^2(D)$ and \begin{equation*}
\bar{H}_\gamma:=\left\{f: \Gamma \rightarrow \mathbb{R}: \sum_{k=1}^m \int_{I_k}|f(z, k)|^2 l_k(z) \ud z<\infty\right\}.
\end{equation*}
Concerning $\mathcal W$ in \eqref{eq:intro}, assume that $\{\mathfrak{e}_j\}_{j=1}^\infty$ is an orthonormal basis of $L^2(D)$ and 
\begin{equation}\label{eq:W-ND}
\mathcal W(t,x)=\sum_{j=1}^\infty Q\mathfrak{e}_j\beta_j(t),\quad t \geq 0,x\in D,
\end{equation}
where $Q\in\mathcal{L}(L^2(D))$ satisfies that $Q\mathfrak{e}_j=\mathfrak{q}_j\mathfrak{e}_j$ for a sequence $\{\mathfrak{q}_j\}_{j=1}^\infty\subset[0,\infty)$ with $\sum_{j=1}^\infty\mathfrak{q}_j^2<\infty$.
 Then $\mathcal{W}$ is a $\mathcal{U}_0$-cylindrical Wiener process with $\mathcal{U}_0:=Q(L^2(D))$ endowed with the norm $\|\cdot\|_{\mathcal{U}_0}:=\|Q^{-1}\cdot\|_{L^2(D)}$, where $Q^{-1}$ is the pseudo inverse of $Q$ in the case that $Q$ is not one-to-one.
The sequence $\{Q\mathfrak{e}_j;\mathfrak{q}_j>0\}$ forms an orthonormal basis of $\mathcal{U}_0$. To ensure \eqref{eq:ei} with $\{e_i,i\in \mathbb{N}^+\}=\{Q\mathfrak{e}_j;\mathfrak{q}_j>0\}$, we assume that
\begin{equation}\label{eq:qj}
\sup_{x\in D}\sum_{j:\mathfrak{q}_j>0}\mathfrak{q}_j^2|\mathfrak{e}_j(x)|^2<\infty.
\end{equation}
If $\{(\mathfrak{c}_j,\mathfrak{e}_j)\}_{j=1}^\infty$ forms an eigenpair of the negative {Neumann} Laplacian on $D$, i.e., $-\Delta \mathfrak{e}_j=\mathfrak{c}_j \mathfrak{e}_j$ with {$\nabla \mathfrak{e}_j\cdot\textup{\textbf{n}}=0$} on $\partial D$, then by the Sobolev embedding $W^{1+\epsilon_0,2}(D)\hookrightarrow L^\infty(D)$ for any $\epsilon_0>0$, a sufficient condition for \eqref{eq:qj} is
$\sum_{j:\mathfrak{q}_j>0}\mathfrak{q}_j^2\mathfrak{c}_j^{1+\epsilon_0}<\infty.
$ 
Furthermore, since
$\mathfrak{c}_j\sim j$ as $j\to\infty$, we can choose $\mathfrak{q}_j\sim j^{-(1+\epsilon_0)}$ to ensure the convergence of the series above.

We are now ready to verify Assumption \ref{asp:compact} with $\gamma_1=\gamma=1$.
\begin{lemma}\label{lem:compactND}
Let $\gamma\equiv1$, and $\alpha_k=T_k=l_k$ for $k=1,\ldots,m$. Then
 $\Lambda_{M,\gamma}$ is a pre-compact set of $\mathcal{C}([0,T];\bar{H}_{\gamma})$. 
\end{lemma}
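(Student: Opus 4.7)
The plan is to invoke the Aubin--Lions-type criterion of Remark~\ref{rem:Hgamma}: it reduces pre-compactness of $\Lambda_{M,\gamma}$ in $\mathcal{C}([0,T];\bar{H}_\gamma)$ to the compact embedding $\bar{\mathbb{W}}^{1,2}_\gamma\hookrightarrow\bar{H}_\gamma$. The definition of $\Lambda_{M,\gamma}$ supplies both a uniform bound in $L^\infty(0,T;\bar{\mathbb{W}}^{1,2}_\gamma)$ and the $L^2$-in-time bound on $\partial_t\bar{\Phi}$; Cauchy--Schwarz turns the latter into the uniform $\tfrac12$-H\"older estimate $\|\bar{\Phi}(t)-\bar{\Phi}(s)\|_{\bar{H}_\gamma}\le\sqrt{M}\,|t-s|^{1/2}$, and combined with pointwise relative compactness coming from the compact embedding, Arzel\`a--Ascoli will close the argument.

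To establish the compact embedding, since $\Gamma$ consists of finitely many edges $I_k=(a_k,b_k)$, I would argue edge-by-edge: it suffices to show that every sequence $\{f_n\}$ bounded in the norm $\int_{I_k}(|f|^2+|f'|^2) l_k\,\ud z$ admits a subsequence converging in $L^2(I_k,l_k\,\ud z)$. Hypothesis~\ref{hyp:3.1}$(D_3)$ guarantees that the continuous weight $l_k$ is strictly positive on every compact subinterval $[a_k+\delta,b_k-\delta]$, so on such a subinterval the weighted norm dominates the usual $H^1$-norm and the classical Rellich--Kondrachov theorem yields a subsequence converging in $L^2([a_k+\delta,b_k-\delta])$; since $l_k$ is uniformly bounded above on the compact graph, this convergence also takes place in $L^2([a_k+\delta,b_k-\delta],l_k\,\ud z)$. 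A standard diagonal extraction in $\delta\to 0$ then produces a single subsequence converging in $L^2(J,l_k\,\ud z)$ on every compact subinterval $J$ of each $I_k$.

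The remaining, and genuinely delicate, step is to verify that the boundary tails $\int_{a_k}^{a_k+\delta}|f_n|^2 l_k\,\ud z$ and the symmetric integrals near $b_k$ tend to zero uniformly in $n$ as $\delta\to 0$. This is where the geometry of $\partial D$ and Hypothesis~\ref{hyp:3.1}$(D_4)$ enter: at a vertex of $\Gamma$ the smooth boundary $\partial D$ has a vertical tangent, so a local Taylor expansion yields $l_k(z)\sim c\,|z-z_0|^{1/2}$ near such a vertex $z_0$, which is in particular a Muckenhoupt $A_2$-weight. Consequently $1/l_k$ is integrable near the endpoints, and the Cauchy--Schwarz estimate
\[
|f_n(z)-f_n(y_0)|\le \Bigl(\int_{y_0}^{z}|f_n'|^2 l_k\,\ud r\Bigr)^{1/2}\Bigl(\int_{y_0}^z\frac{\ud r}{l_k(r)}\Bigr)^{1/2},
\]
for a fixed reference point $y_0$ in the interior of $I_k$, produces a uniform-in-$n$ $L^\infty$ bound of $f_n$ in a neighborhood of the vertex; here $|f_n(y_0)|$ is itself controlled by a weighted mean of $|f_n|^2$ over a fixed compact subinterval where $l_k$ is bounded below. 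Since $\int_{a_k}^{a_k+\delta}l_k\,\ud z\to 0$ as $\delta\to 0$, uniform tail smallness follows. Piecing the interior convergence together with this tail control delivers the desired subsequence converging in $\bar{H}_\gamma$. I expect the main obstacle to lie precisely in this tail analysis, since it crucially depends on the mild polynomial degeneracy of $l_k$ at the vertices forced by the smoothness of $\partial D$ and the geometric restriction in Hypothesis~\ref{hyp:3.1}.
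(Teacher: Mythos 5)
Your proposal is correct and would prove the lemma, but it takes a genuinely different and noticeably more involved route than the paper. The paper's own proof is a short pullback argument: since $\mathcal{H}(x)=x_1$ gives $|\nabla\mathcal{H}|\equiv 1$ and $\ud x=\ud l_{z,k}\,\ud z$, it observes the isometry $\|\nabla f^\vee\|_{L^2(D)}=\|f'\|_{\bar{H}_\gamma}$ alongside the identity $\|f^\vee\|_{L^2(D)}=\|f\|_{\bar{H}_\gamma}$ from \eqref{fvarphi}. A sequence in $\Lambda_{M,\gamma}$ then pulls back, via $\bar{\Phi}\mapsto\bar{\Phi}^\vee$, to a sequence bounded in $L^\infty(0,T;W^{1,2}(D))$ with time derivatives bounded in $L^2(0,T;L^2(D))$, and the classical Aubin--Lions lemma on the \emph{bounded smooth two-dimensional domain} $D$ immediately yields a subsequence convergent in $\mathcal{C}([0,T];L^2(D))$, which translates back to convergence in $\mathcal{C}([0,T];\bar{H}_\gamma)$. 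The degeneracy of the weight $l_k$ at vertices is thereby bypassed entirely — the standard compact embedding $W^{1,2}(D)\hookrightarrow L^2(D)$ does all the work. Your proof stays on the graph and establishes the compact embedding $\bar{\mathbb{W}}^{1,2}_\gamma\hookrightarrow\bar{H}_\gamma$ edge by edge, which is precisely the more delicate line of reasoning the paper reserves for the genuinely degenerate unbounded-graph case in Section~\ref{S:FA} (Lemma~\ref{lem:compactFA}). One piece of your local analysis is stated slightly too rigidly: the asymptotic $l_k(z)\sim c\,|z-z_0|^{1/2}$ holds only at an endpoint-type vertex where $\partial D$ has nondegenerate second-order tangency; at an interior (bifurcation) vertex as in Fig.~\ref{fig:fish} the weight $l_k$ does not vanish at all, and at an endpoint with higher-order tangency one gets $l_k(z)\sim c\,|z-z_0|^{1/m}$ or even slower decay. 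The argument survives nonetheless, because the only consequence you actually use is local integrability of $1/l_k$, and this holds in all these cases (the flatter the tangency, the slower $l_k$ decays and the milder the singularity of $1/l_k$); it rests not on the exterior sphere condition, which bounds $l_k$ from \emph{above}, but merely on $\partial D$ being $C^1$, so that the decay of $l_k$ can never be as fast as linear. In short: your route works, but it is heavier machinery for what the special structure $\alpha_k=T_k=l_k$, $\gamma\equiv1$ turns into a clean Euclidean Aubin--Lions application.
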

\begin{proof}
By the chain rule and $\ud x=\frac{1}{|\nabla\mathcal{H}(x)|}\ud l_{z,k}\ud z$, we obtain
\begin{align}\label{eq:Dfvarphi}
\|\nabla f^\vee\|_{H_\gamma}^2
&=\sum_{k=1}^m\int_{I_k}\oint_{C_k(z)}|\nabla f(\mathcal{H}(x),k)|^2\frac{1}{|\nabla \mathcal{H}(x)|}\ud l_{z,k}\ud z\\\notag
&=\sum_{k=1}^m\int_{I_k}\oint_{C_k(z)}|f^\prime(\mathcal{H}(x),k)|^2\ud l_{z,k}\ud z\\\notag
&
=\sum_{k=1}^m\int_{I_k}|f^\prime(z,k)|^2 l_k(z)\ud z=
\| f^\prime\|_{\bar{H}_\gamma}^2.
\end{align}

Take a sequence $\{\bar\Phi_l\}_{l=1}^\infty\subset \Lambda_{M,\gamma}$, where $\Lambda_{M,\gamma}$ is defined in \eqref{eq:LambdaM} with $\alpha_k=T_k=l_k$ and $\gamma\equiv1$.
It follows from \eqref{fvarphi} and \eqref{eq:Dfvarphi} that for any $l\in\mathbb{N}^+$,
\begin{align*}
\sup_{t\in[0,T]}\|\bar\Phi_l(t)^\vee\|_{ H_\gamma }+
\sup_{t\in[0,T]}\|\nabla\bar\Phi_l(t)^\vee \|_{ H_\gamma }
+\int_0^T\|\partial_t\bar{\Phi}_l(t)^\vee\|^2_{ H_\gamma }\ud t\le M,
\end{align*}
where $H_\gamma =L^2(D)$.
By the Aubin--Lions lemma, the space
$\{u\in L^\infty(0,T;W^{1,2}(D))| \frac{\ud }{\ud t}u\in L^2(0,T;L^2(D))\}$ is compactly embedded into the space $\mathcal{C}([0,T];L^2(D))$, which implies that there is a subsequence $\{\bar\Phi_{l_i}\}_{i=1}^\infty$ of $\{\bar\Phi_l\}_{l=1}^\infty$ such that
$\{(\bar\Phi_{l_i})^\vee\}_{i=1}^\infty$ converges strongly in $\mathcal{C}([0,T];L^2(D))$, or equivalently $\{\bar\Phi_{l_i}\}_{i=1}^\infty$ converges strongly in $\mathcal{C}([0,T];\bar{H}_\gamma)$. This proves that $\Lambda_{M,\gamma}$ is a pre-compact set of $\mathcal{C}([0,T];\bar{H}_\gamma)$.
\end{proof}

\begin{corollary}Let $L_\delta$ and $\mathcal{H}$ be defined as in \eqref{eq:H-ND} and \eqref{eq:L-deltaND}, respectively, and assume that $\mathcal{W}$ is given by \eqref{eq:W-ND} with \eqref{eq:qj}. If $\gamma=\gamma_1\equiv1$, then Assumptions \ref{asp:gamma}, \ref{Asp:SG-strong}, \ref{asp:S-S}, and \ref{asp:compact} are fulfilled. 
Consequently, the conclusions of Theorems \ref{thm:LDP-graph}, \ref{thm:LDP}, and \ref{thm:MDP-graph} hold.
\end{corollary}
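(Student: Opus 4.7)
The plan is to verify the four required assumptions in turn for the choice $\gamma=\gamma_1\equiv 1$; once this is done, Theorems \ref{thm:LDP-graph}, \ref{thm:LDP}, and \ref{thm:MDP-graph} apply directly and give the claim. Three of the four verifications are essentially immediate, so the bulk of the argument reduces to establishing the semigroup convergence in Assumption \ref{asp:S-S}.

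For Assumption \ref{asp:gamma} (and condition \eqref{eq:IkTk} taken with $\gamma_1$), note that with $\gamma\equiv 1$ the derivative $\frac{\ud \gamma}{\ud z}$ vanishes identically, so the inequality $\alpha_k\,|\frac{\ud\gamma}{\ud z}|^2\le C T_k \gamma^2$ holds trivially; moreover \eqref{eq:IkTk} becomes $\sum_k\int_{I_k} l_k(z)\,\ud z = |D| < \infty$, which is finite because $D$ is bounded. For Assumption \ref{Asp:SG-strong}, the operator $L_\delta = \tfrac12\partial_{x_1}^2+\tfrac{1}{2\delta^2}\partial_{x_2}^2$ with Neumann boundary conditions is non-positive and self-adjoint on $L^2(D)= H_\gamma$, so $S_\delta(t)=e^{tL_\delta}$ is an $L^2(D)$-contraction for every $\delta>0$ and $t\ge 0$; the constant $K=1$ is independent of $\delta$. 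For Assumption \ref{asp:compact}, since we set $\gamma_1=\gamma$, part (i) is exactly the verification just made, and part (ii) is precisely the content of Lemma \ref{lem:compactND}.

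The remaining and main task is Assumption \ref{asp:S-S}, namely the strong semigroup convergence
\begin{equation*}
\lim_{\delta\to 0}\sup_{t\in[\tau_0,T]}\|S_\delta(t)\varphi-\bar S(t)^\vee\varphi\|_{H_\gamma}=0\qquad\text{for every } \varphi\in H_\gamma=L^2(D),\ \tau_0\in(0,T).
\end{equation*}
The approach is the standard averaging argument. One uses the probabilistic representations $S_\delta(t)\varphi(x)=\E[\varphi(X_\delta^x(t))]$ and $\bar S(t)^\vee\varphi(x)=\E[\varphi^\wedge(\bar Y_{\Pi(x)}(t))]$, together with the weak convergence $\Pi(X_\delta^x)\Rightarrow \bar Y_{\Pi(x)}$ in $\mathcal{C}([0,T];\Gamma)$ proved in \cite{FW12-PTRF} under Hypothesis \ref{hyp:3.1} and the uniform exterior sphere condition on $D$. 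The ergodicity of the fast motion along each level set $C_k(z)$ then produces the averaging against the invariant probability measure $\mu_{z,k}$ in \eqref{eq:wedge}, converting $\varphi$ into $\varphi^\wedge$ in the limit for $t\ge\tau_0>0$.

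The hard part is upgrading the resulting pointwise convergence to convergence in $H_\gamma$, uniformly on $[\tau_0,T]$. This is handled by dominated convergence, using the uniform $L^2(D)$-contraction bound $\|S_\delta(t)\varphi\|_{H_\gamma}\le\|\varphi\|_{H_\gamma}$ obtained in the verification of Assumption \ref{Asp:SG-strong} and a standard tightness/uniform-integrability argument. The full execution of this step is essentially carried out in the proof of \cite[Theorem 7.2]{CF17}, so I would invoke that result directly rather than redoing the computation; no new analytic input is needed beyond what is already available in \cite{CF17,FW12-PTRF}.
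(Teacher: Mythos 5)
Your proposal is correct and follows the same structure as the paper's proof: verify Assumptions \ref{asp:gamma}, \ref{Asp:SG-strong}, \ref{asp:S-S}, and \ref{asp:compact} for $\gamma=\gamma_1\equiv 1$ (the vanishing of $\gamma'$, the identity $\sum_k\int_{I_k} T_k\,\ud z=|D|<\infty$, the $\delta$-uniform $L^2(D)$-contraction of $S_\delta$, Lemma \ref{lem:compactND}, and the semigroup convergence from \cite{CF17}) and then apply the theorems. The only point worth adjusting is the reference for Assumption \ref{asp:S-S}: the paper cites \cite[Corollary 5.3]{CF17}, which is the semigroup-level convergence statement $S_\delta(t)\varphi\to\bar S(t)^\vee\varphi$, whereas you cite \cite[Theorem 7.2]{CF17}, which is the solution-level convergence used elsewhere in this paper to justify \eqref{eq:u-u}; your self-adjointness argument for the contraction bound is a valid, slightly more self-contained alternative to the paper's direct citation of \cite[section 5]{CF17}.
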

\begin{proof}
By the boundedness of $D$ and $T_k=l_k$, one has $\sum_{k=1}^m \int_{I_k} \gamma(z, k) T_k(z) \ud z=|D|<\infty$, and thus \eqref{eq:IkTk} holds.
 Notice that Assumption \ref{asp:gamma} holds naturally for $\gamma\equiv 1$.
According to \cite[section 5]{CF17}, for every $\delta>0$, $S_\delta(t)$ is a contraction on $H_\gamma$, which implies Assumption \ref{Asp:SG-strong} with $K=1$. Besides,
Assumptions \ref{asp:S-S} and \ref{asp:compact} follow from \cite[Corollary 5.3]{CF17} and Lemma \ref{lem:compactND}, respectively.
\end{proof}

 We note that under the setting of section \ref{S:ND}, the authors in \cite{CH24} have already established the LDP of the multiscale model \eqref{eq:intro2} with additive noise (i.e., $g=1$). They imposed a constraint $\delta=\epsilon^{\frac{1}{\kappa}}$ with $\kappa>0$ being arbitrarily fixed, which is consistent with the condition \eqref{eq:deltaepsilon} in Theorem \ref{thm:LDP}.

\section{Fast advection asymptotics for stochastic incompressible viscous fluids}\label{S:FA}
In this section, we show that the moderate and large deviation results apply to an SPDE on an unbounded graph \cite{CF19}, which models the fast advection asymptotics for stochastic incompressible viscous fluids.

\subsection{Diffusion process on unbounded graph}\label{S:FAX}
For every fixed $\delta>0$, consider the diffusion process \eqref{eq:FAXt} 
 on $\R^2$ in Example \ref{Ex:2},
where $\mathcal{H}:\R^2\to \R$ denotes the stream function describing the flow pattern of a fluid or gas in 2-dimensional space.
\begin{hypothesis}\label{hyp:H}
(i) $\mathcal{H}$ is fourth-order continuously differentiable with a bounded second derivative and
$\min _{x \in \mathbb{R}^2} \mathcal{H}(x)=0;\\
$
(ii) $\mathcal{H}$ has only a finite number of critical points $ \widehat{x}_1, \ldots, \widehat{x}_{m_1}$. 
The Hessian matrix $\nabla^2 \mathcal{H}({\widehat{x}_i})$ is non-degenerate for every $i=1, \ldots, m_1$, and $\mathcal{H}({ \widehat{x}_i}) \neq\mathcal{H}({ \widehat{x}_j})$ if $i \neq j$;\\
(iii) There exist $\mathfrak{a}_1, \mathfrak{a}_2, \mathfrak{a}_3>0$ such that
 for all $x \in \mathbb{R}^2$ with $|x|$ large enough,
\begin{equation*}
\mathcal{H}(x) \ge \mathfrak{a}_1|x|^2,\quad|\nabla \mathcal{H}(x)| \ge \mathfrak{a}_2|x|,\quad\Delta \mathcal{H}(x) \ge \mathfrak{a}_3.
\end{equation*}
\end{hypothesis}

For a fixed $\delta>0$, the infinitesimal generator $L_\delta $ of the diffusion process $X_\delta$ in \eqref{eq:FAXt} is
\begin{equation}\label{eq:Ldelta}L_\delta =\frac{1}{2} \Delta +\frac{1}{\delta}\nabla^{\perp} \mathcal{H}(x)\cdot \nabla.
\end{equation}

By identifying all points in $\mathbb{R}^2$ in the same connected component of a given level set $C(z):=\{x\in\R^2:\mathcal{H}(x)=z\}$ of the Hamiltonian $\mathcal{H}$, we obtain the graph $\Gamma$. Each vertex corresponds to a critical point of $\mathcal{H}$.
\begin{figure}[!htb]
\centering
\includegraphics[width=0.7\linewidth]{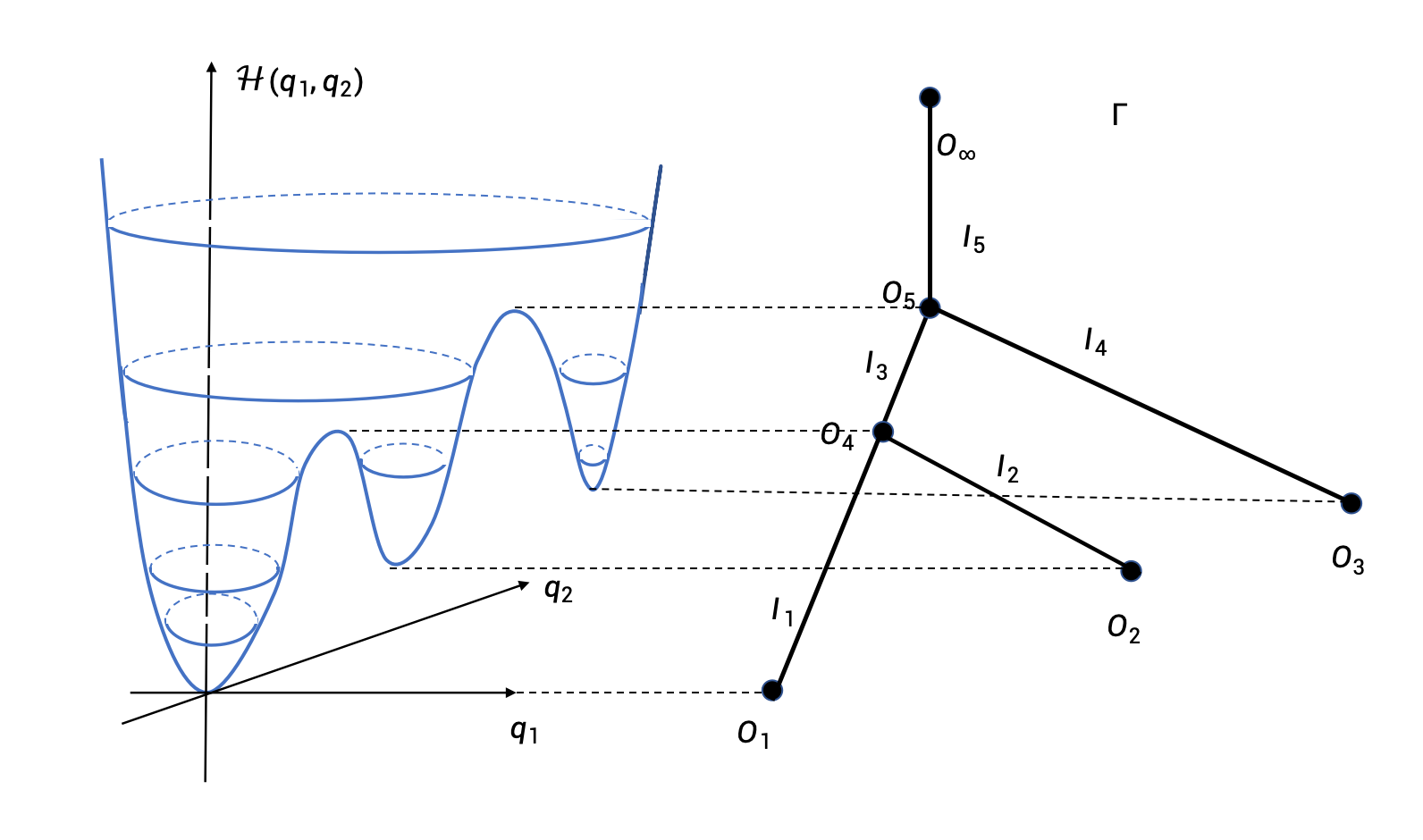}
\caption{One example for the Hamiltonian $\mathcal{H}$ with three local mimimizers ($O_1$, $O_2$, and $O_3$) and two saddle points ($O_4$ and $O_5$) and the graph $\Gamma$ in section \ref{S:FA}.}\label{fig:Hamilton}
\end{figure}
 We will also include $O_{\infty}$ among external vertices, the endpoint of the only unbounded interval on the graph, corresponding to the point at infinity (see Fig.\ \ref{fig:Hamilton}).
It has been shown in \cite[Chapter 8]{FW12} that for any initial value $x_0\in\R^2$,
 the projected process $\Pi(X_\delta(\cdot))$
 converges, in the sense of weak convergence of distributions in $\mathcal C([0,T];\Gamma)$, to a Markov process $\bar{Y}$ on $\Gamma$, corresponding to the infinitesimal generator \eqref{eq:barL}.

\subsection{Fast advection asymptotics}
Under {the setting of section \ref{S:FAX}}, the stochastic reaction diffusion advection equation \eqref{eq:intro2} 
describes how the density of particles moving along an incompressible flow in $D=\R^2$ changes over the time interval $[0,T/\delta]$, taking into account the effects of viscosity, the flow pattern, and the chemical reaction in which the particles are involved (see \cite{CF19,CS24}).

Assume that the stochastic process $\mathcal W$ in \eqref{eq:intro2} is a spatially homogeneous Wiener process
 with a finite non-negative definite symmetric spectral measure $\mu$. Let $\mathcal S(\R^2)$ be the set of smooth functions on $\R^2$ with rapid decay at infinity.
The operator $\mathscr{Q}:\mathcal S(\R^2)\times\mathcal S(\R^2)\to\R$, defined by
$$\mathscr{Q}(\varphi_1,\varphi_2):=\int_{\R^2}\Lambda(x)\int_{\R^2}\varphi_1(y)\varphi_2(y-x)\ud y\ud x,\quad\varphi_1,\varphi_2\in\mathcal S(\R^2),$$
is called the covariance form of $\mathcal W$, where $\Lambda$ is the Fourier transform of the spectral measure $\mu$.
By \cite[section 1.2]{PZ97}, the reproducing kernel space $\mathcal{U}_0$ of $\mathcal W$ can be identified with the dual of $\mathcal{S}_q $, where $\mathcal{S}_q $ is the completion of the set $\mathcal S(\R^2)/ker \mathscr{Q}$ with respect to the norm $q([\varphi]):=\sqrt{\mathscr{Q}(\varphi,\varphi)}$.
Denote by $L^2_{(s)}(\R^2,\ud \mu)$ the subspace of the Hilbert space $L^2(\R^2,\ud \mu;\mathbb{C})$ consisting of all functions $\varphi$ such that $ \overline{\varphi(-x)}=\varphi(x)$ for $x \in\R^2$.
 An orthonormal basis for the reproducing kernel space $\mathcal{U}_0$ is given by $\{e_j:=\widehat{\mathfrak{u}_j\mu}\}_{j\in\mathbb{N}^+}$ (cf. \cite[Proposition 1.2]{PZ97}), where $\{\mathfrak{u}_j\}_{j\in\mathbb{N}^+}$ is a complete orthonormal basis of the Hilbert space $L^2_{(s)}(\R^2,\ud \mu)$ and
$\widehat{\mathfrak{u}_j\mu}(x)=\int_{\R^2}e^{\textup{i}x\cdot\xi} \mathfrak{u}_j(\xi)\mu(\ud\xi)$, $x\in\R^2.$
Then the condition \eqref{eq:ei} follows from the finiteness of $\mu$. To be specific, for any $x\in \R^2$,
$\sum_{j=1}^\infty|\widehat{\mathfrak{u}_j\mu}(x)|^2=\mu(\R^2)<\infty$ (see \cite[formula (30)]{CS24}).

In contrast to section \ref{S:ND}, the graph weight function $\gamma_1$ is chosen here as $\gamma_1 := \sqrt{\gamma}$ rather than simply $\gamma$. We also note that one may choose $\gamma_1 = \gamma^{\iota}$ for any $\iota \in (0,1)$; for simplicity, we set $\iota = 1/2$ in this work. To verify Assumptions \ref{asp:gamma}, \ref{Asp:SG-strong}, \ref{asp:S-S} and \ref{asp:compact}, we suppose that 
 \begin{equation}\label{eq:dTdz}
\frac{\ud}{\ud z}T_k(z)\neq 0, \quad (z,k)\in\Gamma,
\end{equation}
and propose the following hypothesis.

\begin{hypothesis}\label{Asp:gamma-S}
Let $\gamma(z,k)=\vartheta(z)$ for every $(z,k)\in\Gamma$, where
$\vartheta:[0,\infty)\to(0,\infty)$ is a twice differentiable and bounded function such that
\begin{equation}\label{eq:sqrtgamma}
\sum_{k=1}^m \int_{I_k} \sqrt{\gamma(z,k)} T_k(z) \ud z<\infty.
\end{equation}
 Moreover, there exists some constant $C>0$ such that for any $z\ge 0$,
\begin{equation}\label{eq:zvartheta-strong}
z|\vartheta^{\prime\prime}(z)|+\sqrt{z}|\vartheta^{\prime}(z)|\le C\vartheta(z).
\end{equation}
\end{hypothesis}

\begin{remark}\label{Rmk:gamma}

(1) As stated in 
 \cite[pp.~10]{CS24}, under Hypothesis \ref{hyp:H}, there exists a constant $z_0>\max_{1\le i\le m_1} \mathcal{H}(\widehat{x}_i)$ such that $\{x\in \R^2,H(x)\ge z_0\}\subset \{x\in \R^2,\mathcal{H}(x)\ge\mathfrak{a}_1|x|^2\}$. 
If $\vartheta:[0,\infty)\to(0,\infty)$ is bounded and twice differentiable with 
\begin{equation*}
\vartheta(z)=c_0z^{-\kappa_1}\qquad \text{or} \qquad \vartheta(z)=c_0\exp(-\kappa_2(\sqrt{z}-\sqrt{z_0})),\quad \text{for } z>2z_0
\end{equation*}
for some $c_0>0$, $\kappa_1>2$ and $\kappa_2>0$, then Hypothesis \ref{Asp:gamma-S} holds.

 (2) The boundedness of $\vartheta$ and \eqref{eq:sqrtgamma} imply \eqref{eq:IkTk}. 
 Since $\vartheta$ is assumed to be twice differentiable and positive, the inequality \eqref{eq:zvartheta-strong} holds automatically for $z\in[0,\widetilde{z}_0]$ for any fixed $\widetilde{z}_0>0$. Furthermore,
one can obtain from \eqref{eq:zvartheta-strong} that for any $z\ge0$,
\begin{equation}\label{eq:gammaadmissible}
|z\vartheta^{\prime\prime}(z)|+|\vartheta^{\prime}(z)|\le C\vartheta(z),
\end{equation}
and thus \cite[Assumption 1]{CS24} holds. Therefore, the combination of \eqref{eq:dTdz}, \eqref{eq:gammaadmissible} and \eqref{eq:IkTk} indicates Assumptions \ref{Asp:SG-strong} and \ref{asp:S-S} (see \cite[Lemma 2.3]{CS24} and \cite[Corollary 5.3]{CF17}). Moreover, under Hypothesis \ref{Asp:gamma-S}, both \eqref{eq:gammaadmissible} and \eqref{eq:IkTk} continue to hold with $\vartheta$ replaced by $\sqrt{\vartheta}$. Therefore, Assumptions \ref{Asp:SG-strong} and \ref{asp:S-S} remain valid if $\gamma$ is replaced by $\sqrt{\gamma}$.

(3) {The condition \eqref{eq:dTdz} is used in verifying Assumption \ref{asp:S-S}. As pointed out in \cite{CH24}, this can be weakened such that \eqref{eq:dTdz} holds on the graph $\Gamma$ except for a finite number of points. Nevertheless, even this relaxed condition still excludes the typical example $H(x)=|x|^2$ for which the graph $\Gamma$ reduces to the half line $I_1:=[0,\infty)$ and the period $T_1(z)=\pi$.
}

\end{remark}

 We proceed to analyze the regularity and singularity of $\alpha_k$ and $T_k$ in \eqref{eq:AT}.
Without loss of generality, we assume that $I_m\cong(a_m,b_m)$ is the unique edge connecting $O_\infty$.
Recall that for each $k=1,\ldots,m$, we identify $I_k$ with $(a_k,b_k)\subset \R$, where $0\le a_k<b_k<\infty$ for $k=1,\ldots,m-1$, $a_m=\max_{k=1}^{m-1}b_k$ and $b_m=\infty$. 
By \cite[Lemma 1.1]{FW12}, the functions $T_k$ and $\alpha_k$ are continuously differentiable in $(a_k,b_k)$. Hence, for any $[\alpha^\prime,\beta^\prime]\subset (a_k,b_k)$, there exist two positive constants $c$ and $C$ depending on $\alpha^\prime$ and $\beta^\prime$ such that for any $z\in[\alpha^\prime,\beta^\prime]$,
$$c\le \min\{\alpha_k(z),T_k(z)\}\le \max\{\alpha_k(z),T_k(z)\}\le C.$$
In contrast, $\alpha_k$ and $T_k$ possibly exhibit {singularities or degeneracy} near the vertices.
Indeed, if $(z,k)$ approaches an endpoint of an edge $I_k$, corresponding to a vertex $O_i=(\mathcal{H}(x_i),k)$, then (see \cite[pp. 266-267]{FW12} and \cite[pp. 501]{CF19})
\begin{align}\label{eq:A}
&\lim_{(z,k)\to O_i}\alpha_k(z)\sim\begin{cases} \textup{const}\cdot(z- \mathcal{H}(x_i)),\quad &\text{if $x_i$ is a local minimizer or maximizer},\\
\textup{const},\quad& \text{if $x_i$ is a saddle point},\\
\textup{const}\cdot z,\quad &\text{if $O_i=O_\infty$};
\end{cases}\\\label{eq:T}
&\lim_{(z,k)\to O_i}T_k(z)\sim\begin{cases}\textup{const},&\text{if $x_i$ is a local minimizer or maximizer},\\
\textup{const}\cdot|\log|z-\mathcal{H}(x_i)||,& \text{if $x_i$ is a saddle point},\\
\textup{const},& \text{if $O_i=O_\infty$}.
\end{cases}
\end{align}
Here \textup{const} denotes some positive constant depending on $k$ and $O_i$, and may differ from line to line. 
To verify Assumption \ref{asp:compact}(ii), we make full use of the following embedding theorems.

\begin{lemma}\cite[Theorem 4]{GU09}\label{lem:GU09-4} Let $D^\prime\subset \R^n$ be an embedding domain, $1\le s< r<nq/(n-q)$, $q\le p$, $1<p<\infty$ and
$
\int_{D^\prime}|w(x)|^{\frac{r}{r-s}}\ud x<\infty.
$
Then the embedding operator $i:W^{1,p}(D^\prime)\hookrightarrow L^s(D^\prime;w(x)\ud x)$ is compact.
\end{lemma}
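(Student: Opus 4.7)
The plan is to reduce this weighted compact embedding to the classical unweighted Rellich--Kondrachov theorem via a single application of H\"older's inequality. First, under the hypotheses $q\le p$, $1<p<\infty$ and $1\le s<r<nq/(n-q)$, and since $D^\prime$ is an embedding domain, the continuous inclusion $W^{1,p}(D^\prime)\hookrightarrow W^{1,q}(D^\prime)$ combined with the compact Rellich--Kondrachov embedding $W^{1,q}(D^\prime)\hookrightarrow L^r(D^\prime)$ (valid in the subcritical regime $r<nq/(n-q)$, with the usual interpretation $nq/(n-q)=+\infty$ when $q\ge n$) yields that $W^{1,p}(D^\prime)\hookrightarrow L^r(D^\prime)$ is compact as an \emph{unweighted} embedding. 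This is where the condition $q\le p$ enters: it allows us to invoke Rellich--Kondrachov at the (possibly) smaller exponent $q$, which in turn relaxes the threshold on $r$ and hence on the integrability required of $w$.

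Next, I would take an arbitrary bounded sequence $\{u_\ell\}_{\ell\in\mathbb{N}^+}$ in $W^{1,p}(D^\prime)$. By the compactness just established, there exists a subsequence (still denoted $\{u_\ell\}$) and some $u\in L^r(D^\prime)$ with $u_\ell\to u$ strongly in $L^r(D^\prime)$. Applying H\"older's inequality with the conjugate exponents $r/s$ and $r/(r-s)$ to the product $|u_\ell-u|^s\cdot|w|$ gives
\begin{equation*}
\int_{D^\prime}|u_\ell-u|^s\,|w(x)|\,\ud x\le\|u_\ell-u\|_{L^r(D^\prime)}^s\left(\int_{D^\prime}|w(x)|^{\frac{r}{r-s}}\ud x\right)^{\frac{r-s}{r}}.
\end{equation*}
The second factor is finite by assumption, while the first tends to zero by the strong convergence in $L^r(D^\prime)$. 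Hence $u_\ell\to u$ in $L^s(D^\prime;w(x)\ud x)$, which proves the claimed compact embedding.

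The main obstacle is not analytical but rather definitional: making precise what ``embedding domain'' means (typically a bounded domain with sufficiently regular boundary, e.g.\ satisfying the cone condition or being Lipschitz, so that Rellich--Kondrachov applies) and handling the borderline cases $q=n$ and $q>n$, where the Sobolev exponent $nq/(n-q)$ must be interpreted as $+\infty$ and the compact embedding into $L^r(D^\prime)$ holds for every finite $r$. Once this is settled, the weight $w$ plays a purely passive role: it merely tilts the unweighted $L^r$-convergence into the weighted $L^s$-convergence, and the single H\"older step is exactly what dictates the integrability exponent $r/(r-s)$ appearing in the hypothesis on $w$.
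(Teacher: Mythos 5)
The paper cites this lemma from [GU09, Theorem 4] without reproducing a proof, so there is no internal argument to compare against; I therefore assess your proposal on its own terms. Your reduction is the standard one and it is correct: the chain $W^{1,p}(D^\prime)\hookrightarrow W^{1,q}(D^\prime)\hookrightarrow L^r(D^\prime)$ (the first inclusion continuous because $q\le p$ and $D^\prime$ has finite measure, the second compact by Rellich--Kondrachov in the subcritical regime $r<nq/(n-q)$) gives compactness of the unweighted embedding, and a single H\"older step with conjugate exponents $r/s$ and $r/(r-s)$ converts strong $L^r$-convergence of a subsequence into strong convergence in $L^s(D^\prime;w\,\ud x)$; extracting such a subsequence from any bounded $W^{1,p}$-sequence is precisely compactness of the weighted embedding. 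The one point that merits care, and which you flag explicitly, is that ``embedding domain'' in the sense of [GU09, Definition 1] must entail the subcritical compact Sobolev embedding invoked in the middle step; this is part of what that definition is designed to guarantee, and in the only place this lemma is used in the paper (Lemma \ref{lem:compactFA}, Case 1), $D^\prime$ is a bounded open interval in $\R$, so the issue is moot. Your treatment of the endpoint cases $q\ge n$ via the convention $nq/(n-q)=+\infty$ is likewise standard and correct.
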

We refer readers to \cite[Definition 1]{GU09} for the precise definition of an embedding domain, and note that bounded Lipschitz domains $D^\prime\subset\R^n$ are examples of embedding domains. 
\begin{lemma}\cite[Theorem 2.4]{CRW13}\label{lem:CRW1324} Suppose that $1\le s<\infty$ and $\Omega$ is an $s$-John domain in $\R^n$. Let $p,a,b$ satisfy $1\le p<\infty$, $a\ge0$, $b\in\R$, and $b-a<p$. If
$n+a>s(n-1+b)-p+1,$
then, for any $1\le q<\infty$ such that
$$\frac1q>\max\left\{\frac{1}{p}-\frac{1}{n},\frac{s(n-1+b)-p+1}{(n+a)p}\right\},$$
 the space
$L^1_{\rho^a}(\Omega)\cap E^p_{\rho^b}(\Omega):=\{F\in L^1(\Omega,\rho(x,\Omega^c)^a\ud x)\mid \nabla F\in L^p(\Omega,\rho(x,\Omega^c)^b\ud x)\}$ is compactly embedded into $L^q(\Omega,\rho(x,\Omega^c)^a\ud x)$.
Here, $\rho(x,\Omega^c)$ denotes the Eucludean distance between the sets $\{x\}$ and $\Omega^c:=\R^n-\Omega$, and
the space
$L^1_{\rho^a}(\Omega)\cap E^p_{\rho^b}(\Omega)$ is endowed with the norm
$$\|F\|_{L^1_{\rho^a}(\Omega)\cap E^p_{\rho^b}(\Omega)}:=\|F\|_{L^1(\Omega,\rho(x,\Omega^c)^a\ud x)}+\|\nabla F\|_{L^p(\Omega,\rho(x,\Omega^c)^b\ud x)},~ F\in L^1_{\rho^a}(\Omega)\cap E^p_{\rho^b}(\Omega).$$
\end{lemma}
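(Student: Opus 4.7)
The plan is to prove this compact embedding in two phases: first establish the continuous embedding $L^1_{\rho^a}(\Omega)\cap E^p_{\rho^b}(\Omega)\hookrightarrow L^q(\Omega,\rho^a\ud x)$ through a weighted Sobolev--Poincar\'e inequality tailored to the $s$-John geometry, and then bootstrap to compactness via a truncation argument combined with the classical Rellich--Kondrachov theorem on interior Lipschitz subdomains.

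For the continuous embedding, I would use a chain-of-balls argument. Fix a reference ball $B_0\subset\Omega$; for each $x\in\Omega$ the $s$-John property produces a Whitney-type chain $B_0,B_1,\ldots,B_{N(x)}$ joining $B_0$ to $x$, with radii comparable to $\rho(x_j,\Omega^c)$ and with John-type growth in which local diameters are controlled by the $s$-th power of the distance to $\partial\Omega$. Applying an unweighted local Sobolev--Poincar\'e inequality on each $B_j$ and telescoping along the chain represents $F(x)-F_{B_0}$ as a fractional-integral-type average of $|\nabla F|$ weighted by $\rho^{b/p}$. Taking the $L^q(\Omega,\rho^a\ud x)$-norm then reduces matters to a weighted norm inequality for a fractional integral on the quasi-metric space $(\Omega,\rho)$; the hypotheses $n+a>s(n-1+b)-p+1$ and $b-a<p$ are precisely the balance conditions that guarantee boundedness of this operator, while the upper bound on $1/q$ encodes the Sobolev gain after accounting for the $s$-distortion.

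For the compactness step, let $\Omega_\eta:=\{x\in\Omega:\rho(x,\Omega^c)>\eta\}$. On $\Omega_\eta$ both weights $\rho^a$ and $\rho^b$ are pinched between positive constants, and $\Omega_\eta$ is a bounded Lipschitz subdomain, so the classical Rellich--Kondrachov theorem yields compactness of $W^{1,p}(\Omega_\eta)\hookrightarrow L^q(\Omega_\eta)$, which transfers to the weighted space $L^q(\Omega_\eta,\rho^a\ud x)$. Given any bounded sequence $\{F_j\}$ in $L^1_{\rho^a}(\Omega)\cap E^p_{\rho^b}(\Omega)$, a diagonal extraction over $\eta=1/k$ produces a subsequence convergent in $L^q(\Omega_\eta,\rho^a\ud x)$ for every $k$. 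Convergence in the full space $L^q(\Omega,\rho^a\ud x)$ then follows from a tail estimate $\|F\|_{L^q(\Omega\setminus\Omega_\eta,\rho^a\ud x)}\to 0$ as $\eta\to 0^+$, uniformly over the bounded set; this estimate is obtained via H\"older's inequality, exploiting the strict inequality $\frac{1}{q}>\frac{s(n-1+b)-p+1}{(n+a)p}$, which provides exactly the integrability margin needed to absorb $\int_{\Omega\setminus\Omega_\eta}\rho^a\ud x$ as a vanishing factor.

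The main obstacle will be the weighted Sobolev--Poincar\'e inequality on $s$-John domains. The interplay between the degeneracy of $\rho^b$ near $\partial\Omega$ and the $s$-distortion of John chains is delicate, and a clean implementation requires a pointwise chain representation in the spirit of Franchi--P\'erez--Wheeden, together with weighted estimates for fractional integrals on spaces of homogeneous type. Once this chain inequality is in place, both the continuous embedding and the tail estimate for compactness follow in a largely routine manner, so the proof essentially stands or falls on the chain argument and on the precise verification that $b-a<p$ together with $n+a>s(n-1+b)-p+1$ supplies the correct Muckenhoupt-type balance between the source and target weights.
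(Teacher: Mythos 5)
This lemma is not proved in the paper: it is quoted verbatim as Theorem 2.4 of the cited reference \cite{CRW13} and used as a black-box ingredient in the proof of Lemma \ref{lem:compactFA}, so there is no in-paper argument to compare your sketch against.

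That said, your two-phase outline (weighted Sobolev--Poincar\'e via an $s$-John chain representation, then interior Rellich--Kondrachov plus a uniform tail estimate) is the standard architecture for this kind of result, and the way you read off the roles of the three hypotheses $b-a<p$, $n+a>s(n-1+b)-p+1$, and the strict bound on $1/q$ is sensible. One concrete flaw in the sketch: you assert that $\Omega_\eta:=\{x\in\Omega:\rho(x,\Omega^c)>\eta\}$ is a bounded Lipschitz subdomain, to which you then apply the classical Rellich--Kondrachov theorem. For a general $s$-John domain this is false --- $\Omega_\eta$ need not be Lipschitz, connected, or even an extension domain, especially when $s>1$ and the boundary has outward cusps. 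The correct fix is to cover $\Omega_\eta$ by finitely many balls $B_i$ with $2B_i\subset\Omega$ and apply Rellich--Kondrachov on each ball (where the weights $\rho^a,\rho^b$ are comparable to constants), then extract a diagonal subsequence over the finitely many balls and over $\eta=1/k$. Your remaining steps --- the chain representation yielding a fractional-integral estimate and the tail estimate using a slightly larger exponent $q'>q$ permitted by the strict inequality on $1/q$ followed by H\"older --- are conceptually right, but they are the genuinely hard part of the theorem (the heart of \cite{CRW13}) and your description leaves them at the level of a plan rather than a proof.
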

For the concept of $s$-John domain, we refer to \cite[pp. 29]{CRW13} or \cite[Definition 6.2.1]{HH19} for the precise definition. In particular, finite intervals are $1$-John domains in $\R$ (see, e.g., \cite[Example 6.2.2]{HH19}). 
Now we are in a position to show that Assumption \ref{asp:compact}(ii) holds with $\gamma_1=\sqrt\gamma$.

\begin{lemma}\label{lem:compactFA}
Let the functions $\alpha_k$ and $T_k$ be defined by \eqref{eq:AT} with $\mathcal{H}$ satisfying Hypothesis \ref{hyp:H}.
If $\gamma$ satisfies Hypothesis \ref{Asp:gamma-S} with
\begin{equation}\label{eq:vartheta}\lim_{R\to \infty}\sup_{z>R}\gamma(z,m)=0,
\end{equation}
then
 $\Lambda_{M,\sqrt\gamma}$ is a pre-compact set of $\mathcal{C}([0,T];\bar{H}_{\gamma})$.
\end{lemma}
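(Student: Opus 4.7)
The plan is to first invoke the Aubin--Lions framework of Remark \ref{rem:Hgamma}, so that it suffices to establish the compact embedding $\bar{\mathbb{W}}^{1,2}_{\sqrt{\gamma}}\hookrightarrow \bar H_\gamma$. The time-derivative control in $\bar H_{\sqrt\gamma}$ defining $\Lambda_{M,\sqrt\gamma}$ transfers to $\bar H_\gamma$ because the boundedness of $\gamma$ yields $\gamma \le \sqrt{\sup\gamma}\cdot\sqrt\gamma$, and hence the continuous embedding $\bar H_{\sqrt\gamma}\hookrightarrow \bar H_\gamma$; together with the compact spatial embedding this provides the required pre-compactness in $\mathcal C([0,T];\bar H_\gamma)$.

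To prove the spatial embedding, I would decompose the problem edge-by-edge. On each bounded edge $I_k$ ($k\le m-1$), I cover $\bar I_k$ by a middle portion where all weights $\alpha_k$, $T_k$, $\gamma$ are comparable to positive constants (classical Rellich--Kondrachov applies) together with one-sided neighborhoods of the endpoints. Near a local extremum $v$, the asymptotics \eqref{eq:A}--\eqref{eq:T} give $\alpha_k(z)\sim |z-v|$ with $T_k$ bounded; this matches Lemma \ref{lem:CRW1324} with $n=s=1$, $p=2$, $a=0$, $b=1$, $q=2$, whose hypotheses $b-a=1<p$, $n+a=1>0=s(n-1+b)-p+1$, and $1/q=1/2>\max\{-1/2,0\}$ are all readily verified, yielding compactness into $L^2$ against the bounded target weight $T_k\gamma$. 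Near a saddle $v$, $\alpha_k$ is bounded away from zero while $T_k\sim |\log|z-v||$; the derivative bound $\int|f'|^2\alpha_k\sqrt\gamma\,dz<\infty$ then forces $f'\in L^2$ on the subinterval, the standard Rellich embedding $W^{1,2}\hookrightarrow L^\infty$ provides subsequential $L^\infty$-convergence, and since $T_k\gamma\in L^1$ (with $\gamma$ bounded and $|\log|$ integrable), Lemma \ref{lem:GU09-4} (or directly dominated convergence) upgrades this to convergence in $L^2(T_k\gamma\,dz)$.

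The unbounded edge $I_m\cong(a_m,\infty)$ requires the decay hypothesis \eqref{eq:vartheta}. Splitting $I_m=(a_m,R]\cup(R,\infty)$, the piece $(a_m,R]$ is handled exactly as a bounded edge, and the tail is estimated uniformly by
\[
\int_R^\infty |f(z,m)|^2 T_m(z)\gamma(z,m)\,dz \le \sqrt{\sup_{z>R}\gamma(z,m)}\,\int_R^\infty |f(z,m)|^2 T_m(z)\sqrt{\gamma(z,m)}\,dz \le M\sqrt{\sup_{z>R}\gamma(z,m)},
\]
which tends to $0$ as $R\to\infty$ by \eqref{eq:vartheta}, uniformly over the bounded family. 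A diagonal extraction over $R\to\infty$, followed by a finite diagonalization across the $m$ edges, yields the desired convergent subsequence and hence the compact embedding.

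The main obstacle I anticipate is the bookkeeping at the endpoints: checking that, on a one-sided neighborhood of a vertex, the distance to the boundary used by Lemma \ref{lem:CRW1324} coincides up to bounded factors with the distance to the specific vertex driving the degeneracy of $\alpha_k$, and that the two lemmas can be combined seamlessly when the two endpoints of an edge have different types (saddle vs.\ extremum). A secondary technical point is uniformity of the tail estimate on $I_m$ across the whole sequence; this follows from the defining bound of $\Lambda_{M,\sqrt\gamma}$ rather than any pointwise information about an individual $f$.
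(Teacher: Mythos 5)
Your proposal is correct and follows essentially the same path as the paper: reduce via Aubin--Lions to the compact embedding $\bar{\mathbb{W}}^{1,2}_{\sqrt\gamma}\hookrightarrow\bar H_\gamma$, decompose into half-edges each touching a single vertex, apply Lemma \ref{lem:CRW1324} at local extrema and Lemma \ref{lem:GU09-4} at saddles, and truncate the unbounded edge using \eqref{eq:vartheta} with a diagonal extraction. The only small deviation is your alternative $W^{1,2}\hookrightarrow L^\infty$ plus dominated-convergence argument at saddles, which is a valid and slightly more elementary substitute for the paper's direct invocation of Lemma \ref{lem:GU09-4} there.
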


\begin{proof}
In view of Remark \ref{rem:Hgamma}, it suffices to show that the space $\bar{\mathbb{W}}^{1,2}_{\sqrt{\gamma}}$ defined in \eqref{eq:Lambda} with $\tilde{\gamma}=\sqrt{\gamma}$ is compactly embedded into $\bar{H}_\gamma$.
 To handle the external and internal vertices separately, we partition edges $I_k$ as follows.
 For $\ell\in\{1,2,\ldots,2m\}$, let $\widetilde{T}_\ell:=T_{\ell\pmod m}$, $\widetilde{\alpha}_\ell:=\alpha_{\ell \pmod m}$, and
\begin{align*}
\widetilde{I}_{\ell}=\begin{cases}(a_\ell,\frac12(a_\ell+b_\ell)),\quad &\ell=1,\ldots,m-1,\\
(a_m,a_m+1),\quad &\ell=m,\\
(\frac12(a_{\ell-m}+b_{\ell-m}),b_{\ell-m}),\quad &\ell=m+1,\ldots,2m-1,\\
(a_m+1,\infty),\quad &\ell=2m. 
 \end{cases}
 \end{align*}
 Consequently, $\bar{H}_{\sqrt\gamma}=\bigoplus_{k=1}^{m} L^2(I_k,T_k{\sqrt\gamma}\ud z)=\bigoplus_{\ell=1}^{2m} L^2(\widetilde{I}_{\ell},\widetilde{T}_\ell{\sqrt\gamma}\ud z)$. 
 For each $\ell=1,2,\ldots,2m$, we introduce a subspace $\mathbb{G}_{\ell,\sqrt{\gamma}}$ of $L^2(\widetilde{I}_{\ell},\widetilde{T}_\ell{\sqrt\gamma}\ud z)$ defined by
 \begin{equation*}
 \mathbb{G}_{\ell,\sqrt{\gamma}}:=\left\{w:\widetilde{I}_{\ell}\to \R\mid \|w\|_{\mathbb{G}_{\ell,\sqrt{\gamma}}}^2:=\int_{\widetilde{I}_{\ell}}|w|^2\widetilde{T}_\ell{\sqrt\gamma}\ud z+\int_{\widetilde{I}_{\ell}}|w^\prime|^2\widetilde{\alpha}_\ell{\sqrt\gamma}\ud z<\infty\right\}.
 \end{equation*}
Then by \eqref{eq:Lambda}, the direct sum space $\bigoplus_{\ell=1}^{2m}\mathbb{G}_{\ell,\sqrt{\gamma}}$ coincides with $\bar{\mathbb{W}}^{1,2}_{\sqrt{\gamma}}$.
By a diagonal argument, the compact embedding of $\bar{\mathbb{W}}^{1,2}_{\sqrt{\gamma}}$ into $\bar{H}_{\gamma}$ will follow from \emph{Claims 1} and \emph{2}. 

\emph{Claim 1.
 For each $\ell=1,2,\ldots,2m-1$,
 $\mathbb{G}_{\ell,\sqrt{\gamma}}$
 is compactly embedded into $L^2(\widetilde{I}_\ell,\widetilde{T}_\ell\gamma\ud z)$.}

 According to Hypothesis \ref{Asp:gamma-S}, $\gamma$ is uniformly bounded from below and above by some positive constants $c$ and $C$, respectively, on $\cup_{\ell=1}^{2m-1}I_\ell$. Namely, 
$c\le \gamma(z,k)\le C$
for all $(z,k)\in\cup_{\ell=1}^{2m-1}I_\ell$.
 For each $\ell=1,\ldots,2m-1$, the edge $\widetilde{I}_\ell$ connects only one vertex of $\Gamma$, which corresponds to a critical point of $\mathcal{H}$.
We will only prove \emph{Claim 1} for the cases $1\le \ell\le m$ where $\widetilde{I}_\ell=(a_\ell,d_\ell)$ with $d_\ell:=\frac12(a_\ell+b_\ell)$, since the proof for the cases $m+1\le \ell\le 2m-1$ is analogous.

 \emph{Case 1. $a_\ell$ corresponds to a saddle point of $\mathcal{H}$.}

 Since $\widetilde{T}_\ell(z)\sim \textup{const}\cdot|\log(z-a_\ell)|$ for $z\to a_\ell^+$, we have
\begin{align*}
\int_{\widetilde{I}_\ell}|\widetilde{T}_\ell(z)\gamma(z,\ell)|^2\ud z=\int_{a_\ell}^{d_\ell}|T_\ell(z)\gamma(z,\ell)|^2\ud z<\infty.
\end{align*}
Furthermore, by applying Lemma \ref{lem:GU09-4} with $w=\widetilde{T}_\ell\gamma$, $D^\prime=\widetilde{I}_\ell$, $n=q=1$, $p=2$, $s=2$, and $r=4$, we conclude that
$W^{1,2}(\widetilde{I}_\ell)$ is compactly embedded into $L^2(\widetilde{I}_\ell,\widetilde{T}_\ell\gamma\ud z)$. Therefore,
$\mathbb{G}_{\ell,\sqrt{\gamma}}\subset W^{1,2}(\widetilde{I}_\ell)$ is also compactly embedded into $L^2(\widetilde{I}_\ell,\widetilde{T}_\ell\gamma\ud z)$.

\emph{Case 2. $a_\ell$ corresponds to a local minimizer or maximizer of $\mathcal{H}$.}

We apply Lemma \ref{lem:CRW1324} with the bounded domain $\Omega=\widetilde{I}_\ell=(a_\ell,d_\ell)\subset \R$, taking $s=1$, $a=0$, $b=1$, $p=q=2$, and $n=1$. Note that $n+a=1$ and $s(n-1+b)-p+1=0$. This yields that $L^1(\widetilde{I}_\ell)\cap E_{\rho\ud z}^2(\widetilde{I}_\ell)$ is compactly embedded into $L^2(\widetilde{I}_\ell)$, where
$$E_{\rho\ud z}^2(\widetilde{I}_\ell):=\left\{w:\widetilde{I}_\ell\to\R:\int_{\widetilde{I}_\ell}|w^\prime(z)|^2\min\{z-a_\ell,d_\ell-z\}\ud z<\infty\right\}.$$
Since $\gamma$ and $\widetilde{T}_\ell$ are uniformly bounded from below and above on $\widetilde{I}_\ell$, it follows that
$L^1(\widetilde{I}_\ell,\widetilde{T}_\ell\gamma\ud z)\cap E_{\rho\ud z}^2(\widetilde{I}_\ell)$ is compactly embedded into $L^2(\widetilde{I}_\ell,\widetilde{T}_\ell\gamma\ud z)$. { Referring to the asymptotic behaviors of $\widetilde{\alpha}_\ell$ and $\widetilde{T}_\ell$ near the local minimizer or maximizer of $\mathcal{H}$ (see \eqref{eq:A} and \eqref{eq:T}), there exists a positive constant $c$ such that $\min\{ z-a_\ell,d_\ell-z\}\le c\alpha_\ell(z)$ for all $z\in \widetilde{I}_\ell$. This observation, combined with the H\"older inequality, implies that
}
 $\mathbb{G}_{\ell,\sqrt{\gamma}}\subset L^1(\widetilde{I}_\ell,\widetilde{T}_\ell\gamma\ud z)\cap E_{\rho\ud z}^2(\widetilde{I}_\ell).$ As a consequence, one has that $\mathbb{G}_{\ell,\sqrt\gamma}$ is compactly embedded into $L^2(\widetilde{I}_\ell,\widetilde{T}_\ell\gamma\ud z)$.

\emph{Claim 2. Under \eqref{eq:vartheta}, $\mathbb{G}_{2m,\sqrt{\gamma}}$ is compactly embedded into $L^2(\widetilde{I}_{2m},\widetilde{T}_{2m}\gamma\ud z)$.}

Notice that
$L^2(\widetilde{I}_{2m},\widetilde{T}_{2m}\gamma\ud z)=L^2((a_m+1,\infty),T_m\gamma\ud z)$.
Let $\{w_n\}_{n=1}^\infty$ be a sequence of functions in $\mathbb{G}_{2m,\sqrt\gamma}$ with $w_n\rightharpoonup 0$ (weak convergence) as $n\to \infty$ and $\{\|w_n\|_{\mathbb{G}_{2m,\sqrt\gamma}}\}_{n=1}^\infty$ being uniformly bounded.
For every $R>a_{m}+1$,
\begin{align}\label{eq:wn}
& \int_{a_m+1}^\infty|w_n(z)|^2T_m(z)\gamma(z,m)\ud z\\\notag
&\le \int_{a_m+1}^R|w_n(z)|^2T_m(z)\gamma(z,m)\ud z+\sup_{n\ge1}\|w_n\|^2_{\mathbb{G}_{2m,\sqrt{\gamma}}}\sup_{z\ge R}\sqrt{\gamma(z,m)},
\end{align}
where the second term on the right-hand side tends to $0$ as $R\to\infty$, due to \eqref{eq:vartheta}.
For any fixed $R>0$,
 the function $\alpha_m$ and $T_m$ are bounded from below on $[a_m+1,R]$ by some positive constant depending on $R$ and $a_m$. By the Rellich--Kondrachov theorem (see, e.g., \cite[section 5.7]{EL10}), $W^{1,2}(a_m+1,R)$ 
 is compactly embedded into $L^2(a_m+1,R)$. This means that the inclusion $i:W^{1,2}(a_m+1,R)\to L^2(a_m+1,R)$ is a completely continuous operator, mapping weak convergence to strong convergence.
Since $w_n\rightharpoonup 0$ as $n\to\infty$ and $\{w_n|_{[a_m+1,R]}\}_{n=1}^\infty$ is a bounded sequence in $W^{1,2}(a_m+1,R)$, it follows that for any fixed $R>a_m+1$,
$$\lim_{n\to \infty}\int_{a_m+1}^R|w_n(z)|^2T_m(z)\gamma(z,m)\ud z=0.$$
Passing to the limits as $n\to\infty$ and subsequently as $R\to \infty$ on both sides of \eqref{eq:wn} yields that $w_n\to0$ in $L^2((a_m+1,\infty),T_m\gamma\ud z)$. This established that the inclusion $i:\mathbb{G}_{2m,\sqrt\gamma}\to L^2((a_m+1,\infty),T_m\gamma\ud z)$ is a completely continuous operator. Together with the reflexivity of the Hilbert space $\mathbb{G}_{2m,\sqrt\gamma}$, this confirms the compactness of the inclusion $i:\mathbb{G}_{2m,\sqrt\gamma}\to L^2((a_m+1,\infty),T_m\gamma\ud z)$. The proof is thus completed.
\end{proof}

\begin{corollary}
Assume that $\mathcal W$ is a spatially homogeneous Wiener process
 with a finite non-negative definite symmetric spectral measure $\mu$.
 Let $L_\delta$ be defined as in \eqref{eq:Ldelta} with the function $\mathcal{H}:\R^2\to \R$ satisfying
 Hypothesis \ref{hyp:H} and \eqref{eq:dTdz}.
If $\gamma_1=\sqrt{\gamma}$ and $\gamma$ satisfy Hypothesis \ref{Asp:gamma-S} and \eqref{eq:vartheta}, 
 then Assumptions \ref{asp:gamma}, \ref{Asp:SG-strong}, \ref{asp:S-S}, and \ref{asp:compact} are valid. Therefore, the conclusions of Theorems \ref{thm:LDP-graph}, \ref{thm:LDP}, and \ref{thm:MDP-graph} hold.
\end{corollary}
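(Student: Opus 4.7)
The plan is to systematically verify each of Assumptions \ref{asp:gamma}, \ref{Asp:SG-strong}, \ref{asp:S-S}, and \ref{asp:compact} for the weights $\gamma$ and $\gamma_1=\sqrt{\gamma}$, after which Theorems \ref{thm:LDP-graph}, \ref{thm:LDP}, and \ref{thm:MDP-graph} directly deliver the three deviation statements.

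First I would verify Assumption \ref{asp:gamma}, i.e., $\alpha_k(z)|\vartheta'(z)|^2\le CT_k(z)\vartheta(z)^2$ uniformly in $(z,k)\in\Gamma$. Away from vertices this is automatic on compact subsets of each edge by the continuity and strict positivity of $\alpha_k,T_k,\vartheta$. Near an interior vertex corresponding to a local extremum or saddle of $\mathcal H$, the asymptotics \eqref{eq:A}--\eqref{eq:T} keep $\alpha_k$ bounded and $T_k$ bounded below by a positive constant (in fact $T_k\to\infty$ at a saddle), so the bound is routine from the continuity of $\vartheta'$. The only delicate case is the external vertex $O_\infty$: here $\alpha_k(z)\sim \mathrm{const}\cdot z$ and $T_k(z)\sim \mathrm{const}$ as $z\to\infty$, so the inequality reduces to $z|\vartheta'(z)|^2\le C\vartheta(z)^2$, which is precisely the content of the sharper bound $\sqrt{z}\,|\vartheta'(z)|\le C\vartheta(z)$ supplied by \eqref{eq:zvartheta-strong}. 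This is where I expect to concentrate attention: it is exactly the role of the strengthened estimate \eqref{eq:zvartheta-strong}, as opposed to the weaker form \eqref{eq:gammaadmissible}, to accommodate the linear growth of $\alpha_k$ at infinity.

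Next I would verify Assumption \ref{asp:compact}(i), namely that Assumption \ref{asp:gamma} and \eqref{eq:IkTk} hold with $\gamma$ replaced by $\sqrt{\gamma}$. The integrability is exactly \eqref{eq:sqrtgamma}; the inequality follows from the chain rule $(\sqrt{\vartheta})'=\vartheta'/(2\sqrt{\vartheta})$, which reduces the requirement $\alpha_k|(\sqrt{\vartheta})'|^2\le CT_k(\sqrt{\vartheta})^2$ to exactly the estimate just proved for $\vartheta$. Assumption \ref{asp:compact}(ii) is the content of Lemma \ref{lem:compactFA}, whose hypotheses (Hypothesis \ref{Asp:gamma-S} and \eqref{eq:vartheta}) are already in force. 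Finally, Assumptions \ref{Asp:SG-strong} and \ref{asp:S-S} for both $\gamma$ and $\sqrt{\gamma}$ follow verbatim from Remark \ref{Rmk:gamma}(2), which records that \eqref{eq:dTdz} together with \eqref{eq:gammaadmissible} and \eqref{eq:IkTk} imply them, and that the same chain of reasoning carries over with $\vartheta$ replaced by $\sqrt{\vartheta}$.

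With all four assumptions established, the conclusions of Theorems \ref{thm:LDP-graph}, \ref{thm:LDP}, and \ref{thm:MDP-graph} apply immediately, yielding the LDP of $\{(\bar u^\epsilon)^\vee\}_{\epsilon\in(0,1]}$, the LDP of $\{u^\epsilon_{\psi(\epsilon)}\}_{\epsilon\in(0,1]}$, and the MDP of $\{(\bar X^\epsilon)^\vee\}_{\epsilon\in(0,1]}$, respectively. The genuine mathematical difficulty of the section is absorbed into Lemma \ref{lem:compactFA}, which handled the compact embedding $\bar{\mathbb W}^{1,2}_{\sqrt{\gamma}}\hookrightarrow \bar H_{\gamma}$ separately for each vertex type via Lemmas \ref{lem:GU09-4} and \ref{lem:CRW1324}; relative to that, the corollary itself is essentially the verification that the weight structure dictated by Hypothesis \ref{Asp:gamma-S} is compatible with the vertex asymptotics \eqref{eq:A}--\eqref{eq:T}.
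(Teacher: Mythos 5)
Your verification chain — Assumption \ref{asp:gamma} from the vertex asymptotics \eqref{eq:A}--\eqref{eq:T} with the key role of \eqref{eq:zvartheta-strong} at $O_\infty$, Assumption \ref{asp:compact}(i) by reducing the $\sqrt{\vartheta}$ estimate to the $\vartheta$ estimate via the chain rule and using \eqref{eq:sqrtgamma}, Assumption \ref{asp:compact}(ii) from Lemma \ref{lem:compactFA}, and Assumptions \ref{Asp:SG-strong}, \ref{asp:S-S} from Remark \ref{Rmk:gamma}(2) — is exactly the paper's argument, merely spelled out in slightly more detail at the interior vertices. The proposal is correct and takes essentially the same route.
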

\begin{proof}
The verification of Assumptions \ref{Asp:SG-strong} and \ref{asp:S-S}, as well as \eqref{eq:IkTk}, has been illustrated in Remark \ref{Rmk:gamma}(2). Combining \eqref{eq:A}, \eqref{eq:T} with \eqref{eq:gammaadmissible}, a sufficient condition for both Assumption \ref{asp:gamma} and {Assumption \ref{asp:compact}(i)} 
is 
$\limsup_{z\to\infty}\sqrt{z}|\vartheta^\prime(z)|/ \vartheta(z)< \infty$, which holds automatically due to \eqref{eq:zvartheta-strong} in Hypothesis \ref{Asp:gamma-S}. Finally, Assumption \ref{asp:compact}(ii) holds due to Lemma \ref{lem:compactFA}.
\end{proof}

\subsection{Counterexample to compact Sobolev embedding}
In this subsection, we show the existence of $\gamma$ for which the embedding $\bar{\mathbb{W}}_{\gamma}^{1,2}\hookrightarrow \bar{H}_\gamma$ is not compact, whereas $\bar{\mathbb{W}}_{\sqrt\gamma}^{1,2}\hookrightarrow \bar{H}_\gamma$ is. 
This illustrates the necessity of introducing another graph weight function $\gamma_1$ in Assumption \ref{asp:compact}.

\begin{proposition}\label{lem:counterexample}
Assume that Hypothesis \ref{Asp:gamma-S} holds with 
$\vartheta(z)=c_0z^{-\kappa_1}$ for some $\kappa_1>1$ and all $z>2z_0$, where $z_0$ is the same as in Remark \ref{Rmk:gamma}(1). Then the embedding $\bar{\mathbb{W}}^{1,2}_{\gamma}\hookrightarrow \bar{H}_\gamma$ is not compact.
\end{proposition}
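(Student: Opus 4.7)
The plan is to construct a bounded sequence in $\bar{\mathbb{W}}^{1,2}_\gamma$ with no convergent subsequence in $\bar H_\gamma$, by producing \emph{traveling bumps} on the unbounded edge $I_m\cong(a_m,\infty)$ whose supports march off to infinity. The intuition is that the polynomial growth $\alpha_m(z)\sim \mathrm{const}\cdot z$ as $z\to\infty$ from \eqref{eq:A} is exactly balanced by the polynomial decay $\gamma(z,m)=c_0 z^{-\kappa_1}$, leaving enough budget to carry the derivative weight $\alpha_m\gamma$ while the mass weight $T_m\gamma$ stays bounded below on a carefully chosen window.

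First I would fix $\phi\in C_c^\infty(\R)$ with $\mathrm{supp}\,\phi\subset[0,1]$ and $\int_0^1\phi^2(y)\,\ud y=1$, pick $z_0^\prime>2z_0$ large enough that the asymptotics $c_1\le T_m(z)\le C_1$, $c_1 z\le\alpha_m(z)\le C_1 z$ from \eqref{eq:A}--\eqref{eq:T} together with $\gamma(z,m)=c_0z^{-\kappa_1}$ all hold on $[z_0^\prime,\infty)$, and set $z_n:=2^n z_0^\prime$, $\ell_n:=\sqrt{z_n}$. For amplitudes $c_n>0$ to be chosen, define
\[
w_n(z,k):=c_n\,\phi\!\left(\tfrac{z-z_n}{\ell_n}\right)\mathbb I_{\{k=m\}}\mathbb I_{\{z\in[z_n,z_n+\ell_n]\}}.
\]
Because $\ell_n=o(z_n)$, the factors $T_m(z)$ and $\gamma(z,m)$ are essentially constant on $[z_n,z_n+\ell_n]$, and a change of variable yields $\|w_n\|_{\bar H_\gamma}^2\sim c_n^2\ell_n z_n^{-\kappa_1}$. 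I would then pick $c_n$ so that $\|w_n\|_{\bar H_\gamma}=1$, which forces $c_n^2\sim z_n^{\kappa_1-1/2}$.

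Second I would estimate the derivative part of the $\bar{\mathbb{W}}^{1,2}_\gamma$-norm with this scaling:
\[
\int_{I_m}|w_n^\prime(z)|^2\alpha_m(z)\gamma(z,m)\,\ud z\;\lesssim\;\frac{c_n^2}{\ell_n}\cdot z_n\cdot z_n^{-\kappa_1}\;=\;\frac{z_n}{\ell_n^2}\;=\;1,
\]
so $\sup_n\|w_n\|_{\bar{\mathbb{W}}^{1,2}_\gamma}<\infty$. The choice $\ell_n=\sqrt{z_n}$ is precisely the one that simultaneously makes $z_n/\ell_n^2=O(1)$ and $\ell_n=o(z_n)$, so it is dictated by the problem. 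Since $z_{n+1}-(z_n+\ell_n)=z_0^\prime 2^n-\sqrt{z_0^\prime 2^n}>0$ for all large $n$, the supports of $w_n$ are eventually pairwise disjoint in $I_m$, so
\[
\|w_n-w_p\|_{\bar H_\gamma}^2=\|w_n\|_{\bar H_\gamma}^2+\|w_p\|_{\bar H_\gamma}^2=2,\qquad n\ne p\text{ large},
\]
ruling out any Cauchy subsequence in $\bar H_\gamma$ and thereby proving non-compactness of $\bar{\mathbb{W}}^{1,2}_\gamma\hookrightarrow\bar H_\gamma$.

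The delicate step is the second one: pinning down the right bump width $\ell_n$ relative to the escape rate $z_n$. As a sanity check, repeating the derivative computation with $\sqrt\gamma$ in place of $\gamma$ produces $c_n^2 z_n^{1-\kappa_1/2}/\ell_n\sim z_n^{\kappa_1/2}\to\infty$, so the \emph{same} bump family is unbounded in $\bar{\mathbb{W}}^{1,2}_{\sqrt\gamma}$. This is consistent with Lemma \ref{lem:compactFA} and confirms that the distinct auxiliary weight $\gamma_1=\sqrt\gamma$ in Assumption \ref{asp:compact} is genuinely needed and cannot be replaced by $\gamma$ itself.
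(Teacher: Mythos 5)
Your construction is correct, and it proves the result by a genuinely different route than the paper. The paper first isolates an abstract non-compactness criterion (the paper's auxiliary Lemma~\ref{lem:non-compact}) built on fixed-width tail cutoffs: functions $\varphi_n$ that are $1$ on $(r_n,\infty)$ and $0$ on $[0,r_n-\varepsilon]$ for a fixed boundary-layer width $\varepsilon$, normalized to unit mass; the derivative cost is paid on $[r_n-\varepsilon,r_n]$, and the hypothesis is a single integral inequality comparing tail mass $\int_{r_n}^\infty h$ against the gradient-weight mass $\int_{r_n-\varepsilon}^{r_n} zh$. That criterion is then verified for $\vartheta(z)=c_0z^{-\kappa_1}$ (here $\kappa_1>1$ is used to make $\int_r^\infty \vartheta$ finite), and the one-dimensional non-compactness is transported back to $\bar{\mathbb{W}}^{1,2}_\gamma\hookrightarrow\bar H_\gamma$ through a zero-extension operator and two norm-equivalence estimates using \eqref{eq:A}--\eqref{eq:T}. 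You instead build traveling bumps directly on the graph with a \emph{variable} width $\ell_n=\sqrt{z_n}$ tuned so that the gradient budget $c_n^2 z_n^{1-\kappa_1}/\ell_n = z_n/\ell_n^2 = O(1)$ while $\ell_n=o(z_n)$ keeps the weights essentially constant on each bump. This buys you two simplifications over the paper's argument: the disjoint supports make $\|w_n-w_p\|_{\bar H_\gamma}^2=2$ outright, so the no-Cauchy-subsequence step is immediate (the paper's overlapping cutoffs instead argue by a.e.\ pointwise convergence plus the normalization), and because your test functions are compactly supported you never need $\int_r^\infty\vartheta\,\ud z<\infty$; the argument is local and would work for any $\kappa_1$ for which $\vartheta$ is bounded, whereas the paper's fixed-$\varepsilon$ criterion genuinely uses $\kappa_1>1$. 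The tradeoff is that the paper's Lemma~\ref{lem:non-compact} is a reusable general-purpose criterion, whereas your construction is ad hoc to the polynomial weight — though your sanity check that the same $w_n$ blow up in $\bar{\mathbb{W}}^{1,2}_{\sqrt{\gamma}}$ is a nice bonus that the paper does not offer, since it exhibits precisely why the auxiliary weight $\gamma_1=\sqrt{\gamma}$ restores compactness.
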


The proof of Proposition \ref{lem:counterexample} relies on the following result.
\begin{lemma}\label{lem:non-compact}
Let $h:(R,\infty)\to [0,\infty)$ be a measurable function for some $R>0$. Define
 $\mathbb{F}_R^h$ as the space of functions 
 $\xi\in L^{2}((R,\infty),h\ud z)$ with $\xi^\prime\in L^{2}((R,\infty),\!zh(z)\ud z)$, equipped with the norm
$\|\xi\|_{\mathbb{F}_R^h}:=(\|\xi\|^2_{L^{2}((R,\infty),h\ud z)}+\|\xi^\prime\|_{L^{2}((R,\infty),zh(z)\ud z)}^2)^{\frac12}.$
If there exist some $\varepsilon,\varrho>0$ and a sequence $\{r_n\}_{n\in\mathbb{N}^+}\subset (0,\infty)$ with $\lim_{n\to\infty}r_n=\infty$ such that for any $n\in\mathbb{N}^+$,
\begin{equation}\label{eq:com}
\int_{r_n}^\infty h(z)\ud z> \varrho\int_{r_n-\varepsilon}^{r_n} z h(z)\ud z,
\end{equation}
then
the embedding
$
\mathbb{F}_R^h \hookrightarrow L^{2}((R,\infty);h\ud z)
$
is not compact.
\end{lemma}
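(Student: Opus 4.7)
The plan is to exhibit a sequence $\{\eta_n\}_{n\in\mathbb{N}^+}\subset\mathbb{F}_R^h$ that is bounded in $\|\cdot\|_{\mathbb{F}_R^h}$ but has no subsequence converging in $L^2((R,\infty),h\,\ud z)$; this directly rules out compactness of the embedding. The hypothesis \eqref{eq:com} encodes the fact that the weighted-derivative cost of a short transition across $(r_n-\varepsilon,r_n)$ is dominated by the $L^2(h)$-mass available in the tail $(r_n,\infty)$, so the natural candidate is the piecewise-linear cutoff
\begin{equation*}
\xi_n(z):=\begin{cases} 0, & R<z\le r_n-\varepsilon,\\ \varepsilon^{-1}(z-r_n+\varepsilon), & r_n-\varepsilon\le z\le r_n,\\ 1, & z\ge r_n,\end{cases}
\end{equation*}
normalized by $\eta_n:=a_n^{-1/2}\xi_n$, where $a_n:=\int_{r_n}^\infty h(z)\,\ud z$. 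Note that \eqref{eq:com} forces $a_n>0$, and since $r_n\to\infty$, after passing to a subsequence we may assume $r_n-\varepsilon>R$ throughout, so that $\xi_n\in\mathbb{F}_R^h$.

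Next I would verify three facts: (i) $\{\eta_n\}$ is bounded in $\mathbb{F}_R^h$; (ii) $\|\eta_n\|_{L^2((R,\infty),h\,\ud z)}\ge 1$ for all $n$; (iii) $\eta_n(z)\to 0$ pointwise in $(R,\infty)$. The derivative term in (i) follows directly from \eqref{eq:com}:
\begin{equation*}
\|\eta_n'\|_{L^2((R,\infty),zh\,\ud z)}^2 = \frac{1}{\varepsilon^2 a_n}\int_{r_n-\varepsilon}^{r_n}zh(z)\,\ud z < \frac{1}{\varrho\varepsilon^2}.
\end{equation*}
For the $L^2(h)$-norm, the tail contribution equals $1$, and the contribution from $(r_n-\varepsilon,r_n)$ is controlled by extracting a factor $(r_n-\varepsilon)^{-1}$ from $\int_{r_n-\varepsilon}^{r_n} zh\,\ud z$ and applying \eqref{eq:com} to obtain $\int_{r_n-\varepsilon}^{r_n}h(z)\,\ud z\le a_n/[\varrho(r_n-\varepsilon)]\to 0$. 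Item (ii) is immediate since $\xi_n\equiv 1$ on $(r_n,\infty)$, and item (iii) holds because for each fixed $z\in(R,\infty)$, eventually $r_n-\varepsilon>z$ so that $\eta_n(z)=0$.

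To conclude, suppose some subsequence $\eta_{n_k}\to\eta$ in $L^2((R,\infty),h\,\ud z)$; extracting a further subsequence that converges $h\,\ud z$-a.e., item (iii) forces $\eta\equiv 0$, which contradicts $\|\eta_{n_k}\|_{L^2((R,\infty),h\,\ud z)}\ge 1$ from (ii). Combined with (i), this shows the embedding is not compact. The only delicate point, and the main obstacle to watch, is establishing the upper (not merely lower) bound on $\|\eta_n\|_{L^2((R,\infty),h\,\ud z)}$; this crucially uses the extra factor of $z$ built into the derivative weight, which turns \eqref{eq:com} into a bound on $\int_{r_n-\varepsilon}^{r_n}h\,\ud z$ with a decaying prefactor $(r_n-\varepsilon)^{-1}$.
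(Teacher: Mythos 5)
Your proposal is correct and takes essentially the same approach as the paper: construct normalized cutoff functions supported on $(r_n-\varepsilon,\infty)$ that equal a constant on $(r_n,\infty)$, bound the weighted derivative norm directly by \eqref{eq:com}, and argue that no subsequence can converge in $L^2(h\,\ud z)$ since the norms stay bounded below by $1$ while any limit must vanish. The only cosmetic differences are that you use a piecewise-linear cutoff in place of the paper's smooth $\varphi_n$, and you extract the sharper prefactor $(r_n-\varepsilon)^{-1}$ from the factor $z$ in the derivative weight to get $\|\eta_n\|_{L^2(h)}^2\le 1+\varrho^{-1}(r_n-\varepsilon)^{-1}$, whereas the paper settles for the cruder bound $1+\varrho^{-1}$ (valid once $r_n-\varepsilon\ge 1$); both suffice.
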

\begin{proof}
Denote $ \mathfrak{p}_0(\ud z)=h(z)\ud z$, $ \mathfrak{p}_1(\ud z )=zh(z)\ud z$ and $Q_r=(r,\infty)$ for $r>1$.
Then the condition \eqref{eq:com} can be represented as
\begin{equation}\label{eq:contrary}
\mathfrak{p}_0 ( {Q}_{r_n}) > \varrho\mathfrak{p}_1\{[r_n - \varepsilon,r_n]\}. 
\end{equation}
For each integer $n\ge1$, let us choose a smooth function ${\varphi }_{n}: [0,\infty)\to[0,1]$ such that
${\varphi }_{n} \equiv 1$ on ${Q}_{r_n},{\varphi }_{n} \equiv 0$ on $[0,r_n - \varepsilon]$, and
$\mathop{\sup }\limits_{{n\in {\mathbb{N}}^{ + }}}\left| {\varphi^\prime_{n}}\right| \leq M_\varepsilon < \infty.$
By introducing ${\xi }_{n} := K_n{\varphi }_{n}$ with $K_n = 1/\sqrt{ \mathfrak{p}_0 ( {Q}_{r_n})}$, it follows from \eqref{eq:contrary} that for any $n\in\mathbb{N}^+$ with $r_n-\varepsilon\ge1$,
\begin{align*}
\int_{0}^\infty |{\xi }_{n}^\prime(z)|^2 zh(z)\ud z&=\int_{r_n-\varepsilon}^{r_n}|{\xi }_{n}^\prime(z)|^2 zh(z)\ud z\le K_n^2M_\varepsilon^2\mathfrak{p}_1\{[r_n-\varepsilon,r_n]\}\le M_\varepsilon^2\varrho^{-1},\\
\int_{0}^\infty |{\xi }_{n}(z)|^2 h(z)\ud z&\le K_n^2\mathfrak{p}_0 (Q_{r_n-\varepsilon}) \le K_n^2\left(\mathfrak{p}_0 ( {Q}_{r_n})+\mathfrak{p}_1\{[r_n-\varepsilon,r_n]\}\right)\le 1+\varrho^{-1}.
\end{align*}
Hence $\left\{ {\xi }_{n}\right\}_{n\in\mathbb{N}^+}$ is a bounded
sequence in $\mathbb{F}_R^h$. 
If a subsequence of $\left\{{\xi }_{n}\right\}_{n\in\mathbb{N}^+}$ is convergent, it can only converge to
$\xi \equiv 0$, which gives rise to a contradiction with the fact that $ K_n^{2}{\int }_{{Q}_{r_n}}{\left| {\varphi }_{n}\right| }^{2}h(z)\ud z = 1$
for all $n\in\mathbb{N}^+$. The proof is completed.
\end{proof}

\emph{Proof of Proposition \ref{lem:counterexample}.}
Without loss of generality, we assume that $c_0=1$ for simplicity. We choose $\varepsilon>0$ and $\varrho>0$ such that $(\kappa_1-1)\varepsilon \varrho<\frac12$. This ensures the existence of a constant $R_1>2z_0$ for which
$
(\frac{r}{r-\varepsilon})^{1-\kappa_1}\ge\frac12> (\kappa_1-1)\varepsilon \varrho
$
for any $r>R_1$.
Consequently, for any $r>R_1$, we have
$$\varrho\int_{r-\varepsilon}^r z \vartheta(z)\ud z=\varrho\int_{r-\varepsilon}^r z^{1-\kappa_1}\ud z\le (r-\varepsilon)^{1-\kappa_1}\varepsilon \varrho< \frac{r^{1-\kappa_1}}{\kappa_1-1}=\int_r^\infty \vartheta(z)\ud z.$$
Then Lemma \ref{lem:non-compact} yields that for $R>2z_0$,
 the embedding $\mathbb{F}_{R}^\vartheta\hookrightarrow L^{2}((R,\infty);\vartheta\ud z)$ is not compact. Specifically, for any $R>2z_0$,
 from the proof of Lemma \ref{lem:non-compact}, we can find a sequence $\{\xi_{n,R}\}_{n\in\mathbb{N}^+}\subset \mathcal{C}^\infty_0([R,\infty))$ which is bounded in $\mathbb{F}_R^\vartheta$ but is not pre-compact in $L^2((R,\infty),\vartheta\ud z)$.

 For a function $\xi:[R,\infty]\to \R$ with $R>2z_0$, we define $(\xi)_R^\Gamma:\Gamma\to \R$ via $(\xi)_R^{\Gamma}(z,k)=0$ for $(z,k)\in\cup_{k=1}^{m-1}I_k$, $(\xi)_R^{\Gamma}(z,m)=0$ for $z<R$, and $(\xi)_R^{\Gamma}(z,m)=\xi(z)$ for $z\ge R$. The function $(\xi)_R^\Gamma$ may be regarded as the zero extension of $\xi$ to the graph $\Gamma$; it coincides with $\xi$ on the $m$-th edge for $z\ge R$, and is zero elsewhere.
From the asymptotic behaviors of $\alpha_k$ and $T_k$ near $O_\infty$, as given in \eqref{eq:A} and \eqref{eq:T}, respectively, we can fix a
 sufficiently large $R>2z_0$ such that there exist constants $C_i:=C_i(R)>0$, $i=1,2,3,4$ satisfying $C_1 z\le \alpha_k(z)\le C_2 z$ and $C_3\le T_k(z)\le C_4$ for all $z\ge R$. 
Then comparing the definitions of $\bar{\mathbb{W}}^{1,2}_\gamma$ and $\mathbb{F}_{R}^\vartheta$, we conclude that there exist some constants $C_i=C_i(R)$, $i=5,6$, such that for any $\xi\in \mathcal{C}_0^\infty([R,\infty))$,
\begin{gather*}
C_5^{-1}\|(\xi)_{R}^\Gamma\|_{\bar{H}_\gamma}\le \|\xi\|_{L^{2}((R,\infty);\vartheta\ud z)}\le C_5\|(\xi)_{R}^\Gamma\|_{\bar{H}_\gamma},\\
 C_6^{-1}\|(\xi)_{R}^\Gamma\|_{\bar{\mathbb{W}}^{1,2}_\gamma}\le \|\xi\|_{\mathbb{F}_{R}^\vartheta}\le C_6\|(\xi)_{R}^\Gamma\|_{\bar{\mathbb{W}}^{1,2}_\gamma}.
\end{gather*}
These equivalences imply that the sequence $\{(\xi_{n,R})_{R}^{\Gamma}\}_{n\in\mathbb{N}^+}$ is bounded in $\bar{\mathbb{W}}^{1,2}_\gamma$ but it is not pre-compact in $\bar{H}_\gamma$, where $(\xi_{n,R})_{R}^{\Gamma}$ denotes the zero extension of $\xi_{n,R}$ to the graph $\Gamma$.
 The proof is completed.
\hfill$\square$

\section{Proof of main results}\label{S:Pf}
In this section, we present the proofs of Theorems \ref{thm:LDP-graph}, \ref{thm:MDP-graph}, and \ref{thm:LDP}, following the weak convergence approach for LDP (see, e.g., \cite{BD00} and \cite{MSZ21}). For ${N>0}$, we denote
\begin{gather*}
S_N:=\left\{\phi \in L^{2}(0, T; \mathcal{U}_0 )\mid \int_{0}^{T}\|\phi(s)\|_{\mathcal U_0}^{2} \mathrm{~d} s \leq N\right\},\\
\mathcal{A}_{N}:=\left\{v:[0,T]\times \Omega\to \mathcal{U}_0 \mid v \text{ is } \{\mathscr{F}_{t}\} \text{-predictable and } v(\omega) \in S_{N} \text{ for } \mathbb{P}\text{-a.s. }\omega \right\}.
\end{gather*}
Unless otherwise specified, $S_N$ is endowed with the weak topology such that it is a Polish space. 
Note that
the sample path of $\mathcal{W}$ take values in $\mathcal{C}([0,T];\mathcal{U}_1)$ almost surely, where $\mathcal{U}_1$ is another Hilbert space such that the embedding $\mathcal{U}_0\subset \mathcal{U}_1$ is Hilbert--Schmidt. 
\begin{proposition}\cite[Theorem 3.2]{MSZ21} 
\label{prop:criterionMSZ}
Let $\mathcal E$ be a Polish space with the metric $\rho_{\mathcal E}$ and let $\{\zeta(\epsilon)\}_{\epsilon>0}\subset (0,\infty)$ satisfy that $\lim_{\epsilon\to 0}\zeta(\epsilon)=0$. For each $\epsilon \ge 0$, assume that $\mathcal{G}^{\epsilon}: \mathcal C([0, T], \mathcal{U}_1) \rightarrow \mathcal{E}$ is a measurable map and the following \textsl{Condition A} or \textsl{Condition A${^\prime}$} holds:

\textsl{Condition A.} For every $N >0$, the set $\left\{\mathcal{G}^{0}\left(\int_{0}^\cdot \phi(s) \mathrm{d} s\right): \phi\in S_{N}\right\}$ is a compact subset of $\mathcal{E}$;
For every $N >0$, if $\{v^{\epsilon}\}_{\epsilon>0} \subset \mathcal{A}_{N}$ converges in distribution (as $S_{N}$-valued random elements) to $v$, then $$\mathcal{G}^{\epsilon}\left(\mathcal{W}(\cdot)+\frac{1}{\sqrt{\zeta(\epsilon)}} \int_{0}^\cdot v^{\epsilon}(s) \mathrm{d} s\right)\rightarrow\mathcal{G}^{0}\left(\int_{0}^\cdot v(s) \mathrm{d} s\right)\quad \mbox{in distribution as } \epsilon\to 0.$$

\textsl{Condition A${^\prime}$.} For every $N >0$, if $\{\phi^{\epsilon}\}_{\epsilon>0} \subset S_{N}$ converges weakly to $\phi$, then as $\epsilon\to 0$,
		$\mathcal{G}^0\left(\int_{0}^\cdot \phi^{\epsilon}(s) \mathrm{d} s\right)\rightarrow\mathcal{G}^{0}\left(\int_{0}^\cdot \phi(s) \mathrm{d} s\right)$ in $\mathcal{E}$;
	 For every $N >0$, any family $\{v^\epsilon\}_{\epsilon>0}\subset \mathcal{A}_N$ and any $\delta_0>0$,
$$\lim_{\epsilon\to 0}\mathbb{P}\left\{\rho_{\mathcal{E}}\Big(\mathcal{G}^\epsilon(\mathcal{W}(\cdot)+\frac{1}{\sqrt{\zeta(\epsilon)}}\int_0^{\cdot}v^\epsilon(s)\ud s),\mathcal{G}^0(\int_0^\cdot v^\epsilon(s)\ud s)\Big)>\delta_0 \right\}=0.$$
		Then $\{\mathcal{G}^{\epsilon}(\mathcal{W})\}_{\epsilon>0}$ satisfies an LDP on $\mathcal{E}$ as $\epsilon\to 0$ with the speed $\zeta(\epsilon)^{-1}$ and the good rate function $I$ given by
	\begin{equation*}
		I(x)=\inf_{\left\{\phi\in L^2(0,T;\mathcal{U}_0),\,x=\mathcal{G}^0\left(\int_0^\cdot \phi(s)\ud s\right)\right\}}\frac12 \int_0^T\|\phi(s)\|_{\mathcal{U}_0}^2\ud s,\quad x\in\mathcal E.
	\end{equation*}
\end{proposition}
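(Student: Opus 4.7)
The standard strategy is to reduce the LDP on the Polish space $\mathcal{E}$ to the equivalent Laplace principle (valid precisely because the target rate function will be shown to be good), and then apply the Bou\'e--Dupuis variational representation to convert the exponential moment into an infimum over admissible controls. Concretely, for any $h\in C_b(\mathcal{E})$ one has
\begin{equation*}
-\zeta(\epsilon)\log \E\Big[e^{-\tfrac{1}{\zeta(\epsilon)}h(\mathcal{G}^\epsilon(\mathcal{W}))}\Big]
= \inf_{v\in\bigcup_N \mathcal{A}_N} \E\Big[\tfrac12\int_0^T\|v(s)\|_{\mathcal{U}_0}^2\,\ud s + h\Big(\mathcal{G}^\epsilon\Big(\mathcal{W}+\tfrac{1}{\sqrt{\zeta(\epsilon)}}\int_0^\cdot v\,\ud s\Big)\Big)\Big],
\end{equation*}
so it suffices to show that this infimum tends to $\inf_{x\in\mathcal{E}}[h(x)+I(x)]$ and that $I$ has compact level sets.

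\textbf{Goodness of $I$.} The key observation is that $\{x\in\mathcal{E}:I(x)\le N/2\}\subset \{\mathcal{G}^0(\int_0^\cdot \phi\,\ud s):\phi\in S_N\}$. Hence the compactness assertion in the first part of Condition A (or in Condition A$'$, via a continuous-image argument) immediately gives pre-compactness of every sublevel set. Lower semi-continuity of $I$ follows from the weak compactness of $S_N$, the weak l.s.c.\ of $\phi\mapsto \tfrac12\|\phi\|_{L^2}^2$, and the continuity of $\phi\mapsto \mathcal{G}^0(\int_0^\cdot \phi\,\ud s)$ on $S_N$.

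\textbf{Upper and lower Laplace bounds.} For the upper bound, fix $\eta>0$ and pick a deterministic $\phi\in S_N$ (with $N$ depending only on $\|h\|_\infty$) such that $h(\mathcal{G}^0(\int_0^\cdot\phi\,\ud s))+\tfrac12\|\phi\|_{L^2}^2$ is within $\eta$ of $\inf_x[h(x)+I(x)]$. Plug the constant control $v^\epsilon\equiv \phi$ into the variational representation; the second part of Condition A (distributional convergence) or of Condition A$'$ (convergence in probability) together with bounded convergence yields $\limsup_{\epsilon\to 0}[-\zeta(\epsilon)\log \E[\cdots]]\le \inf_x[h(x)+I(x)]+\eta$. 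For the lower bound, select nearly-optimal controls $v^\epsilon$; since $h$ is bounded, the variational identity forces a uniform $L^2$-energy bound on $v^\epsilon$, so $v^\epsilon\in \mathcal{A}_N$ for a fixed $N$. Tightness of $\{v^\epsilon\}$ in the weakly compact set $S_N$ allows extraction of a subsequence converging in distribution to some $v$; Condition A (resp.\ A$'$) promotes this to joint weak convergence of the controlled functional $\mathcal{G}^\epsilon(\mathcal{W}+\tfrac{1}{\sqrt{\zeta(\epsilon)}}\int_0^\cdot v^\epsilon\,\ud s)$ to $\mathcal{G}^0(\int_0^\cdot v\,\ud s)$, after which Fatou's lemma and the convex weak l.s.c.\ of $v\mapsto\tfrac12 \E\int_0^T\|v\|_{\mathcal{U}_0}^2\,\ud s$ yield the matching $\liminf$.

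\textbf{Main difficulty.} The delicate step is the lower bound: one must establish the joint tightness of the pair $(v^\epsilon,\mathcal{G}^\epsilon(\mathcal{W}+\tfrac{1}{\sqrt{\zeta(\epsilon)}}\int_0^\cdot v^\epsilon\,\ud s))$ on $S_N\times \mathcal{E}$ and, via Skorohod's representation, identify the limit so that the limiting control $v$ is predictable with respect to an appropriate filtration and the limiting functional is indeed $\mathcal{G}^0(\int_0^\cdot v\,\ud s)$. Simultaneously passing to the limit in the energy term and in $h$ then requires either the distributional convergence hypothesis of Condition A (which is designed precisely for this identification) or, in Condition A$'$, a pathwise approximate continuity of $\mathcal{G}^\epsilon$ under Girsanov-type shifts combined with the uniform moment bound; in both cases, the role of the compactness hypothesis is to prevent mass from escaping along the subsequence.
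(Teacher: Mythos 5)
The paper does not prove this proposition; it is quoted verbatim from \cite{MSZ21} (Theorem~3.2), so there is no in-paper argument to compare against. Your sketch reconstructs the standard Bou\'e--Dupuis/Budhiraja--Dupuis weak convergence proof (Laplace principle equivalence, variational representation, compactness of level sets, and the two one-sided bounds), which is indeed how the cited theorem is established; the MSZ refinement is precisely that \textsl{Condition~A$'$} can replace \textsl{Condition~A}. Your overall structure is sound, and both conditions are correctly threaded through the upper and lower bounds.

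There is one genuine gap. In the lower Laplace bound you write that ``since $h$ is bounded, the variational identity forces a uniform $L^2$-energy bound on $v^\epsilon$, so $v^\epsilon\in\mathcal{A}_N$ for a fixed $N$.'' This does not follow: plugging near-optimal controls into the variational representation yields only $\E\big[\tfrac12\int_0^T\|v^\epsilon(s)\|_{\mathcal{U}_0}^2\,\ud s\big]\le 2\|h\|_\infty+\eta$, a bound \emph{in expectation}, whereas membership in $\mathcal{A}_N$ requires the pathwise, almost-sure bound $\int_0^T\|v^\epsilon\|_{\mathcal{U}_0}^2\,\ud s\le N$. The standard repair is the truncation/stopping-time step: set $\tau_N^\epsilon:=\inf\{t:\int_0^t\|v^\epsilon\|^2\,\ud s\ge N\}\wedge T$ and $v^{\epsilon,N}:=v^\epsilon\mathbb{I}_{[0,\tau_N^\epsilon]}$, so that $v^{\epsilon,N}\in\mathcal{A}_N$; by Markov's inequality $\P(\tau_N^\epsilon<T)\le C/N$ uniformly in $\epsilon$, and since $h$ is bounded the cost of replacing $v^\epsilon$ by $v^{\epsilon,N}$ is $O(1/N)$, uniformly in $\epsilon$. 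One then passes $N\to\infty$ after the $\epsilon$-limit. Without this step the tightness/Skorohod argument has nothing to operate on (the weak topology on the ball $S_N$ is only available once the controls are a.s.\ confined to it). A second, smaller point: the inclusion $\{I\le N/2\}\subset\{\mathcal{G}^0(\int_0^\cdot\phi):\phi\in S_N\}$ that you invoke for compactness of level sets relies on the infimum in the definition of $I$ being attained; attainment itself follows from weak compactness of $S_{N'}$ plus the continuity of $\phi\mapsto\mathcal{G}^0(\int_0^\cdot\phi)$ on $S_{N'}$, i.e.\ the same ingredients you later use for lower semicontinuity, so this should be made explicit rather than called an immediate observation.
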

We remark that although \textsl{Condition A$^\prime$}
is stronger than \textsl{Condition A}, it is often easier to verify in certain cases. In particular, \textsl{Condition A$^\prime$} (resp. \textsl{Condition A}) will be used in proving Theorems \ref{thm:LDP-graph} and \ref{thm:MDP-graph} (resp. Theorem \ref{thm:LDP}).

\subsection{Compactness}\label{S:skeleton}
In this subsection, we present some quantitative properties of the skeleton equations \eqref{eq.skeleton-eq} and \eqref{eq:Nskeleton}. 
Define a convolution operator $\bar{\Xi}$ via
\begin{align*}
	\bar{\Xi}(y)(t) := \int_0^t \bar{S}(t-s)y(s)^\wedge\ud s, \qquad t\in[0,T],\quad y\in L^2(0,T; H_\gamma ).
\end{align*}
By \eqref{Stbound} and the Cauchy--Schwarz inequality, for any $y\in L^2(0,T; H_\gamma )$,
\begin{equation}\label{eq:Xibound}
	\|\bar{\Xi}(y)(t)\|_{\bar{H}_\gamma}\le C\int_0^t \|y(s)^\wedge\|_{\bar{H}_\gamma}\ud s\le C\|y\|_{L^2(0,T; H_\gamma )}\quad\forall~ t\in[0,T]
\end{equation}
which indicates that $\bar{\Xi}:L^2(0,T; H_\gamma )\to \mathcal C([0,T],\bar{H}_{\gamma})$ is a bounded linear operator.

\begin{lemma}\label{lemma:finite}
(1) Under Assumption \ref{asp:gamma}, for any fixed $N >0$, the set
	$\{\bar{\Xi}(y):\|y\|_{L^2(0,T; H_{\gamma})}\le N\}$ is a subset of $ \Lambda_{M,{\gamma}}$ (see \eqref{eq:LambdaM}) for some $M >0$.\\
(2) Under Assumption \ref{asp:compact}, $\bar{\Xi}$ is a compact operator from $L^2(0,T;\! H_{\gamma_1} )$ to $\mathcal C([0,T],\!\bar{H}_{\gamma})$.
 \end{lemma}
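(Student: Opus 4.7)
For part (1), given $y\in L^2(0,T;H_\gamma)$ with $\|y\|_{L^2(0,T;H_\gamma)}\le N$, I set $\bar\Phi:=\bar\Xi(y)$, which is the mild solution of $\partial_t\bar\Phi=\bar L\bar\Phi+y^\wedge$ with $\bar\Phi(0)=0$. The estimate $\sup_t\|\bar\Phi(t)\|_{\bar H_\gamma}\le CN$ is already provided by \eqref{eq:Xibound} together with the contractivity \eqref{fvarphi}. The two remaining ingredients needed to conclude $\bar\Phi\in \Lambda_{M,\gamma}$ are $\sup_t\sum_k\int_{I_k}\alpha_k\gamma(\bar\Phi'(t))^2\,dz$ and $\int_0^T\|\partial_t\bar\Phi(s)\|_{\bar H_\gamma}^2\,ds$. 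I plan to extract both from a single parabolic energy identity obtained by pairing the evolution equation with $\partial_t\bar\Phi$ in $\bar H_\gamma$.

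To carry this out I integrate by parts on each edge $I_k$ to rewrite $\langle \bar L\bar\Phi,\partial_t\bar\Phi\rangle_{\bar H_\gamma}$. Summing over edges, the boundary contributions at each interior vertex cancel by the Kirchhoff gluing condition \eqref{eq:glue} satisfied by $\bar\Phi$ (using continuity of $\partial_t\bar\Phi$ and of $\gamma$ across incident edges), and the contribution at $O_\infty$ vanishes by integrability of the weights. What remains equals $-\tfrac14\tfrac{d}{dt}\sum_k\int_{I_k}\alpha_k\gamma(\bar\Phi')^2\,dz - \tfrac12\sum_k\int_{I_k}\alpha_k\gamma'\bar\Phi'\,\partial_t\bar\Phi\,dz$. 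The cross term I control via Assumption \ref{asp:gamma}: factoring $\alpha_k\gamma'\bar\Phi'\,\partial_t\bar\Phi=(\alpha_k\gamma)^{1/2}\bar\Phi'\cdot(\alpha_k|\gamma'|^2/\gamma)^{1/2}\partial_t\bar\Phi$ and using $\alpha_k|\gamma'|^2/\gamma\le CT_k\gamma$, Young's inequality absorbs a fraction of $\|\partial_t\bar\Phi\|_{\bar H_\gamma}^2$ and leaves a term comparable to $\sum_k\int_{I_k}\alpha_k\gamma(\bar\Phi')^2\,dz$. Treating $\langle y^\wedge,\partial_t\bar\Phi\rangle$ by Cauchy--Schwarz in the same fashion produces an ODE inequality for $E(t):=\sum_k\int_{I_k}\alpha_k\gamma(\bar\Phi'(t))^2\,dz$. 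Since $E(0)=0$, Gronwall gives $\sup_t E(t)+\int_0^T\|\partial_t\bar\Phi(s)\|_{\bar H_\gamma}^2\,ds\le C\|y\|_{L^2(0,T;H_\gamma)}^2$, hence $\bar\Phi\in\Lambda_{M,\gamma}$ for some $M=M(N,T)$.

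For part (2), the hypothesis Assumption \ref{asp:compact}(i) asserts that Assumption \ref{asp:gamma} and \eqref{eq:IkTk} both hold with $\gamma_1$ in place of $\gamma$, so the argument of part (1) applies verbatim with $\gamma$ replaced by $\gamma_1$ to yield $\bar\Xi(\{y:\|y\|_{L^2(0,T;H_{\gamma_1})}\le N\})\subset \Lambda_{M,\gamma_1}$. Assumption \ref{asp:compact}(ii) then states that $\Lambda_{M,\gamma_1}$ is pre-compact in $\mathcal C([0,T];\bar H_\gamma)$, so the image of any bounded ball under $\bar\Xi$ is pre-compact there; combined with the linearity of $\bar\Xi$, this establishes compactness.

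The main technical obstacle is justifying the formal energy identity, since a mild solution $\bar\Phi$ is not a priori smooth enough for the edgewise integration by parts to be meaningful, and $\partial_t\bar\Phi$ may fail to lie in $\bar H_\gamma$ pointwise in $t$. I would resolve this by density, approximating $y$ by smooth $y^n$ (for instance via mollification in time) whose corresponding $\bar\Phi^n$ are classical, so that the gluing condition and the derivative-traces at vertices are well defined; I perform the estimate on these approximants to obtain uniform bounds and then pass to the limit, using that the constant $M=CN^2$ is stable under this passage and that $\Lambda_{M,\gamma}$ is closed in $\mathcal C([0,T];\bar H_\gamma)$.
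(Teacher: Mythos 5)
Your proof is correct and follows essentially the same route as the paper: a parabolic energy estimate obtained by testing the equation with $\partial_t\bar\Phi$ (equivalently, differentiating the edge-weighted Dirichlet energy in $t$), edgewise integration by parts with the Kirchhoff gluing condition cancelling vertex contributions, Young's inequality together with Assumption \ref{asp:gamma} to absorb the cross term arising from $\gamma'$, and Gronwall; part (2) is then immediate from Assumption \ref{asp:compact}(ii). The density/regularization remark you add to justify the formal manipulations is a sound technical refinement the paper leaves implicit, but it does not change the substance of the argument.
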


\begin{proof}
	(1) For the sake of simplicity, we denote in this proof $\bar{\Phi}:=\bar{\Xi}(y)$ and $$\bar{\Psi}(t):=\frac{1}{2}\sum_{k=1}^m\int_{I_k}\alpha_k(z)|\partial_z\bar{\Phi}(t,z,k)|^2\gamma(z,k)\ud z,\quad t\in[0,T],$$
	where the explicit dependence of $\bar{\Phi}$ and $\bar{\Psi}$ upon $y$ are omitted for simplicity. Since $\partial_t\bar{\Phi}(t)=\bar{L}\bar{\Phi}(t)+y(t)^\wedge$, $t\in[0,T]$,
	integrating by parts, together with \eqref{eq:barL} and the gluing condition \eqref{eq:glue} yields that for any $t\in[0,T]$,	\begin{align*}
		\frac{\ud}{\ud t}\bar{\Psi}(t)&=\sum_{k=1}^m\int_{I_k}\alpha_k(z)\gamma(z,k)\partial_z\bar{\Phi}(t,z,k)\partial_z\partial_t\bar{\Phi}(t,z,k)\ud z\\\notag		&=-2\sum_{k=1}^m\int_{I_k}T_k(z)\gamma(z,k)\bar{L}\bar{\Phi}(t,z,k)\partial_t\bar{\Phi}(t,z,k)\ud z\\\notag
		&\quad-\sum_{k=1}^m\int_{I_k}\alpha_k(z)\frac{\ud\gamma}{\ud z}(z,k)\partial_z\bar{\Phi}(t,z,k)\partial_t\bar{\Phi}(t,z,k)\ud z\\\notag
		&=-2\sum_{k=1}^m\int_{I_k}T_k(z)\gamma(z,k)|\partial_t\bar{\Phi}(t,z,k)|^2\ud z\\\notag &\quad+2\sum_{k=1}^m\int_{I_k}T_k(z)\gamma(z,k)y(t)^\wedge(z,k)\partial_t\bar{\Phi}(t,z,k)\ud z\\\notag
		&\quad-\sum_{k=1}^m\int_{I_k}\alpha_k(z)\frac{\ud\gamma}{\ud z}(z,k)\partial_z\bar{\Phi}(t,z,k)\partial_t\bar{\Phi}(t,z,k)\ud z.
			\end{align*}
Furthermore, the Young inequality leads to 
	\begin{align}\label{eq:Psi}
\frac{\ud}{\ud t}\bar{\Psi}(t)&\le -\sum_{k=1}^m\int_{I_k}T_k(z)\gamma(z,k)|\partial_t\bar{\Phi}(t,z,k)|^2\ud z\\\notag
		&\quad+2\sum_{k=1}^m\int_{I_k}T_k(z)|y(t)^\wedge(z,k)|^2\gamma(z,k)\ud z\\\notag	&\quad+\frac12\sum_{k=1}^m\int_{I_k}\frac{1}{T_k(z)\gamma^2(z,k)}\alpha^2_k(z)|\frac{\ud\gamma}{\ud z}(z,k)|^2\gamma(z,k)|\partial_z\bar{\Phi}(t,z,k)|^2\ud z.
	\end{align}
	Here, the last term on the right-hand side of \eqref{eq:Psi} is bounded by a multiple of $\bar{\Psi}(t)$ due to Assumption \ref{asp:gamma}. Consequently, there exists some constant $C>0$ such that for any $t\in[0,T]$,
	\begin{align*}
		&\frac{\ud}{\ud t}\bar{\Psi}(t)
		+\sum_{k=1}^m\int_{I_k}T_k(z)\gamma(z,k)|\partial_t\bar{\Phi}(t,z,k)|^2\ud z\le2\|y(t)^\wedge\|_{\bar{H}_{\gamma}}^2+C\bar{\Psi}(t).
	\end{align*}
	Applying the Gronwall inequality and using $\bar{\Phi}(0)=0$ (which implies $\bar{\Psi}(0)=0$) gives
	\begin{align*}
		&\sup_{t\in[0,T]}\bar{\Psi}(t)
		+\int_0^T\sum_{k=1}^m\int_{I_k}T_k(z)\gamma(z,k)|\partial_t\bar{\Phi}(t,z,k)|^2\ud z\ud t\le C\|y\|_{L^2(0,T; H_{\gamma} )}^2.
	\end{align*}
Combining this with \eqref{eq:Xibound} completes the proof of the first claim of the lemma.

(2) Under Assumption \ref{asp:compact}(i), 
for a fixed $N >0$, the set
	$\{\bar{\Xi}(y):\|y\|_{L^2(0,T; H_{\gamma_1})}\le N\}$ is contained in $ \Lambda_{M,{\gamma_1}}$ for some $M >0$, and is therefore a pre-compact set of $\mathcal C([0,T],\bar{H}_{\gamma})$ due to Assumption \ref{asp:compact}(ii). This proves the compactness of $\bar{\Xi}$ from $L^2(0,T; H_{\gamma_1} )$ to $\mathcal C([0,T],\bar{H}_{\gamma})$.
\end{proof}

\begin{remark}\label{rem:strong-continuity}
The map $\phi\mapsto \bar{Z}^\phi$ is continuous from $L^2(0,T;\mathcal{U}_0)$ to $\mathcal{C}([0,T];\bar{H}_\gamma)$.
To illustrate this fact, we fix $\phi_1\in L^2(0,T;\mathcal{U}_0)$ and let $\phi_2\in L^2(0,T;\mathcal{U}_0)$ with $\|\phi_1-\phi_2\|_{L^2(0,T;\mathcal{U}_0)}\le 1$. Then it follows from \eqref{eq:ut} and \eqref{Stbound} that for any $t\in[0,T]$,
	\begin{align*}
		\|\bar{Z}^{\phi_1}(t)-\bar{Z}^{\phi_2}(t)\|_{\bar{H}_\gamma}&\le \int_0^t\|B(\bar{Z}^{\phi_1}(s))-B(\bar{Z}^{\phi_2}(s))\|_{\bar{H}_\gamma}\ud s\\
		&\quad+ \int_0^t\|(G(\bar{Z}^{\phi_1}(s))-G(\bar{Z}^{\phi_2}(s)))\phi_2(s)^\wedge\|_{\bar{H}_\gamma}\ud s\\
		&\quad+\int_0^t\|G(\bar{Z}^{\phi_1}(s))(\phi_1(s)^\wedge-\phi_2(s)^\wedge)\|_{\bar{H}_\gamma}\ud s.
	\end{align*}
	Utilizing the Cauchy--Schwarz inequality, the Lipschitz continuity of $B$ and $G$ in \eqref{eq:Glip}, as well as the linear growth of $G$ in \eqref{eq:Glingr}, one has 
	\begin{align*}
		\|\bar{Z}^{\phi_1}(t)-\bar{Z}^{\phi_2}(t)\|^2_{\bar{H}_\gamma}&\le C(1+\|\phi_2\|_{L^2(0,T;\mathcal U_0)}^2)\int_0^t\|\bar{Z}^{\phi_1}(s)-\bar{Z}^{\phi_2}(s)\|_{\bar{H}_\gamma}^2\ud s\\
		&\quad+C\int_0^t(1+\|\bar{Z}^{\phi_1}(s)\|^2_{\bar{H}_\gamma})\ud s\|\phi_1-\phi_2\|_{L^2(0,T;\mathcal U_0)}^2\\
		&\le C(1+\|\phi_1\|_{L^2(0,T;\mathcal U_0)}^2)\int_0^t\|\bar{Z}^{\phi_1}(s)-\bar{Z}^{\phi_2}(s)\|_{\bar{H}_\gamma}^2\ud s\\
		&\quad+C(\|\phi_1\|_{L^2(0,T;\mathcal{U}_0)},\| \mathfrak{u} \|_{ H_\gamma },T)\|\phi_1-\phi_2\|_{L^2(0,T;\mathcal U_0)}^2
	\end{align*}
	where in the last step we used \eqref{eq:Zvt} and $\|\phi_1-\phi_2\|_{L^2(0,T;\mathcal{U}_0)}\le 1$. Therefore, similar to the proof of \eqref{eq:Zvt}, we derive by using the Gronwall inequality that
	\begin{align*}
		\|\bar{Z}^{\phi_1}-\bar{Z}^{\phi_2}\|^2_{\mathcal{C}([0,T];\bar{H}_\gamma)}&\le C(\|\phi_1\|_{L^2(0,T;\mathcal{U}_0)},\| \mathfrak{u} \|_{ H_\gamma },T)\|\phi_1-\phi_2\|_{L^2(0,T;\mathcal U_0)}^2,
	\end{align*}
	which shows that the map $\phi\mapsto \bar{Z}^\phi$ is continuous from $L^2(0,T;\mathcal{U}_0)$ to $\mathcal{C}([0,T];\bar{H}_\gamma)$.
\end{remark}

We next prove the continuity of the solution maps for the skeleton equations \eqref{eq.skeleton-eq} and \eqref{eq:Nskeleton} from $L_{weak}^2(0,T;\mathcal{U}_0)$ to $\mathcal{C}([0,T];\bar{H}_\gamma)$, where $L_{weak}^2(0,T;\mathcal{U}_0)$ denotes the space $L^2(0,T;\mathcal{U}_0)$ endowed with the weak convergence. 

\begin{lemma}\label{lem:compact}
Assume that $b$ and $g$ are globally Lipschitz continuous and $\mathfrak{u}\in H_\gamma\cap H_{\gamma_1}$.
Let Assumptions \ref{asp:gamma} and \ref{asp:compact} hold.
If $\{\phi^{n}\}_{n\in\mathbb{N}^+} \subset S_{N}$ converges weakly to $\phi$ as $n\to \infty$,
then $\bar{Z}^{\phi_n}$ (resp. $\bar{R}^{\phi_n}$) converges in $\mathcal C([0,T],\bar{H}_\gamma)$ to $\bar{Z}^{\phi}$ (resp. $\bar{R}^{\phi}$) as $n\to \infty$.
\end{lemma}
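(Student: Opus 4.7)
The plan is to control $\bar{Z}^{\phi_n}-\bar{Z}^\phi$ in $\mathcal{C}([0,T];\bar{H}_\gamma)$ via the Gronwall inequality, after isolating a single forcing term that depends only on $\phi_n-\phi$ and handling it through a compactness argument built on Lemma \ref{lemma:finite}(2). The key role of the auxiliary graph weight $\gamma_1$ in Assumption \ref{asp:compact} is precisely to convert the weak convergence $\phi_n-\phi\rightharpoonup 0$ into uniform convergence of the associated convolution.

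Using the mild formulation \eqref{eq:ut}, decompose $\bar{Z}^{\phi_n}(t)-\bar{Z}^\phi(t)=I_n^1(t)+I_n^2(t)+I_n^3(t)$, where
\begin{align*}
I_n^1(t) &:= \int_0^t \bar{S}(t-s)\bigl[B(\bar{Z}^{\phi_n}(s))-B(\bar{Z}^\phi(s))\bigr]\,\ud s, \\
I_n^2(t) &:= \int_0^t \bar{S}(t-s)\bigl[G(\bar{Z}^{\phi_n}(s))-G(\bar{Z}^\phi(s))\bigr]\phi_n(s)^\wedge\,\ud s, \\
I_n^3(t) &:= \int_0^t \bar{S}(t-s)\,G(\bar{Z}^\phi(s))(\phi_n(s)-\phi(s))^\wedge\,\ud s.
\end{align*}
Invoking \eqref{Stbound}, the Lipschitz bound \eqref{eq:Glip}, the Cauchy--Schwarz inequality, and $\phi_n\in S_N$ yields the estimate
$$\sup_{t\in[0,t_0]}\bigl(\|I_n^1(t)\|_{\bar{H}_\gamma}^2+\|I_n^2(t)\|_{\bar{H}_\gamma}^2\bigr)\le C_N\int_0^{t_0}\|\bar{Z}^{\phi_n}-\bar{Z}^\phi\|_{\mathcal{C}([0,s];\bar{H}_\gamma)}^2\,\ud s.$$

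The heart of the argument is to prove $\|I_n^3\|_{\mathcal{C}([0,T];\bar{H}_\gamma)}\to 0$. Using the identity $(f^\vee\varphi)^\wedge=f\varphi^\wedge$ with $f=g\circ\bar{Z}^\phi(s)$ and $\varphi=\psi(s)$, one has $G(\bar{Z}^\phi(s))(\psi(s)^\wedge)=\widetilde\psi(s)^\wedge$ for $\widetilde\psi(s)(x):=g(\bar{Z}^\phi(s)^\vee(x))\psi(s)(x)$, giving the factorization $I_n^3=\bar{\Xi}\circ K(\phi_n-\phi)$ with $K\colon\psi\mapsto\widetilde\psi$. Since $\mathfrak{u}^\wedge\in\bar{H}_{\gamma_1}$ and Assumption \ref{asp:compact}(i) provides \eqref{Stbound} with $\gamma_1$ in place of $\gamma$ (via Lemma \ref{lem:St}), the skeleton equation is well-posed in $\mathcal{C}([0,T];\bar{H}_{\gamma_1})$ as well, so $\bar{Z}^\phi$ lies in this space. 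Combined with the linear growth of $g$, the continuous embedding $\mathcal{U}_0\hookrightarrow L^\infty(D)$ supplied by \eqref{eq:ei}, and $\gamma_1^\vee\in L^1(D)$, this shows that $K$ is a bounded linear map from $L^2(0,T;\mathcal{U}_0)$ to $L^2(0,T;H_{\gamma_1})$. By Lemma \ref{lemma:finite}(2), $\bar{\Xi}$ is compact from $L^2(0,T;H_{\gamma_1})$ to $\mathcal{C}([0,T];\bar{H}_\gamma)$, so $\bar{\Xi}\circ K$ is compact, and the weak convergence $\phi_n-\phi\rightharpoonup 0$ in $L^2(0,T;\mathcal{U}_0)$ forces $I_n^3\to 0$ strongly in $\mathcal{C}([0,T];\bar{H}_\gamma)$.

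Combining the above estimates and applying the Gronwall inequality yields $\bar{Z}^{\phi_n}\to\bar{Z}^\phi$ in $\mathcal{C}([0,T];\bar{H}_\gamma)$. The analogous claim for $\bar{R}^{\phi_n}$ follows from the same template applied to \eqref{eq:Nskeleton}: the drift operator $\mathcal{D}B(\bar{u}^0(s))$ is uniformly bounded on $\bar{H}_\gamma$ (since $b'$ is bounded whenever $b$ is Lipschitz, cf.\ \eqref{eq:B}), and the forcing $\int_0^t\bar{S}(t-s)G(\bar{u}^0(s))(\phi_n(s)-\phi(s))^\wedge\,\ud s$ is handled by the identical factorization and compactness argument with $\bar{u}^0$ replacing $\bar{Z}^\phi$, using that $\bar{u}^0\in\mathcal{C}([0,T];\bar{H}_{\gamma_1})$ by the same well-posedness. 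The main obstacle is the factorization step: one must route the convolution through the strictly more regular weighted space $H_{\gamma_1}$ to activate Lemma \ref{lemma:finite}(2), which is precisely the technical reason Assumption \ref{asp:compact} distinguishes $\gamma_1$ from $\gamma$.
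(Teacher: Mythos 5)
Your proof is correct and follows essentially the same route as the paper's: the same decomposition of $\bar{Z}^{\phi_n}-\bar{Z}^\phi$ into a Lipschitz-controlled part absorbed by Gronwall and a single forcing term $I_n^3=\bar{\Xi}\circ K(\phi_n-\phi)$, and the same conversion of weak convergence to strong convergence by routing the multiplication operator $K\colon\psi\mapsto g\bigl((\bar{Z}^\phi)^\vee\bigr)\psi$ through $L^2(0,T;H_{\gamma_1})$ and invoking the compactness of $\bar{\Xi}$ from Lemma \ref{lemma:finite}(2). One cosmetic imprecision: a globally Lipschitz $b$ need not be differentiable, so the parenthetical ``$b'$ is bounded whenever $b$ is Lipschitz'' should be read as a consequence of the mean value theorem once $b\in C^1$ is assumed (as it implicitly must be for $\bar{R}^\phi$ to be defined, and as it is in Theorem \ref{thm:MDP-graph}).
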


\begin{proof}
We only prove the convergence of $\bar{Z}^{\phi_n}$ since that of $\bar{R}^{\phi_n}$ can be proved in a similar manner.
Due to \eqref{eq:ut} and \eqref{Stbound}, it holds that for any $t\in[0,T]$,
\begin{align*}
\|\bar{Z}^{\phi}(t)-\bar{Z}^{\phi_n}(t)\|_{\bar{H}_\gamma}&\le \int_0^t \|B(\bar{Z}^{\phi}(s))-B(\bar{Z}^{\phi_n}(s))\|_{\bar{H}_\gamma}\ud s\\
&\quad+\int_0^t \|(G(\bar{Z}^{\phi}(s))-G(\bar{Z}^{\phi_n}(s)))\phi_n (s)^\wedge\|_{\bar{H}_\gamma}\ud s\\
&\quad+\left\|\int_0^t \bar{S}(t-s) G(\bar{Z}^{\phi}(s))(\phi(s)-\phi_n(s))^\wedge\ud s\right\|_{\bar{H}_\gamma}.
\end{align*}
By the Cauchy--Schwarz inequality, $\{\phi_n\}_{n\in\mathbb{N}^+}\subset S_N$, and
the Lipschitz continuity of $B$ and $G$ in \eqref{eq:Glip}, we infer that for any $t\in[0,T]$,
\begin{align*}
\|\bar{Z}^{\phi}(t)-\bar{Z}^{\phi_n}(t)\|^2_{\bar{H}_\gamma}&\le C(1+N)\int_0^t \|\bar{Z}^{\phi}(s)-\bar{Z}^{\phi_n}(s)\|^2_{\bar{H}_\gamma}\ud s\\
&\quad+C\sup_{t\in[0,T]}\left\|\int_0^t \bar{S}(t-s) G(\bar{Z}^{\phi}(s))(\phi(s)-\phi_n(s))^\wedge\ud s\right\|_{\bar{H}_\gamma}^2.
\end{align*}
Subsequently, an application of the Gronwall inequality leads to
\begin{equation}\label{eq:U-U-H}
\|\bar{Z}^{\phi}(t)-\bar{Z}^{\phi_n}(t)\|^2_{\mathcal{C}([0,T];\bar{H}_\gamma)}\le C\sup_{t\in[0,T]}\left\|\int_0^t \bar{S}(t-s) G(\bar{Z}^{\phi}(s))(\phi_n(s)-\phi(s))^\wedge\ud s\right\|_{\bar{H}_{\gamma}},
\end{equation}
where the constant $C>0$ may depend on $N$.
In virtue of $\mathfrak{u}\in H_{\gamma_1}$, \eqref{eq:Zvt}, and \eqref{fvarphi}, one has that for any $y\in L^2(0,T; H_{\gamma_1} )$,
\begin{align*}
	\|G(\bar{Z}^{\phi}(s)^\vee)^*y\|^2_{L^2(0,T;\mathcal{U}_0)}&\le \int_0^T\|G(\bar{Z}^{\phi}(t))^\vee\|_{\mathcal{L}_2(\mathcal{U}_0, H_{\gamma_1} )}^2\| y(t)\|^2_{ H_{\gamma_1} }\ud t\\
	&\le C(1+\|\bar{Z}^{\phi}\|^2_{\mathcal{C}([0,T];\bar{H}_{\gamma_1})})\|y\|^2_{L^2(0,T;\bar{H}_{\gamma_1})} <\infty.
\end{align*}
It follows from $\phi_n\rightharpoonup\phi$ in $L^2(0,T;\mathcal{U}_0)$ that for any $y\in L^2(0,T; H_{\gamma_1} )$,
\begin{align*}
	\langle G((\bar{Z}^{\phi})^\vee)\phi_n,y\rangle_{L^2(0,T; H_{\gamma_1} )}&=\langle \phi_n,G((\bar{Z}^{\phi})^\vee)^*y\rangle_{L^2(0,T;\mathcal{U}_0)}\\
	&\to \langle \phi,G((\bar{Z}^{\phi})^\vee)^*y\rangle_{L^2(0,T;\mathcal{U}_0)}=\langle G((\bar{Z}^{\phi})^\vee)\phi,y\rangle_{L^2(0,T; H_{\gamma_1} )},
\end{align*}
which means that 
$G((\bar{Z}^{\phi})^\vee)\phi_n\rightharpoonup G((\bar{Z}^{\phi})^\vee)\phi$ in $L^2(0,T; H_{\gamma_1} )$ as $n\to\infty$. Furthermore, observe that for any $\varphi\in L^2(0,T;\mathcal{U}_0)$,
\begin{equation*}
	\int_0^t \bar{S}(t-s) G(\bar{Z}^{\phi}(s))\varphi(s)^\wedge\ud s=\bar{\Xi}\big((G(\bar{Z}^{\phi})^\vee)\varphi\big)(t),\quad t\in[0,T].
\end{equation*}
Finally, the weak convergence of $G((\bar{Z}^{\phi})^\vee)\phi_n$ combined with
the compactness of $\bar{\Xi}$ from $L^2(0,T; H_{\gamma_1} )$ to $\mathcal C([0,T],\bar{H}_{\gamma})$ (see Lemma \ref{lemma:finite}(2)) and \eqref{eq:U-U-H} finishes the proof. 
\end{proof}

Since Lemma \ref{lem:compact} only involve the a priori $L^2$-estimates, Assumption \ref{asp:gamma} in Lemma \ref{lem:compact} can be replaced by Assumptions \ref{Asp:SG-strong} and \ref{asp:S-S} (see Remark \ref{rem:L2}).

\subsection{Proof of Theorem \ref{thm:LDP-graph}}
In this subsection, we prove Theorem \ref{thm:LDP-graph} which concerns the LDP of $\{(\bar{u}_\epsilon)^\vee\}_{\epsilon\in(0,1]}$ on $\mathcal{C}([0,T];H_\gamma)$ as $\epsilon\to0$.

Let $\mathcal G^\epsilon$ be the measurable map associating $\mathcal{W}$ to $(\bar{u}_\epsilon)^\vee$, where $\bar{u}_\epsilon$ is the solution to \eqref{eq:intro}, i.e., $(\bar{u}_\epsilon)^\vee=\mathcal G^\epsilon(\mathcal{W})$ for $\epsilon>0$.
For any $v\in\mathcal A_N$ and $\epsilon>0$, the Girsanov theorem (see, e.g., \cite[Theorem 10.14]{DZ14}) indicates that $\widetilde{\mathcal{W}}^{\epsilon,v}:=\mathcal{W}+\frac{1}{\sqrt\epsilon}\int_0^\cdot v(s)\ud s$ is a $\mathcal{U}_0$-cylindrical Wiener process under $\widetilde{\mathbb{P}}^{\epsilon,v}$, where
\begin{equation}\label{eq:P} \frac{\ud\widetilde{\mathbb{P}}^{\epsilon,v}}{\ud\mathbb{P}}:=\exp\left(-\frac{1}{\sqrt\epsilon}\int_0^T\langle v(s),\ud \mathcal{W}(s)\rangle_{\mathcal{U}_0}-\frac{1}{2\epsilon}\int_0^T\|v(s)\|_{\mathcal{U}_0}^2\ud s\right),\quad \epsilon>0.
\end{equation}
Hence,
 $\mathcal G^\epsilon(\widetilde{\mathcal{W}}^{\epsilon,v})=(\bar{\mathfrak{U}}_{\epsilon}^v)^\vee$ where $\bar{\mathfrak{U}}_{\epsilon}^v$ is the unique mild solution to \eqref{eq:intro} under $\widetilde{\mathbb{P}}^{\epsilon,v}$ with $(\bar{u}^\epsilon,\bar{\mathcal{W}})$ replaced by $(\bar{\mathfrak{U}}_{\epsilon}^v,(\widetilde{\mathcal{W}}^{\epsilon,v})^\wedge)$. Since $\mathbb{P}$ is equivalent to $\widetilde{\mathbb{P}}^{\epsilon,v}$, $\bar{\mathfrak{U}}_{\epsilon}^v$ satisfies the following stochastic controlled equation associated with \eqref{eq:intro}
\begin{align*} 
	\ud \bar{\mathfrak{U}}_{\epsilon}^v(t)=\bar{L} \bar{\mathfrak{U}}_{\epsilon}^v(t)\ud t
	+B(\bar{\mathfrak{U}}_{\epsilon}^v(t))\ud t+\sqrt{\epsilon}G(\bar{\mathfrak{U}}_{\epsilon}^v(t))\ud\bar{\mathcal{W}}(t)+G(\bar{\mathfrak{U}}_{\epsilon}^v(t))v(t)^\wedge\ud t
\end{align*}
for $t\in[0,T]$
with $\bar{\mathfrak{U}}_{\epsilon}^v(0) = \mathfrak{u}^\wedge$, under $\mathbb{P}$. Since $B:\bar{H}_\gamma\to \bar{H}_\gamma$ and $G:\bar{H}_\gamma\to \mathcal{L}_2(\bar{\mathcal{U}}_0,\bar{H}_\gamma)$ are globally Lipschitz continuous (see \eqref{eq:Glip}), it can be shown by using \eqref{Stbound} that
for any $p\ge1$,
\begin{equation}\label{eq:Svep}
	\E\left[\|\bar{\mathfrak{U}}_{\epsilon}^v(t)\|_{\bar{H}_\gamma }^p\right]\le C(T,p,\| \mathfrak{u} \|_{ H_\gamma },N)\quad\forall~\epsilon\in(0,1],t\in[0,T],v\in \mathcal{A}_N.
\end{equation}
In addition, we denote \begin{equation}\label{eq:G0}
	\mathcal G^0\left(\int_0^\cdot \phi(s)\ud s\right):=(\bar{Z}^\phi(\cdot))^\vee,\quad \phi\in L^2(0,T;\mathcal{U}_0),
\end{equation}
where $\bar{Z}^\phi(\cdot)$ is given by the skeleton equation \eqref{eq.skeleton-eq}.

\textbf{Proof of Theorem \ref{thm:LDP-graph}.} To prove Theorem \ref{thm:LDP-graph}, we apply Proposition \ref{prop:criterionMSZ} with \textsl{Condition A}${^\prime}$ and $\zeta(\epsilon)=\epsilon$. Recalling $\mathcal G^\epsilon(\widetilde{\mathcal{W}}^{\epsilon,v})=(\bar{\mathfrak{U}}_{\epsilon}^v)^\vee$ and $\mathcal{G}^0$ defined in \eqref{eq:G0}, the proof of Theorem \ref{thm:LDP-graph} therefore reduces to verifying the following statements \textbf{(A1)} and \textbf{(A2)}.
\begin{enumerate}
	\item[\textbf{(A1)}] For every $N >0$, if $\{\phi^{\epsilon}\}_{\epsilon>0} \subset S_{N}$ converges weakly to $\phi$, then $\bar{Z}^{\phi^\epsilon}\to \bar{Z}^\phi$ in $\mathcal{C}([0,T];\bar{H}_\gamma)$
	as $\epsilon\to 0$;

	\item[\textbf{(A2)}] For every $N >0$, any family $\{v^\epsilon\}_{\epsilon>0}\subset \mathcal{A}_N$ and any $\delta_0>0$,
	$$\lim_{\epsilon\to 0}\mathbb{P}\left\{\sup_{t\in[0,T]}\|\bar{\mathfrak{U}}_{\epsilon}^{v^\epsilon}(t)-\bar{Z}^{v^\epsilon}(t)\|_{\bar{H}_\gamma}>\delta_0 \right\}=0.$$
\end{enumerate}
The above condition \textbf{(A1)} has been shown in Lemma \ref{lem:compact}, while \textbf{(A2)} comes from Lemma \ref{lem:Y-Z} and the Markov inequality.
\hfill$\square$

\begin{lemma}\label{lem:Y-Z}
Assume that $b$ and $g$ are globally Lipschitz continuous and $\mathfrak{u}\in H_\gamma$.
Then under Assumption \ref{asp:gamma}, for any $\{v^\epsilon\}_{\epsilon\in(0,1]}\subset \mathcal{A}_N$ with some $N >0$,
 \begin{equation*} \E\bigg[\sup_{t\in[0,T]}\|\bar{\mathfrak{U}}_{\epsilon}^{v^\epsilon}(t)-\bar{Z}^{v^\epsilon}(t)\|^2_{ H_\gamma }\bigg]\le C(T,\| \mathfrak{u} \|_{ H_\gamma },N)\epsilon.
\end{equation*}
\end{lemma}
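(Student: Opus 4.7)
The plan is to decompose the difference $e^\epsilon(t) := \bar{\mathfrak{U}}_\epsilon^{v^\epsilon}(t) - \bar{Z}^{v^\epsilon}(t)$ in mild form, estimate each component in $\bar{H}_\gamma$, and close the argument by a Gronwall-type inequality. Subtracting the mild form \eqref{eq:ut} of $\bar{Z}^{v^\epsilon}$ from the controlled mild equation satisfied by $\bar{\mathfrak{U}}_\epsilon^{v^\epsilon}$ produces the splitting $e^\epsilon(t) = I_1(t) + I_2(t) + I_3(t)$, where
\begin{align*}
I_1(t) &= \int_0^t \bar{S}(t-s)\bigl[B(\bar{\mathfrak{U}}_\epsilon^{v^\epsilon}(s)) - B(\bar{Z}^{v^\epsilon}(s))\bigr]\ud s,\\
I_2(t) &= \int_0^t \bar{S}(t-s)\bigl[G(\bar{\mathfrak{U}}_\epsilon^{v^\epsilon}(s)) - G(\bar{Z}^{v^\epsilon}(s))\bigr] v^\epsilon(s)^\wedge\ud s,\\
I_3(t) &= \sqrt{\epsilon}\int_0^t \bar{S}(t-s) G(\bar{\mathfrak{U}}_\epsilon^{v^\epsilon}(s))\ud \bar{\mathcal{W}}(s).
\end{align*}

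For $I_1$, the semigroup bound \eqref{Stbound}, the Lipschitz property \eqref{eq:Glip}, and Cauchy--Schwarz give $\sup_{r\in[0,t]}\|I_1(r)\|_{\bar{H}_\gamma}^2\le CT\int_0^t \|e^\epsilon(s)\|_{\bar{H}_\gamma}^2\ud s$. For $I_2$, using additionally $\|v^\epsilon(s)^\wedge\|_{\bar{\mathcal{U}}_0}=\|v^\epsilon(s)\|_{\mathcal{U}_0}$ and $v^\epsilon\in \mathcal{A}_N$, one obtains
\begin{equation*}
\sup_{r\in[0,t]}\|I_2(r)\|_{\bar{H}_\gamma}^2 \le C\Bigl(\int_0^t\|v^\epsilon(s)\|_{\mathcal{U}_0}^2\ud s\Bigr)\Bigl(\int_0^t\|e^\epsilon(s)\|_{\bar{H}_\gamma}^2\ud s\Bigr) \le CN\int_0^t\|e^\epsilon(s)\|_{\bar{H}_\gamma}^2\ud s.
\end{equation*}
For $I_3$, I invoke the maximal inequality for stochastic convolutions obtained via the Da Prato--Zabczyk factorization argument (cf.\ \cite[section 5.3]{DZ14}), which under \eqref{Stbound} yields
\begin{equation*}
\E\sup_{t\in[0,T]}\|I_3(t)\|_{\bar{H}_\gamma}^2 \le C\epsilon\,\E\int_0^T \|G(\bar{\mathfrak{U}}_\epsilon^{v^\epsilon}(s))\|_{\mathcal{L}_2(\bar{\mathcal{U}}_0,\bar{H}_\gamma)}^2\ud s.
\end{equation*}
Combining with the linear growth \eqref{eq:Glingr} and the a priori estimate \eqref{eq:Svep} bounds the right-hand side by $C(T,\|\mathfrak{u}\|_{H_\gamma},N)\,\epsilon$.

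Putting the three estimates together and taking expectation,
\begin{equation*}
\E\sup_{r\in[0,t]}\|e^\epsilon(r)\|_{\bar{H}_\gamma}^2 \le C(T,\|\mathfrak{u}\|_{H_\gamma},N)\epsilon + C(T,N)\int_0^t \E\sup_{r\in[0,s]}\|e^\epsilon(r)\|_{\bar{H}_\gamma}^2\ud s,
\end{equation*}
and Gronwall's inequality gives the claimed bound, after using the isometry \eqref{fvarphi} to pass from $\bar{H}_\gamma$ to $H_\gamma$. The main obstacle I anticipate is the estimate for $I_3$: since $\bar{S}$ is only known to be bounded on $\bar{H}_\gamma$ (not contractive), the standard Doob--type maximal bound for stochastic convolutions in contractive semigroups does not apply directly, and one must invoke the factorization method with some $\alpha\in(0,1/2)$ to trade the stochastic convolution for a deterministic convolution of an $L^p$-regular process and only then apply \eqref{Stbound}. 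All other pieces are routine once this uniform-in-time bound is secured.
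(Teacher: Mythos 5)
Your proposal follows essentially the same route as the paper: subtract the mild formulations to obtain an integral identity for the error, bound the drift and diffusion differences by Lipschitz continuity and the semigroup bound \eqref{Stbound}, control the $\sqrt{\epsilon}$-weighted stochastic convolution by the factorization argument (which, as you correctly note, requires only boundedness of $\bar{S}$, not contractivity, and which one first runs at some $p>2$ before passing to $p=2$ by H\"older, exactly as in the paper), and then close with Gronwall. The only cosmetic difference is that the paper compresses your three-term splitting into one line by citing the analogous computation leading to \eqref{eq:Zphi}.
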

\begin{proof}
Since $\{v^\epsilon\}_{\epsilon\in(0,1]}\subset \mathcal{A}_N$, analogue to the proof of \eqref{eq:Zphi}, it holds that for any $t_1\in[0,T]$,
\begin{align}\label{eq:Ygronwall}\notag
\E\bigg[\sup_{t\in[0,t_1]}\|\bar{\mathfrak{U}}_{\epsilon}^{v^\epsilon}(t)-\bar{Z}^{v^\epsilon}(t)\|^2_{\bar{H}_\gamma }\bigg]&\le C(N+1)\E\bigg[\int_0^{t_1}\sup_{r\in[0,s]}\|\bar{\mathfrak{U}}_{\epsilon}^{v^\epsilon}(r)-\bar{Z}^{v^\epsilon}(r)\|^2_{ H_\gamma }\ud s\bigg]\\
&\quad+\epsilon \E\left[\sup_{t\in[0,T]}\Big\|\int_0^t \bar{S}(t-s)G(\bar{\mathfrak{U}}_{\epsilon}^{v^\epsilon}(s))\ud\bar{\mathcal{W}}(s)\Big\|_{ H_\gamma }^2\right].
\end{align}
Using the factorization argument \cite[Proposition 7.3]{DZ14}, for any $p>2$,
\begin{align*}
\E\left[\sup_{t\in[0,T]}\Big\|\int_0^t \bar{S}(t-s)G(\bar{\mathfrak{U}}_{\epsilon}^{v^\epsilon}(s))\ud\bar{\mathcal{W}}(s)\Big\|_{ H_\gamma }^p\right]&\le C\int_0^{T}\E\left[\|G(\bar{\mathfrak{U}}_{\epsilon}^{v^\epsilon}(s))\|_{\mathcal{L}_2(\bar{\mathcal{U}}_0, \bar{H}_\gamma )}^p\right]\ud s\\
&\le C(p,T,\| \mathfrak{u} \|_{ H_\gamma },N),
\end{align*}
thanks to \eqref{eq:Svep} and the linear growth of $G$ in \eqref{eq:Glingr}. By the H\"older inequality, the same estimate also holds for $p=2$.
Finally, applying the Gronwall inequality to \eqref{eq:Ygronwall} leads to the desired result.
\end{proof}

\subsection{Proof of Theorem \ref{thm:LDP}}
This subsection is devoted to proving Theorem \ref{thm:LDP} concerning the LDP of $\{u^\epsilon_{\psi(\epsilon)}\}_{\epsilon\in(0,1]}$ on $\mathcal{C}([\tau_0,T];H_\gamma)$ as $\epsilon\to 0$, for any fixed $\tau_0\in(0,T)$. 
By \eqref{eq:intro2}, we have
\begin{equation}\label{eq:model-dia}
\ud u^\epsilon_{\psi(\epsilon)}(t)=L_{\psi(\epsilon)} u^\epsilon_{\psi(\epsilon)}(t)\ud t
 +B(u^\epsilon_{\psi(\epsilon)}(t))\ud t+\sqrt{\epsilon}G(u^\epsilon_{\psi(\epsilon)}(t))\ud\mathcal{W}(t),\quad t\in[0,T]
\end{equation}
with the initial value $u^\epsilon_{\psi(\epsilon)}(0)=\mathfrak{u}$. 
For $\epsilon>0$,
let $\mathcal F^\epsilon:\mathcal{C}([0,T];\mathcal{U}_1)\mapsto\mathcal{C}([0,T];H_\gamma)$ be the measurable map associating $\mathcal{W}$ to the mild solution $u^\epsilon_{\psi(\epsilon)}$ to \eqref{eq:model-dia}, i.e., $u^\epsilon_{\psi(\epsilon)}=\mathcal F^\epsilon(\mathcal{W})$. Referring to \eqref{eq:P} and $\widetilde{\mathcal{W}}^{\epsilon,v}:=\mathcal{W}+\frac{1}{\sqrt\epsilon}\int_0^\cdot v(s)\ud s$, one has that
 $\mathfrak{L}_{\epsilon}^v:=\mathcal F^\epsilon(\widetilde{\mathcal{W}}^{\epsilon,v})$ 
 is the unique mild solution to \eqref{eq:model-dia} under $\widetilde{\mathbb{P}}^{\epsilon,v}$ with $(u^\epsilon_{\psi(\epsilon)},\mathcal{W})$ replaced by $(\mathfrak{L}_{\epsilon}^v,\widetilde{\mathcal{W}}^{\epsilon,v})$. Since $\mathbb{P}$ is equivalent to $\widetilde{\mathbb{P}}^{\epsilon,v}$, $\mathfrak{L}_{\epsilon}^v$ satisfies the following stochastic controlled equation associated with \eqref{eq:model-dia}
\begin{equation} \label{eq.controlled-eq}
\ud \mathfrak{L}_{\epsilon}^v(t)= L_{\psi(\epsilon)} \mathfrak{L}_{\epsilon}^v(t)\ud t
 +B(\mathfrak{L}_{\epsilon}^v(t))\ud t+\sqrt{\epsilon}G(\mathfrak{L}_{\epsilon}^v(t))\ud\mathcal{W}(t)+G(\mathfrak{L}_{\epsilon}^v(t))v(t)\ud t
\end{equation}
for $t\in[0,T]$
with $\mathfrak{L}_{\epsilon}^v(0) = \mathfrak{u} $, under $\mathbb{P}$. Since $B:H_\gamma\to H_\gamma$ and $G:H_\gamma\to \mathcal{L}_2(\mathcal{U}_0,H_\gamma)$ are globally Lipschitz continuous, it can be shown by using Assumption \ref{Asp:SG-strong} that
for any $p\ge1$,
\begin{equation*}
	\E\left[\|\mathfrak{L}_{\epsilon}^v(t)\|_{ H_\gamma }^p\right]\le C(T,p,\| \mathfrak{u} \|_{ H_\gamma },N)\quad\forall~\epsilon\in(0,1],t\in[0,T],v\in \mathcal{A}_N.
\end{equation*}
Set $(\bar{Z}^\phi(\cdot))^\vee:=\mathcal{F}^{0}\left(\int_{0}^\cdot \phi(s) \mathrm{d} s\right)$, where $\bar{Z}^\phi$ is defined by \eqref{eq.skeleton-eq} for $\phi\in L^2(0,T;\mathcal{U}_0)$.

 In the following, let $\{v^{\epsilon}\}_{\epsilon>0} \subset \mathcal{A}_{N}$ converge in distribution (as $S_{N}$-valued random elements) to $v$. 
To measure the error between $\mathfrak{L}_{\epsilon}^{v^\epsilon}(\cdot)$ and $(\bar{Z}^v(\cdot))^\vee$, we introduce an auxiliary equation
\begin{align} \label{eq.wcontrolled-eq}
\ud w_{\epsilon}^{\phi}(t)= L_{\psi(\epsilon)} w_{\epsilon}^{\phi}(t)\ud t
 +B(w_{\epsilon}^{\phi}(t))\ud t+G(w_{\epsilon}^{\phi}(t)){\phi}(t)\ud t, \quad t\in[0,T]
\end{align}
with $w_{\epsilon}^{\phi}(0)= \mathfrak{u} $,
where $\phi\in L^2(0,T;\mathcal{U}_0)$ and $\epsilon>0$. 
 Proceeding as in the proofs of \eqref{eq:ut} and \eqref{eq:Zvt},
by Assumption \ref{Asp:SG-strong}, one also has 
that for each fixed $\epsilon>0$, there exists a unique mild solution $w_\epsilon^{\phi}$ to \eqref{eq.wcontrolled-eq} in $ \mathcal C([0, T] ; H_\gamma )$ satisfying
\begin{equation}\label{eq:wphit}
	\|w^{\phi}_\epsilon\|_{\mathcal{C}([0,T]; H_\gamma )}\le C(\|\phi\|_{L^2(0,T;\mathcal{U}_0)},\| \mathfrak{u} \|_{ H_\gamma },T),\quad \epsilon>0.
\end{equation}
Moreover, the map $\phi\mapsto w^{\phi}_\epsilon$ is continuous from $L^2(0,T;\mathcal{U}_0)$ to $\mathcal{C}([0,T]; H_\gamma )$ (see Remark \ref{rem:strong-continuity} for a similar argument).

\begin{lemma}\label{lem:w-u}
Assume that $b$ and $g$ are globally Lipschitz continuous and $\mathfrak{u}\in H_\gamma$. Then under Assumption \ref{Asp:SG-strong}, for any $\{v^\epsilon\}_{\epsilon>0}\subset \mathcal{A}_N$ with some $N >0$,
\begin{equation*}
\E\bigg[\sup_{t\in[0,T]}\|w_\epsilon^{v^\epsilon}(t)-\mathfrak{L}_{\epsilon}^{v^\epsilon}(t)\|^2_{ H_\gamma }\bigg]\le C(T,\| \mathfrak{u} \|_{ H_\gamma },N)\epsilon.
\end{equation*}
\end{lemma}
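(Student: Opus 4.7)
The plan is to mimic the argument for Lemma \ref{lem:Y-Z}, replacing the graph semigroup $\bar{S}(t)$ by the multiscale semigroup $S_{\psi(\epsilon)}(t)$ and invoking its uniform boundedness from Assumption \ref{Asp:SG-strong}. Writing $e_\epsilon(t) := w_\epsilon^{v^\epsilon}(t) - \mathfrak{L}_{\epsilon}^{v^\epsilon}(t)$ in mild form via \eqref{eq.wcontrolled-eq} and \eqref{eq.controlled-eq}, one has
\begin{align*}
e_\epsilon(t) &= \int_0^t S_{\psi(\epsilon)}(t-s)\bigl[B(w_\epsilon^{v^\epsilon}(s))-B(\mathfrak{L}_{\epsilon}^{v^\epsilon}(s))\bigr]\ud s \\
&\quad + \int_0^t S_{\psi(\epsilon)}(t-s)\bigl[G(w_\epsilon^{v^\epsilon}(s))-G(\mathfrak{L}_{\epsilon}^{v^\epsilon}(s))\bigr]v^\epsilon(s)\ud s \\
&\quad - \sqrt{\epsilon}\int_0^t S_{\psi(\epsilon)}(t-s)G(\mathfrak{L}_{\epsilon}^{v^\epsilon}(s))\ud\mathcal{W}(s).
\end{align*}

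First I would bound the first two (deterministic) terms using $\|S_{\psi(\epsilon)}(t)\|_{\mathcal{L}(H_\gamma)}\le K$, the Lipschitz continuity of $B,G:H_\gamma\to H_\gamma$ and $H_\gamma\to\mathcal{L}_2(\mathcal{U}_0,H_\gamma)$, and the Cauchy--Schwarz inequality together with $\|v^\epsilon\|_{L^2(0,T;\mathcal{U}_0)}^2\le N$ $\mathbb{P}$-a.s. This gives, for every $t_1\in[0,T]$,
\begin{equation*}
\E\Big[\sup_{t\in[0,t_1]}\|e_\epsilon(t)\|_{H_\gamma}^2\Big] \le C(1+N)\int_0^{t_1}\E\Big[\sup_{r\in[0,s]}\|e_\epsilon(r)\|_{H_\gamma}^2\Big]\ud s + \epsilon\,\mathcal{S}_\epsilon,
\end{equation*}
where $\mathcal{S}_\epsilon := \E\big[\sup_{t\in[0,T]}\|\int_0^t S_{\psi(\epsilon)}(t-s)G(\mathfrak{L}_{\epsilon}^{v^\epsilon}(s))\ud\mathcal{W}(s)\|_{H_\gamma}^2\big]$.

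Second, I would control $\mathcal{S}_\epsilon$ uniformly in $\epsilon$ by the factorization method of \cite[Proposition 7.3]{DZ14}: choosing $p>2$ and exploiting Assumption \ref{Asp:SG-strong} (the constant $K$ is $\delta$-independent) together with the linear growth of $G$ in \eqref{eq:Glingr} and the uniform $L^p(\Omega;\mathcal C([0,T];H_\gamma))$-bound on $\mathfrak{L}_{\epsilon}^{v^\epsilon}$ (a standard consequence of Assumption \ref{Asp:SG-strong} and the Lipschitz/linear-growth properties of $B$ and $G$, analogous to \eqref{eq:Svep}), one obtains $\mathcal{S}_\epsilon\le C(p,T,\|\mathfrak{u}\|_{H_\gamma},N)$. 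The H\"older inequality then extends the bound to $p=2$.

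Finally, the Gronwall inequality applied to the resulting integral inequality yields $\E[\sup_{t\in[0,T]}\|e_\epsilon(t)\|_{H_\gamma}^2]\le C(T,\|\mathfrak{u}\|_{H_\gamma},N)\epsilon$, which is the desired estimate. I do not anticipate a genuine obstacle here: the argument is a direct transplant of Lemma \ref{lem:Y-Z} to the domain-level picture, and the only subtlety is to ensure that every constant produced by the semigroup estimate is independent of $\epsilon$, which is precisely what Assumption \ref{Asp:SG-strong} guarantees.
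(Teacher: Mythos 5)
Your proposal is correct and follows essentially the same route the paper intends: the paper explicitly states that the proof is ``analogous to that of Lemma \ref{lem:Y-Z} and is omitted for brevity,'' and you reproduce exactly that argument—mild-form decomposition, Lipschitz and Cauchy--Schwarz bounds on the deterministic drifts, a factorization estimate for the stochastic convolution uniform in $\epsilon$ thanks to the $\delta$-independent constant $K$ in Assumption \ref{Asp:SG-strong}, and a final Gronwall step. Your note that the analogue of \eqref{eq:Svep} for $\mathfrak{L}_{\epsilon}^{v^\epsilon}$ is the needed a priori moment bound is precisely the right observation.
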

The proof is analogous to that of Lemma \ref{lem:Y-Z} and is omitted for brevity.

\begin{lemma}\label{lem:w-Z}
	Assume that $b$ and $g$ are globally Lipschitz continuous and $\mathfrak{u}\in H_\gamma\cap H_{\gamma_1}$. Let Assumptions \ref{Asp:SG-strong}, \ref{asp:S-S}, and \ref{asp:compact} hold.
 For every $N >0$, if $\{v^{\epsilon}\}_{\epsilon>0} \subset \mathcal{A}_{N}$ converges a.s. (as $S_{N}$-valued random elements) to $v$, then for any $0<\tau_0<T$,
\begin{equation*}
\lim_{\epsilon\to 0}\E\bigg[\sup_{t\in[\tau_0,T]}\|w_\epsilon^{v^\epsilon}(t)-\bar{Z}^{v}(t)^\vee\|_{ H_\gamma }^2\bigg]=0.
\end{equation*}
\end{lemma}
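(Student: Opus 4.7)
The plan is to compare the mild formulations of $w_\epsilon^{v^\epsilon}$ and $\bar{Z}^v(\cdot)^\vee$ on the domain $D$. First I would rewrite $\bar{Z}^v(\cdot)^\vee$ in a form parallel to $w_\epsilon^{v^\epsilon}$: using the identity $(\bar{S}(t)f)^\vee = \bar{S}(t)^\vee f^\vee$ for $f$ on $\Gamma$, together with the observation that $G(\bar{Z}^v(s))v(s)^\wedge$ on $\Gamma$ is the projection of $G(\bar{Z}^v(s)^\vee)v(s)$ (since $g(\bar{Z}^v(s)^\vee)$ is constant on each level set), one obtains
\begin{equation*}
\bar{Z}^v(t)^\vee = \bar{S}(t)^\vee \mathfrak{u} + \int_0^t \bar{S}(t-s)^\vee \left[B(\bar{Z}^v(s)^\vee) + G(\bar{Z}^v(s)^\vee)v(s)\right] \ud s.
\end{equation*}
Subtracting from the mild form of $w_\epsilon^{v^\epsilon}$, the difference $\xi_\epsilon(t):=w_\epsilon^{v^\epsilon}(t)-\bar{Z}^v(t)^\vee$ splits into six pieces: an initial-data difference $[S_{\psi(\epsilon)}(t)-\bar{S}(t)^\vee]\mathfrak{u}$; two Lipschitz contributions carrying $\xi_\epsilon$ inside $B$ and $G$; two semigroup-difference integrals $\int_0^t[S_{\psi(\epsilon)}(t-s)-\bar{S}(t-s)^\vee]F(s)\ud s$ with $F(\cdot)=B(\bar{Z}^v(\cdot)^\vee)$ and $F(\cdot)=G(\bar{Z}^v(\cdot)^\vee)v(\cdot)$; and a control-difference term $\int_0^t S_{\psi(\epsilon)}(t-s)G(\bar{Z}^v(s)^\vee)[v^\epsilon(s)-v(s)]\ud s$.

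Next, I would handle the three semigroup-difference terms by a time-splitting argument. For $\sigma\in(0,\tau_0)$, split $\int_0^t=\int_0^{t-\sigma}+\int_{t-\sigma}^t$. On $[t-\sigma,t]$, Assumption \ref{Asp:SG-strong}, the uniform bound \eqref{eq:Zvt} on $\bar{Z}^v$, and $\|v\|_{L^2(0,T;\mathcal{U}_0)}^2\le N$ reduce the contribution to one of order $\sqrt{\sigma}$. On $[0,t-\sigma]$, Assumption \ref{asp:S-S} gives pointwise convergence of $[S_{\psi(\epsilon)}(t-s)-\bar{S}(t-s)^\vee]\varphi$ uniformly in $t-s\in[\sigma,T]$ for each fixed $\varphi\in H_\gamma$; combined with the $s$-uniform bound on $\|F(s)\|_{H_\gamma}$ and the dominated convergence theorem, these integrals vanish as $\epsilon\to 0$ for each fixed $\sigma$. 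Letting $\epsilon\to 0$ first, then $\sigma\to 0$, controls these contributions. The same splitting, applied with $\bar{S}^\vee$ in place of $S_{\psi(\epsilon)}$, also reduces the control-difference term to $\int_0^t\bar{S}(t-s)^\vee G(\bar{Z}^v(s)^\vee)[v^\epsilon(s)-v(s)]\ud s$ modulo a vanishing remainder.

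This remaining integral is handled via compactness. I would write it as the pullback $(\bar{\Xi}[G(\bar{Z}^v)(v^\epsilon-v)])^\vee$ of the convolution operator of Lemma \ref{lemma:finite}. Since $v^\epsilon\rightharpoonup v$ weakly in $L^2(0,T;\mathcal{U}_0)$ almost surely, and the multiplication operator $G(\bar{Z}^v(\cdot)^\vee)$ maps $L^2(0,T;\mathcal{U}_0)$ boundedly into $L^2(0,T;H_{\gamma_1})$ (the required $\mathcal{C}([0,T];\bar{H}_{\gamma_1})$-bound on $\bar{Z}^v$ follows from $\mathfrak{u}\in H_{\gamma_1}$ and Assumption \ref{asp:compact}(i)), the same weak-convergence argument used in the proof of Lemma \ref{lem:compact} yields $G(\bar{Z}^v)(v^\epsilon-v)\rightharpoonup 0$ weakly in $L^2(0,T;H_{\gamma_1})$. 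Lemma \ref{lemma:finite}(2) then gives strong convergence of $\bar{\Xi}[G(\bar{Z}^v)(v^\epsilon-v)]$ to $0$ in $\mathcal{C}([0,T];\bar{H}_\gamma)$, and the contractivity $\|f^\vee\|_{H_\gamma}=\|f\|_{\bar{H}_\gamma}$ from \eqref{fvarphi} transports this to $\mathcal{C}([0,T];H_\gamma)$.

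Finally, the two Lipschitz contributions are absorbed into $\int_0^t\sup_{r\le s}\|\xi_\epsilon(r)\|_{H_\gamma}^2\ud s$ (the $G$-term bounded via Cauchy--Schwarz using $\|v^\epsilon\|_{L^2(0,T;\mathcal{U}_0)}^2\le N$), and Gronwall's inequality yields $\sup_{t\in[\tau_0,T]}\|\xi_\epsilon(t)\|_{H_\gamma}\to 0$ almost surely. The upgrade to $L^2$-convergence is routine: by \eqref{eq:wphit}, \eqref{eq:Zvt}, and \eqref{fvarphi}, $\sup_{t\in[0,T]}\|\xi_\epsilon(t)\|_{H_\gamma}^2$ is almost surely bounded by a constant depending only on $T$, $\|\mathfrak{u}\|_{H_\gamma}$, and $N$, so the dominated convergence theorem delivers the desired conclusion. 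I expect the main obstacle to be the simultaneous handling of the singularity of the semigroup difference at $s=t$ and the merely weak convergence of the controls; disentangling these requires both the order $\epsilon\to 0$ then $\sigma\to 0$ in the time-splitting and the compactness of $\bar{\Xi}$ to convert weak convergence of $v^\epsilon-v$ into strong convergence of the integrated trajectories.
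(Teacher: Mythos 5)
Your decomposition, the time-splitting, the use of Assumption \ref{asp:S-S}, and the compactness of $\bar{\Xi}$ via Lemma \ref{lemma:finite}(2) all line up with the paper's proof. The difference is cosmetic: you group the $v$-term with the semigroup difference (so $F(s)=G(\bar{Z}^v(s)^\vee)v(s)$ is a fixed $H_\gamma$-valued function and Assumption \ref{asp:S-S} applies directly, without the Hilbert--Schmidt analysis), whereas the paper's $I_{\epsilon,3,2}$ carries $v^\epsilon$. However, your choice only relocates that difficulty; it does not eliminate it, and there are two genuine gaps.

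First, the ``vanishing remainder'' in your control-difference swap is
\[
\int_0^t\bigl[S_{\psi(\epsilon)}(t-s)-\bar{S}(t-s)^\vee\bigr]G(\bar{Z}^v(s)^\vee)\bigl[v^\epsilon(s)-v(s)\bigr]\ud s,
\]
and here the integrand depends on $\epsilon$ through $v^\epsilon$ as well as through $\psi(\epsilon)$. ``The same splitting'' does not apply: on $[0,t-\sigma]$, Assumption \ref{asp:S-S} gives strong-operator convergence against a \emph{fixed} $\varphi\in H_\gamma$, but your test vector $G(\bar{Z}^v(s)^\vee)[v^\epsilon(s)-v(s)]$ moves with $\epsilon$. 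What is actually needed is convergence of the \emph{operator} $\bigl[S_{\psi(\epsilon)}(r)-\bar{S}(r)^\vee\bigr]G(\bar{Z}^v(s)^\vee)$ in $\mathcal{L}_2(\mathcal{U}_0,H_\gamma)$ (or at least in operator norm) uniformly over $r\in[\sigma,T]$, which is precisely the paper's estimate of $I_{\epsilon,3,2}$: one sums over the basis $\{e_i\}$, handles the first $\eta$ terms by bounded convergence, and controls the tail $\sum_{i>\eta}\|G(\bar{Z}^v(s)^\vee)e_i\|^2$ uniformly via \eqref{eq:GZ}. Equivalently this is the compactness argument that strong operator convergence composed with a Hilbert--Schmidt operator yields norm convergence. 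Without that step your remainder is not shown to vanish.

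Second, your Gronwall step is set up over $[0,T]$ (you absorb the Lipschitz contributions into $\int_0^t\sup_{r\le s}\|\xi_\epsilon(r)\|^2\ud s$), yet at $t=0$ the source term is $\xi_\epsilon(0)=\mathfrak{u}-\mathfrak{u}^{\wedge\vee}$, which is \emph{not} zero unless $\mathfrak{u}$ is constant on the connected components of the level sets of $\mathcal{H}$, and does not vanish as $\epsilon\to 0$. A Gronwall bound of the form $\sup_{t\le T}\|\xi_\epsilon(t)\|^2\le Ce^{CT}\sup_{s\le T}\|\alpha_\epsilon(s)\|^2$ therefore cannot give $\sup_{t\in[\tau_0,T]}\|\xi_\epsilon(t)\|\to 0$. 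The paper circumvents this by running Gronwall on $[\tau_1,T]$ only, bounding the Lipschitz contributions from $[0,\tau_1]$ by $C(N,\|\mathfrak{u}\|_{H_\gamma},T)\,\tau_1$ using the uniform a priori bounds \eqref{eq:Zvt} and \eqref{eq:wphit}, and then sending $\epsilon\to 0$ first and $\tau_1\to 0$ afterwards. You use the same double-limit idea for the semigroup integrals but must apply it to the Gronwall structure as well; as written the Gronwall would not close.

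Both gaps are structural features of the paper's argument, so your overall strategy is salvageable, but these two steps need to be carried out rather than asserted.
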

\begin{proof}
By \eqref{eq.controlled-eq} and \eqref{eq:ut},
\begin{align}\label{eq:w-v}\notag
w_\epsilon^{v^\epsilon}(t)-\bar{Z}^{v}(t)^\vee&=S_{\psi(\epsilon)}(t) \mathfrak{u} -\bar{S}(t)^\vee \mathfrak{u} \\\notag
&+\int_0^t S_{\psi(\epsilon)}(t-s)B(w_\epsilon^{v^\epsilon}(s))\ud s-\int_0^t\bar{S}(t-s)^\vee B(\bar{Z}^{v}(s)^\vee)\ud s\\\notag
&+\int_0^t S_{\psi(\epsilon)}(t-s)G(w_\epsilon^{v^\epsilon}(s))v^\epsilon(s)\ud s-\int_0^t\bar{S}(t-s)^\vee G(\bar{Z}^{v}(s)^\vee)v(s)\ud s\\
&=: I_{\epsilon,1}(t)+I_{\epsilon,2}(t)+I_{\epsilon,3}(t).
\end{align}
We further decompose $I_{\epsilon,2}(t)$ and $I_{\epsilon,3}(t)$ as follows
\begin{align*}
I_{\epsilon,2}(t)
&=\int_0^t S_{\psi(\epsilon)}(t-s)(B(w_\epsilon^{v^\epsilon}(s))-B(\bar{Z}^{v}(s)^\vee))\ud s\\
&\quad+\int_0^t (S_{\psi(\epsilon)}(t-s)-\bar{S}(t-s)^\vee) B(\bar{Z}^{v}(s)^\vee)\ud s=:I_{\epsilon,2,1}(t)+I_{\epsilon,2,2}(t),\\
I_{\epsilon,3}(t)&=\int_0^t S_{\psi(\epsilon)}(t-s) (G(w_\epsilon^{v^\epsilon}(s))-G(\bar{Z}^{v}(s)^\vee))v^\epsilon(s)\ud s\\
&\quad+\int_0^t (S_{\psi(\epsilon)}(t-s)- \bar{S}(t-s)^\vee) G(\bar{Z}^{v}(s)^\vee)v^\epsilon(s)\ud s\\
&\quad+\int_0^t \bar{S}(t-s)^\vee G(\bar{Z}^{v}(s)^\vee)(v^\epsilon(s)-v(s))\ud s=:I_{\epsilon,3,1}(t)+I_{\epsilon,3,2}(t)+I_{\epsilon,3,3}(t).
\end{align*}
Let $\tau_1\in(0,T)$ be arbitrarily fixed.
 Invoking Assumption \ref{Asp:SG-strong}, the Lipschitz continuity of $B$ and $G$, as well as the fact $v^\epsilon\in\mathcal{A}_N$, we obtain that for any $t\in[\tau_1,T]$,
\begin{align*}
	&\|I_{\epsilon,2,1}(t)\|^2_{ H_\gamma }+\|I_{\epsilon,3,1}(t)\|^2_{ H_\gamma }\le C(T,N)\int_{0}^{t}\|w_\epsilon^{v^\epsilon}(s)-\bar{Z}^{v}(s)^\vee\|^2_{ H_\gamma }\ud s \\
	&\le C(T,N)\int_{\tau_1}^{t}\sup_{r\in[\tau_1,s]}\|w_\epsilon^{v^\epsilon}(r)-\bar{Z}^{v}(r)^\vee\|^2_{ H_\gamma }\ud s + C(N,\| \mathfrak{u} \|_{ H_\gamma }, T)\tau_1,\quad \textup{a.s.},
\end{align*}
where \eqref{eq:Zvt} and \eqref{eq:wphit} were used in the last step. Combining the above inequality with \eqref{eq:w-v} results in that for any $t_1\in[0,T]$,
\begin{align*}
	\E\bigg[\sup_{t\in[\tau_1,t_1]}\|w_\epsilon^{v^\epsilon}(t)-\bar{Z}^{v}(t)^\vee\|_{ H_\gamma }^2\bigg]&\le C(N,T)\int_{\tau_1}^{t_1}\E\bigg[\sup_{r\in[\tau_1,s]}\|w_\epsilon^{v^\epsilon}(r)-\bar{Z}^{v}(r)^\vee\|^2_{ H_\gamma }\bigg]\ud s \\ 
	&+C\sup_{t\in[\tau_1,T]}\|I_{\epsilon,1}(t)\|_{ H_\gamma }^2+C(N,\| \mathfrak{u} \|_{ H_\gamma }, T)\tau_1\\
	&+C\E\bigg[\sup_{t\in[\tau_1,T]}\left(\|I_{\epsilon,2,2}(t)+I_{\epsilon,3,2}(t)+I_{\epsilon,3,3}(t)\|^2_{ H_\gamma }\right)\bigg].
\end{align*}
This, together with the Gronwall inequality, gives that for any $\epsilon\in(0,1]$,
\begin{align}\label{eq:Errwz}
	&\E\bigg[\sup_{t\in[\tau_1,T]}\|w_\epsilon^{v^\epsilon}(t)-\bar{Z}^{v}(t)^\vee\|_{ H_\gamma }^2\bigg]\le C\sup_{t\in[\tau_1,T]}\|I_{\epsilon,1}(t)\|_{ H_\gamma }^2+C(N,\| \mathfrak{u} \|_{ H_\gamma }, T)\tau_1\\\notag	&\qquad\qquad\qquad\qquad\qquad+C\E\bigg[\sup_{t\in[\tau_1,T]}\left(\|I_{\epsilon,2,2}(t)+I_{\epsilon,3,2}(t)+I_{\epsilon,3,3}(t)\|^2_{ H_\gamma }\right)\bigg].
\end{align}
It follows from Assumption \ref{asp:S-S}, \eqref{eq:deltaepsilon}, and $ \mathfrak{u} \in H_\gamma $ that
\begin{equation}\label{eq:I1ep}
\lim_{\epsilon\to 0}	\sup_{t\in[\tau_1,T]}\|I_{\epsilon,1}(t)\|_{ H_\gamma }=0.
\end{equation}
It remains to estimate the terms related to $I_{\epsilon,2,2}$, $I_{\epsilon,3,2}$ and $I_{\epsilon,3,3}.$

\textit{\bf Estimate of $I_{\epsilon,2,2}$}.
The linear growth of $B$ and \eqref{eq:Zvt} yield that for $v\in\mathcal{A}_N$,
\begin{equation}\label{eq:BZ}
	\sup_{t\in[0,T]}\|B(\bar{Z}^{v}(s)^\vee)\|_{ H_\gamma }\le C(N,\| \mathfrak{u} \|_{ H_\gamma }, T),\quad \textup{a.s.}
\end{equation}
Hence by Assumption \ref{Asp:SG-strong}, \eqref{Stbound}, and \eqref{eq:Zvt},
for any $\tau_1<t\le T$,
\begin{align}\label{eq:I22}
\|I_{\epsilon,2,2}(t)\|_{ H_\gamma }^2&\le T\int_0^{t-\tau_1} \|(S_{\psi(\epsilon)}(t-s)-\bar{S}(t-s)^\vee) B(\bar{Z}^{v}(s)^\vee)\|_{ H_\gamma }^2\ud s\\\notag
&\quad+T\int_{t-\tau_1}^t \|(S_{\psi(\epsilon)}(t-s)-\bar{S}(t-s)^\vee) B(\bar{Z}^{v}(s)^\vee)\|_{ H_\gamma }^2\ud s\\\notag
&\le T \int_{0}^{T} \sup_{r\in[\tau_1,T]}\|(S_{\psi(\epsilon)}(r)-\bar{S}(r)^\vee) B(\bar{Z}^{v}(s)^\vee)\|_{ H_\gamma }^2\ud s+C\tau_1,~ \textup{a.s.}
\end{align}
Referring to Assumption \ref{asp:S-S} and \eqref{eq:deltaepsilon}, for any fixed $s\in[0,T]$,
$$\lim_{\epsilon\to 0}\sup_{r\in[\tau_1,T]}\|(S_{\psi(\epsilon)}(r)-\bar{S}(r)^\vee) B(\bar{Z}^{v}(s)^\vee)\|_{ H_\gamma } = 0,\quad \textup{a.s.}$$
In view of Assumption \ref{Asp:SG-strong}, \eqref{Stbound}, and \eqref{eq:BZ}, we have that for any $s\in[0,T]$,
$$\sup_{r\in[\tau_1,T]}\|(S_{\psi(\epsilon)}(r)-\bar{S}(r)^\vee) B(\bar{Z}^{v}(s)^\vee)\|_{ H_\gamma } \le C(N,\| \mathfrak{u} \|_{ H_\gamma }, T),\quad \textup{a.s.}$$
This, together with \eqref{eq:I22} and the bounded convergence theorem, yields that
\begin{align}\label{eq:I22ep}
&\limsup_{\epsilon\to 0}\E\bigg[\sup_{t\in[\tau_1,T]}\|I_{\epsilon,2,2}(t)\|_{ H_\gamma }^2\bigg]\\\notag
	&\le T\limsup_{\epsilon\to 0}\int_{0}^{T} \E\bigg[\sup_{r\in[\tau_1,T]}\|(S_{\psi(\epsilon)}(r)-\bar{S}(r)^\vee) B(\bar{Z}^{v}(s)^\vee)\|_{ H_\gamma }^2\bigg]\ud s+C\tau_1\\\notag
	&\le C\tau_1.
\end{align}

\textit{\bf Estimate of $I_{\epsilon,3,2}$}.
By the Cauchy--Schwarz inequality, using the fact that \newline $\|\cdot\|_{\mathcal{L}(\mathcal{U}_0,H_\gamma)}\le \|\cdot\|_{\mathcal{L}_2(\mathcal{U}_0,H_\gamma)}$, and $v^\epsilon\in \mathcal{A}_N$, we have that for any $t\in[0,T]$,
\begin{align*}
\|I_{\epsilon,3,2}(t)\|^2_{ H_\gamma}
&\le N\int_0^t \|(S_{\psi(\epsilon)}(t-s)- \bar{S}(t-s)^\vee)G(\bar{Z}^{v}(s)^\vee)\|^2_{\mathcal{L}_2(\mathcal{U}_0, H_\gamma )}\ud s
\end{align*}

Similar to the proof of \eqref{eq:I22}, we also have that for any $t\in[\tau_1,T]$,
\begin{align*}
\|I_{\epsilon,3,2}(t)\|^2_{ H_\gamma }
&\le C\int_{0}^{T} \sup_{r\in[\tau_1,T]}\|(S_{\psi(\epsilon)}(r)- \bar{S}(r)^\vee)G(\bar{Z}^{v}(s)^\vee)\|^2_{\mathcal{L}_2(\mathcal{U}_0, H_\gamma )}\ud s+C\tau_1,\quad \textup{a.s.}
\end{align*}
Recalling that $\{e_i\}_{i=1}^\infty$ is an orthonormal basis of $\mathcal{U}_0$, one has that for any $\eta\in \mathbb{N}^+$,
\begin{align}\label{eq:I32}\notag
\sup_{t\in[\tau_1,T]}\|I_{\epsilon,3,2}(t)\|^2_{ H_\gamma }
&\le C\sum_{i=1}^\eta\int_0^{T} \sup_{r\in[\tau_1,T]}\|(S_{\psi(\epsilon)}(r)- \bar{S}(r)^\vee)G(\bar{Z}^{v}(s)^\vee)e_i\|^2_{ H_\gamma }\ud s\\\notag
&\!\!\!\!\!\!\!\!\!+C\sum_{i=\eta+1}^{\infty}\int_0^{T} \sup_{r\in[\tau_1,T]}\|(S_{\psi(\epsilon)}(r)- \bar{S}(r)^\vee)G(\bar{Z}^{v}(s)^\vee)e_i\|^2_{ H_\gamma }\ud s+C\tau_1\\\notag
&\le C\sum_{i=1}^\eta\int_0^{T} \sup_{r\in[\tau_1,T]}\|(S_{\psi(\epsilon)}(r)- \bar{S}(r)^\vee)G(\bar{Z}^{v}(s)^\vee)e_i\|^2_{ H_\gamma }\ud s\\
&\quad+C\sum_{i=\eta+1}^{\infty}\int_0^{T}\|G(\bar{Z}^{v}(s)^\vee)e_i\|^2_{ H_\gamma }\ud s+C\tau_1,\quad \textup{a.s.}
\end{align}
It follows from the linear growth of $G$ (see \eqref{eq:Glingr}) and \eqref{eq:Zvt} that for $v\in\mathcal{A}_N$,
\begin{equation}\label{eq:GZ}
	\sup_{t\in[0,T]}\|G(\bar{Z}^{v}(s)^\vee)\|_{\mathcal{L}_2(\mathcal{U}_0, H_\gamma )}\le C(N,\| \mathfrak{u} \|_{ H_\gamma }, T),\quad \textup{a.s.}
\end{equation}
According to Assumption \ref{asp:S-S} and \eqref{eq:deltaepsilon}, for each fixed $i\in\mathbb{N}^+$ and $s\in[0,T]$,
 $$ \lim_{\epsilon\to 0}\sup_{r\in[\tau_1,T]}\|(S_{\psi(\epsilon)}(r)- \bar{S}(r)^\vee)G(\bar{Z}^{v}(s)^\vee)e_i\|^2_{ H_\gamma }= 0,\quad \textup{a.s.}.$$
Moreover, by Assumption \ref{Asp:SG-strong}, \eqref{Stbound}, and \eqref{eq:Zvt}, it holds that for any $s\in[0,T]$ and $i\in\mathbb{N}^+$,
\begin{align*} \sup_{r\in[\tau_1,T]}\|(S_{\psi(\epsilon)}(r)- \bar{S}(r)^\vee)G(\bar{Z}^{v}(s)^\vee)e_i\|^2_{ H_\gamma }&\le C\sup_{t\in[0,T]}\|G(\bar{Z}^{v}(s)^\vee)\|_{\mathcal{L}_2(\mathcal{U}_0, H_\gamma )}\\
&\le C(N,\| \mathfrak{u} \|_{ H_\gamma }, T),\quad \textup{a.s.}\end{align*}
Applying the bounded convergence theorem, we conclude that for any fixed $\eta\in \mathbb{N}^+$,
\begin{equation}\label{eq:ESS}
\lim_{\epsilon\to 0}\sum_{i=1}^\eta\E\int_0^T \sup_{r\in[\tau_1,T]}\|(S_{\psi(\epsilon)}(r)- \bar{S}(r)^\vee)G(\bar{Z}^{v}(s)^\vee)e_i\|^2_{ H_\gamma }\ud s=0.
\end{equation}
Taking expectations on both sides of \eqref{eq:I32} and subsequently passing to the limit superior as $\epsilon\to 0$, we infer from \eqref{eq:ESS} that for any $\eta\in\mathbb{N}^+$,
\begin{equation}\label{eq:I32ep-new}
\limsup_{\epsilon\to 0}\E\bigg[\sup_{t\in[\tau_1,T]}\|I_{\epsilon,3,2}(t)\|^2_{ H_\gamma }\bigg]\le C\tau_1+C\sum_{i=\eta+1}^{\infty}\E\int_0^{T}\|G(\bar{Z}^{v}(s)^\vee)e_i\|^2_{ H_\gamma }\ud s.
\end{equation}
Thanks to \eqref{eq:GZ}, the series
 $\sum_{i=1}^{\infty}\E\int_0^{T}\|G(\bar{Z}^{v}(s)^\vee)e_i\|^2_{ H_\gamma }\ud s <\infty$. Hence, taking the limit as $\eta\to \infty$ on both sides of \eqref{eq:I32ep-new} yields
 \begin{equation}\label{eq:I32ep}
\limsup_{\epsilon\to 0}\E\bigg[\sup_{t\in[\tau_1,T]}\|I_{\epsilon,3,2}(t)\|^2_{ H_\gamma }\bigg]\le C\tau_1.
\end{equation}

\textit{\bf Estimate of $I_{\epsilon,3,3}$}.
Taking Lemma \ref{lemma:finite}(2) and \eqref{fvarphi} into account, since $v^\epsilon$ converges weakly to $v$ a.s.,
$I_{\epsilon,3,3}$ converges to $0$ in $\mathcal{C}([0,T]; H_\gamma )$ as $\epsilon\to 0$, a.s. Besides, from \eqref{Stbound}, \eqref{eq:GZ} and $v^\epsilon,v\in\mathcal{A}_N$, we have that for any $t\in[0,T]$,
\begin{align*}
	\|I_{\epsilon,3,3}(t)\|_{ H_\gamma }^2&\le \int_0^t\| \bar{S}(t-s)^\vee G(\bar{Z}^{v}(s)^\vee)\|^2_{\mathcal{L}_2(\mathcal{U}_0, H_\gamma )}\ud s\int_0^t(\|v^\epsilon(s)\|_{\mathcal{U}_0}+\|v(s)\|_{\mathcal{U}_0})^2\ud s\\
	&\le C(N,\| \mathfrak{u} \|_{ H_\gamma }, T),\quad \textup{a.s.}
\end{align*}
By the bounded convergence theorem,
\begin{equation}\label{eq:I33ep}
\limsup_{\epsilon\to 0}\E\bigg[\sup_{t\in[\tau_1,T]}\|I_{\epsilon,3,3}(t)\|^2_{ H_\gamma }\bigg]\le \E\left[\limsup_{\epsilon\to 0}\|I_{\epsilon,3,3}\|_{\mathcal{C}([0,T]; H_\gamma )}^2\right]= 0.
\end{equation}

Inserting \eqref{eq:I1ep}, \eqref{eq:I22ep}, \eqref{eq:I32ep}, and \eqref{eq:I33ep} into \eqref{eq:Errwz} yields
 that for any $\tau_1\in(0,T)$,
\begin{align*}
\limsup_{\epsilon\to 0}\E\bigg[\sup_{t\in[\tau_1,T]}\|w_\epsilon^{v^\epsilon}(t)-\bar{Z}^{v}(t)^\vee\|_{ H_\gamma }^2\bigg]\le C(N,\| \mathfrak{u} \|_{ H_\gamma }, T)\tau_1.
\end{align*}
Furthermore, for any fixed $\tau_0\in(0,T]$, it holds that for any $\tau_1\in(0,\tau_0)$,
\begin{align*}
&\limsup_{\epsilon\to 0}\E\bigg[\sup_{t\in[\tau_0,T]}\|w_\epsilon^{v^\epsilon}(t)-\bar{Z}^{v}(t)^\vee\|_{ H_\gamma }^2\bigg]\\
&\le \limsup_{\epsilon\to 0}\E\bigg[\sup_{t\in[\tau_1,T]}\|w_\epsilon^{v^\epsilon}(t)-\bar{Z}^{v}(t)^\vee\|_{ H_\gamma }^2\bigg]\le C(N)\tau_1.
\end{align*}
Finally, letting $\tau_1\to 0^+$ completes the proof.
\end{proof}

\textbf{Proof of Theorem \ref{thm:LDP}.}
To prove Theorem \ref{thm:LDP}, let $\tau_0\in(0,T)$ be arbitrarily fixed and we define $\mathcal{G}^\epsilon=\textup{restr}_{[\tau_0,T]}\circ\mathcal{F}^\epsilon$ for any $\epsilon\ge0$, where 
the restriction map $\textup{restr}_{[\tau_0,T]}$ is given by $$\textup{restr}_{[\tau_0,T]}:\mathcal{C}([0,T];H_\gamma)\to \mathcal{C}([\tau_0,T];H_\gamma),\quad \Phi\mapsto \Phi\mid_{[\tau_0,T]}.$$ 
In light of Proposition \ref{prop:criterionMSZ} with \textsl{Condition A} and $\zeta(\epsilon)=\epsilon$, the proof of Theorem \ref{thm:LDP} boils down to showing that the following conditions \textbf{(A3)} and \textbf{(A4)} hold. 
\begin{enumerate}
\item[\textbf{(A3)}] For every $N >0$, the set $\left\{\mathcal G^0(\int_0^\cdot\phi(s)\ud s): \phi\in S_{N}\right\}$ is a compact subset of $\mathcal{C}([\tau_0,T];H_\gamma)$;

\item[\textbf{(A4)}] For every $N >0$, if $\{v^{\epsilon}\}_{\epsilon>0} \subset \mathcal{A}_{N}$ converges in distribution (as $S_{N}$-valued random elements) to $v$, then, as $\mathcal{C}([\tau_0,T];H_\gamma)$-valued random elements, $$\mathfrak{L}_{\epsilon}^{v^\epsilon}\rightarrow(\bar{Z}^v(\cdot))^\vee\quad \mbox{in distribution as } \epsilon\to 0.$$

\end{enumerate}

Recall that the map $S_N\ni \phi\mapsto (\bar{Z}^\phi)^\vee\in \mathcal{C}([0,T]; H_\gamma )$ is continuous (see Lemma \ref{lem:compact}). Given that $S_N$ endowed with the weak topology is compact,
the set $\{(\bar{Z}^\phi(\cdot))^\vee=\mathcal{F}^{0}\left(\int_{0}^\cdot \phi(s) \mathrm{d} s\right): \phi\in S_{N}\}$ is a compact subset of $\mathcal{C}([0,T]; H_\gamma )$ for any $N >0$. This verifies \textbf{(A3)}, as $\mathcal{G}^{0}\left(\int_{0}^\cdot \phi(s) \mathrm{d} s\right)$ is the restriction of $\mathcal{F}^{0}\left(\int_{0}^\cdot \phi(s) \mathrm{d} s\right)$ to the interval $[\tau_0,T]$. 
Let $\{v^\epsilon\}_{\epsilon>0}\subset\mathcal{A}_N$ converge to $v$ in distribution (as $S_N$-valued random elements) as $\epsilon\to 0$. Then by the Skorohod representation theorem, there exists a probability space $(\bar{\Omega},\bar{\mathcal{F}},\bar{\mathbb{P}})$ and $(\bar{v}^\epsilon, \bar{v})$ defined on $(\bar{\Omega},\bar{\mathcal{F}},\bar{\mathbb{P}})$ such that $\bar{v}^\epsilon=v^\epsilon $, $\bar{v}=v$ in distribution and $\bar{v}^\epsilon$ converges to $\bar{v}$ in $S_N$, $\bar{\mathbb{P}}$-a.s.
Recall that the maps $\phi\mapsto w_{\epsilon}^{\phi}$ and $\phi\mapsto (\bar{Z}^{\phi})^\vee$ are continuous from $L^2(0,T;\mathcal{U}_0)$ to $\mathcal{C}([\tau_0,T]; H_\gamma )$, and thus they are also measurable. This implies that $w_{\epsilon}^{v^\epsilon}=w_{\epsilon}^{\bar{v}^\epsilon}$ and $(\bar{Z}^{v})^\vee=(\bar{Z}^{\bar{v}})^\vee$ in distribution.
According to Lemma \ref{lem:w-Z}, for any fixed $\tau_0\in(0,T)$,
$w_{\epsilon}^{\bar{v}^\epsilon}$ converges to $(\bar{Z}^{\bar{v}})^\vee$ in $L^2(\bar{\Omega};\mathcal{C}([\tau_0,T]; H_\gamma ))$, and thus $w_{\epsilon}^{v^\epsilon}$ converges to $(\bar{Z}^{v})^\vee$ in distribution as $\mathcal{C}([\tau_0,T]; H_\gamma )$-valued random variables.
It then follows from Lemma \ref{lem:w-u} and Slutsky's theorem that $\mathfrak{L}_{\epsilon}^{v^\epsilon}$ converges to $(\bar{Z}^{v})^\vee$ in distribution as $\epsilon\to 0$, and hence proves \textbf{(A4)}. 
\hfill$\square$

\subsection{Proof of Theorem \ref{thm:MDP-graph}}
In this subsection, we present the proof of Theorem \ref{thm:MDP-graph} which establishes the LDP of $\{(\bar{X}^\epsilon)^\vee\}_{\epsilon\in(0,1]}$ on $\mathcal{C}([0,T];H_\gamma)$ as $\epsilon\to 0$.
By the definition in \eqref{eq:X}, for each $\epsilon>0$,
\begin{align}\label{eq:barXepsilon}
	\ud \bar{X}^\epsilon(t)&=\bar{L}\bar{X}^\epsilon(t)\ud t+\frac{B(\bar{u}^0(t)+\sqrt{\epsilon}\lambda(\epsilon)\bar{X}^\epsilon(t))-B(\bar{u}^0(t))}{\sqrt{\epsilon}\lambda(\epsilon)}\ud t
	\\\notag
	&\quad+\frac{G(\bar{u}^0(t)+\sqrt{\epsilon}\lambda(\epsilon)\bar{X}^\epsilon(t))}{\lambda(\epsilon)}\ud \bar{\mathcal{W}}(t)
\end{align}
for $t\in(0,T]$
with the initial value $\bar{X}^\epsilon(0)=0$.
For $\epsilon>0$,
let $\mathcal G^\epsilon$ be the measurable map associating $\mathcal{W}$ to $(\bar{X}^\epsilon)^\vee$, i.e., $(\bar{X}^\epsilon)^\vee=\mathcal G^\epsilon(\mathcal{W})$. 
For any $v\in\mathcal A_N$ and $\epsilon>0$, the Girsanov theorem indicates that $\widetilde{\mathcal{W}}^{\lambda(\epsilon)^{-2},v}:=\mathcal{W}+\lambda(\epsilon)\int_0^\cdot v(s)\ud s$ is a $\mathcal{U}_0$-cylindrical Wiener process under $\widetilde{\mathbb{P}}^{\lambda(\epsilon)^{-2},v}$ (see \eqref{eq:P}).
Then
$\mathcal G^\epsilon(\widetilde{\mathcal{W}}^{\lambda(\epsilon)^{-2},v})=(\bar{\mathfrak{M}}_{\epsilon}^v)^\vee$, where $\bar{\mathfrak{M}}_{\epsilon}^v$ is the unique mild solution to \eqref{eq:barXepsilon} with $(\bar{X}^\epsilon,\mathcal{W})$ replaced by $(\bar{\mathfrak{M}}_{\epsilon}^v,\widetilde{\mathcal{W}}^{\lambda(\epsilon)^{-2},v})$, under $\widetilde{\mathbb{P}}^{\lambda(\epsilon)^{-2},v}$. Since $\mathbb{P}$ is equivalent to $\widetilde{\mathbb{P}}^{\lambda(\epsilon)^{-2},v}$, $\bar{\mathfrak{M}}_{\epsilon}^v$ is also the unique mild solution to the following stochastic controlled equation associated with \eqref{eq:barXepsilon}
\begin{align} \label{eq.controlledX-eq}
	\ud& \bar{\mathfrak{M}}_{\epsilon}^v(t)
	=\bar{L}\bar{\mathfrak{M}}_{\epsilon}^v(t)\ud t+\frac{B(\bar{u}^0(t)+\sqrt{\epsilon}\lambda(\epsilon)\bar{\mathfrak{M}}_{\epsilon}^v(t))-B(\bar{u}^0(t))}{\sqrt{\epsilon}\lambda(\epsilon)}\ud t\\\notag &\quad+\frac{G(\bar{u}^0(t)+\sqrt{\epsilon}\lambda(\epsilon)\bar{\mathfrak{M}}_{\epsilon}^v(t))}{\lambda(\epsilon)}\ud \bar{\mathcal{W}}(t)+G(\bar{u}^0(t)+\sqrt{\epsilon}\lambda(\epsilon)\bar{\mathfrak{M}}_{\epsilon}^v(t))v(t)^\wedge\ud t 
\end{align}
for $t\in[0,T]$
with $\bar{\mathfrak{M}}_{\epsilon}^v(0) = 0$, under $\mathbb{P}$.
Passing to the limit as $\epsilon\to 0$ in \eqref{eq.controlledX-eq} and utilizing \eqref{eq:MDPpara} yield the skeleton equation \eqref{eq:Nskeleton}.
Hence, we denote 
\begin{equation}\label{eq:G0MDP}
	\mathcal G^0\left(\int_0^\cdot \phi(s)\ud s\right):=(\bar{R}^\phi(\cdot))^\vee,\quad \phi\in L^2(0,T;\mathcal{U}_0).
\end{equation}

\textbf{Proof of Theorem \ref{thm:MDP-graph}.}
We prove Theorem \ref{thm:MDP-graph} by applying Proposition \ref{prop:criterionMSZ} with \textsl{Condition A}${^\prime}$ and $\zeta(\epsilon)=\lambda(\epsilon)^{-2}$. Recalling that $\mathcal G^\epsilon(\widetilde{\mathcal{W}}^{\lambda(\epsilon)^{-2},v^\epsilon})=(\bar{\mathfrak{M}}_{\epsilon}^{v^\epsilon})^\vee$ and $\mathcal G^0$ defined in \eqref{eq:G0MDP}, it suffices to verify the following conditions \textbf{(A5)} and \textbf{(A6)}. 
\begin{enumerate}
	\item[\textbf{(A5)}] For every $N >0$, if $\{\phi^{\epsilon}\}_{\epsilon>0} \subset S_{N}$ converges weakly to $\phi$, then
	$\bar{R}^{\phi^\epsilon}\to \bar{R}^\phi$ in $\mathcal{C}([0,T];\bar{H}_\gamma)$ as $\epsilon\to 0$;
	\item[\textbf{(A6)}] For every $N >0$, any family $\{v^\epsilon\}_{\epsilon>0}\subset \mathcal{A}_N$ and any $\delta_0>0$,
	$$\lim_{\epsilon\to 0}\mathbb{P}\left\{\sup_{t\in[0,T]}\|\bar{\mathfrak{M}}_{\epsilon}^{v^\epsilon}(t)-\bar{R}^{v^\epsilon}(t)\|_{\bar{H}_\gamma}>\delta_0 \right\}=0.$$
\end{enumerate}
Condition \textbf{(A5)} has already been established in Lemma \ref{lem:compact}. 
Moreover, condition \textbf{(A6)} is a result of Lemma \ref{lem:M-Z}, \eqref{eq:MDPpara}, and the Markov inequality. 
\hfill$\square$

To prove Lemma \ref{lem:M-Z}, we prepare for regularity estimates of the skeleton equation \eqref{eq:Nskeleton} and the controlled equation \eqref{eq.controlledX-eq} in Lemmas \ref{lem:Rphibound} and \ref{lem:Mv}, respectively.

\begin{lemma}\label{lem:Rphibound}
 Assume that $b$ is continuously differentiable with bounded derivative and $g$ is globally Lipschitz continuous. 
 Let Assumption \ref{asp:gamma} hold and $\mathfrak{u}\in L^{p}(\R^2,\gamma^\vee\ud x)$ for some $p\ge 2$. Then there exists some constant $C$ depending on $N$ such that
\begin{align*}
\sup_{t\in[0,T]}\| \bar{R}^\phi(t)\|_{L^{p}(\Gamma,\ud\nu_\gamma)}\le C\left(1+\|\mathfrak{u}\|_{L^{p}(\R^2,\gamma^\vee\ud x)}\right)\quad \forall~\phi\in S_N.
\end{align*}
\end{lemma}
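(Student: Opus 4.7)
The plan is to apply the $L^p$-semigroup estimate from Lemma \ref{lem:St} to the mild form of \eqref{eq:Nskeleton}, handling the linearized drift and the control term separately, and to close the Gronwall loop using a priori $L^p$-bounds on $\bar{u}^0$.

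First I would write
\begin{equation*}
\bar{R}^\phi(t)=\int_0^t\bar{S}(t-s)\mathcal{D}B(\bar{u}^0(s))\bar{R}^\phi(s)\,\ud s+\int_0^t\bar{S}(t-s)G(\bar{u}^0(s))\phi(s)^\wedge\,\ud s,
\end{equation*}
take the $L^p(\Gamma,\ud\nu_\gamma)$ norm, and apply Minkowski together with Lemma \ref{lem:St} (with $q=p\ge 2$) to pull $\bar{S}(t-s)$ out. Since $b'$ is bounded, the operator $\mathcal{D}B(v)$ is pointwise multiplication by $b'(v)$ (see \eqref{eq:B}), so $\|\mathcal{D}B(\bar{u}^0(s))\bar{R}^\phi(s)\|_{L^p(\Gamma,\ud\nu_\gamma)}\le\|b'\|_\infty\|\bar{R}^\phi(s)\|_{L^p(\Gamma,\ud\nu_\gamma)}$.

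For the control term I would exploit \eqref{eq:eiwedge}: expanding $\phi(s)=\sum_i\phi_i(s)e_i$ in $\mathcal{U}_0$, the Cauchy--Schwarz inequality yields
\begin{equation*}
|\phi(s)^\wedge(z,k)|\le\|\phi(s)\|_{\mathcal{U}_0}\Bigl(\sum_{i=1}^\infty|e_i^\wedge(z,k)|^2\Bigr)^{1/2}\le C\|\phi(s)\|_{\mathcal{U}_0},
\end{equation*}
so $\phi(s)^\wedge\in L^\infty(\Gamma)$. Combining with the linear growth of $g$ and the finiteness of $\nu_\gamma(\Gamma)$ from \eqref{eq:IkTk}, I get
\begin{equation*}
\|G(\bar{u}^0(s))\phi(s)^\wedge\|_{L^p(\Gamma,\ud\nu_\gamma)}\le C\|\phi(s)\|_{\mathcal{U}_0}\bigl(1+\|\bar{u}^0(s)\|_{L^p(\Gamma,\ud\nu_\gamma)}\bigr).
\end{equation*}

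To close the loop I need $L^p$-bounds on $\bar{u}^0$ itself. Applied to the mild form of \eqref{eq:utzk-deter}, Lemma \ref{lem:St} with $q=p$ combined with the linear growth of $B$ in $L^p(\Gamma,\ud\nu_\gamma)$ (which follows from $|b(u)|\le C(1+|u|)$) gives, after Gronwall,
\begin{equation*}
\sup_{t\in[0,T]}\|\bar{u}^0(t)\|_{L^p(\Gamma,\ud\nu_\gamma)}\le C\bigl(1+\|\mathfrak{u}^\wedge\|_{L^p(\Gamma,\ud\nu_\gamma)}\bigr)\le C\bigl(1+\|\mathfrak{u}\|_{L^p(\R^2,\gamma^\vee\ud x)}\bigr),
\end{equation*}
where the second inequality is the $L^p$-version of the contractivity \eqref{fvarphi}, obtained by applying Jensen's inequality to the probability measure $\mu_{z,k}$ in \eqref{eq:wedge} and then using \eqref{eq:nugamma}. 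Plugging this back, bounding $\int_0^t\|\phi(s)\|_{\mathcal{U}_0}\,\ud s\le\sqrt{TN}$ via Cauchy--Schwarz for $\phi\in S_N$, and applying Gronwall once more to the inequality for $\|\bar{R}^\phi(t)\|_{L^p(\Gamma,\ud\nu_\gamma)}$ yields the required bound with a constant depending on $N,T,\|b'\|_\infty$, and the Lipschitz constant of $g$.

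The main point requiring care is the $L^p$-bound on the control term: the Lipschitz/linear-growth estimates \eqref{eq:Glip}--\eqref{eq:Glingr} for $G$ are only stated in the Hilbert--Schmidt ($p=2$) framework, so one cannot invoke them directly when $p>2$. The pointwise bound on $\phi(s)^\wedge$ provided by \eqref{eq:eiwedge} is precisely what bridges this gap and allows the argument to proceed uniformly for all $p\ge 2$.
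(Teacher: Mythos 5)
Your proof is correct and follows essentially the same route as the paper: mild formulation, the $L^p$-semigroup bound from Lemma \ref{lem:St}, the pointwise bound $|\phi(s)^\wedge|\le C\|\phi(s)\|_{\mathcal{U}_0}$ via \eqref{eq:eiwedge}, the $L^p$-contractivity $\|\varphi^\wedge\|_{L^p(\Gamma,\ud\nu_\gamma)}\le\|\varphi\|_{L^p(\R^2,\gamma^\vee\ud x)}$ from Jensen, a Gronwall step for $\bar u^0$ and then another for $\bar R^\phi$. Your closing remark about why the $\mathcal{L}_2$-based estimates \eqref{eq:Glip}--\eqref{eq:Glingr} cannot be invoked directly and how the pointwise bound on $\phi^\wedge$ bridges the gap is exactly the key observation.
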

\begin{proof}
Notice that $\|\varphi^\wedge\|_{L^q(\Gamma,\ud \nu_\gamma)}\le \|\varphi\|_{L^{q}(\R^2,\gamma^\vee\ud x)}$ for any $\varphi\in L^{q}(\R^2,\gamma^\vee\ud x)$ and $q\ge1$.
Indeed, by \eqref{eq:wedge} and the H\"older inequality, 
\begin{align*}
\|\varphi^\wedge\|^q_{L^q(\Gamma,\ud \nu_\gamma)}
&=\sum_{k=1}^m\int_{I_k}\Big|\oint_{C_k(z)}\varphi(x)\mu_{z,k}\Big|^qT_k(z)\gamma(z,k)\ud z\\
&\le\sum_{k=1}^m\int_{I_k}\oint_{C_k(z)}|\varphi(x)|^q\mu_{z,k}T_k(z)\gamma(z,k)\ud z\\
&=\sum_{k=1}^m\int_{I_k}\oint_{C_k(z)}|\varphi(x)|^q\frac{1}{|\nabla \mathcal{H}(x)|}\gamma^\vee(x)\ud l_{z,k}\ud z=\|\varphi\|_{L^{q}(\R^2,\gamma^\vee\ud x)}^q.
\end{align*}

In view of \eqref{eq:utzk-deter}, Lemma \ref{lem:St}, the linear growth of $b$, and \eqref{eq:IkTk},
\begin{align*}
\| \bar{u}^0(t)\|_{L^p(\Gamma,\ud\nu_\gamma)}
&\le C\|\mathfrak{u}^\wedge\|_{L^p(\Gamma,\ud\nu_\gamma)}+C\int_0^t (1+\|\bar{u}^0(s)\|_{L^p(\Gamma,\ud\nu_\gamma)})\ud s.
\end{align*}
This allows us to apply the Gronwall inequality and deduce that
\begin{equation}\label{eq:u0}
\sup_{t\in[0,T]}\| \bar{u}^0(t)\|_{L^p(\Gamma,\ud\nu_\gamma)}\le C(1+\|\mathfrak{u}^\wedge\|_{L^p(\Gamma,\ud\nu_\gamma)})\le C(1+\|\mathfrak{u}\|_{L^{p}(\R^2,\gamma^\vee\ud x)}).
\end{equation}
Similarly, by \eqref{eq:Nskeleton}, the boundedness of $b^\prime$, and the linear growth of $g$, one has
\begin{align*}
		&\|\bar{R}^{\phi}(t)\|_{L^p(\Gamma,\ud\nu_\gamma)}\\
		&\le C\int_0^t \|{\mathcal{D}B}(\bar{u}^0(s))\bar{R}^{\phi}(s) \|_{L^p(\Gamma,\ud\nu_\gamma)}\ud s+C\int_0^t \|G(\bar{u}^{0}(s)) \phi(s)^\wedge\|_{L^p(\Gamma,\ud\nu_\gamma)} \ud s\\
		&\le C\int_0^t \|\bar{R}^{\phi}(s) \|_{L^p(\Gamma,\ud\nu_\gamma)}\ud s+C\int_0^t (1+\|\bar{u}^{0}(s) \|_{L^p(\Gamma,\ud\nu_\gamma)}) \|\phi(s)^\wedge\|_{L^\infty(\Gamma,\ud\nu_\gamma)}\ud s.
	\end{align*}
	In view of $\phi\in S_N\subset L^2(0,T;\mathcal{U}_0)$,
	 the inequality \eqref{eq:eiwedge} implies that for almost everywhere $(s,z,k)\in[0,T]\time\gamma\times\Gamma$,
	\begin{align*}|\phi(s)^\wedge(z,k)|&=\left|\sum_{i=1}^\infty\langle \phi(s),e_i\rangle_{\mathcal{U}_0}e_i^\wedge(z,k)\right|\\
	&\le \left(\sum_{i=1}^\infty\langle \phi(s),e_i\rangle_{\mathcal{U}_0}^2\right)^{\frac12} \left(\sum_{i=1}^\infty|e_i^\wedge(z,k)|^2\right)^{\frac12}\le C\|\phi(s)\|_{\mathcal{U}_0}.
	\end{align*}
Furthermore, for $\phi\in S_N$, using the Cauchy--Schwarz inequality, we have
\begin{align*}
		\|\bar{R}^{\phi}(t)\|_{L^p(\Gamma,\ud\nu_\gamma)}^2
		&\le C\int_0^t \|\bar{R}^{\phi}(s) \|_{L^p(\Gamma,\ud\nu_\gamma)}^2\ud s+C(N)\int_0^t (1+\|\bar{u}^{0}(s) \|_{L^p(\Gamma,\ud\nu_\gamma)}^2) \ud s.
	\end{align*}
		Finally, using \eqref{eq:u0} and applying the Gronwall inequality, we finish the proof.
	\end{proof}
	
	\begin{lemma}\label{lem:Mv}
 Assume that $b$ is continuously differentiable with bounded derivative and $g$ is globally Lipschitz continuous. 
 Let Assumption \ref{asp:gamma} hold and $\mathfrak{u}\in H_\gamma$.
Then for any $p\ge1$,
\begin{equation*}
	\E\left[\|\bar{\mathfrak{M}}_{\epsilon}^v(t)\|_{ H_\gamma }^p\right]\le C(T,p,\| \mathfrak{u} \|_{ H_\gamma },N)\quad\forall~\epsilon\in(0,1],t\in[0,T],v\in \mathcal{A}_N.
\end{equation*}
\end{lemma}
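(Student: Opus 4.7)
The plan is to derive the uniform moment bound from the mild formulation of \eqref{eq.controlledX-eq} by a Gronwall-style argument, following the same template as the a priori estimate in Lemma \ref{lem:Y-Z} and the well-posedness of \eqref{eq:intro}, while carefully tracking how the MDP scalings $\sqrt{\epsilon}\lambda(\epsilon)$ and $\lambda(\epsilon)^{-1}$ enter the nonlinear coefficients. The starting point is the decomposition
$$\bar{\mathfrak{M}}_{\epsilon}^{v}(t)=I_{B,\epsilon}(t)+I_{W,\epsilon}(t)+I_{v,\epsilon}(t),$$
where $I_{B,\epsilon}$, $I_{W,\epsilon}$, and $I_{v,\epsilon}$ come from applying Duhamel's formula with the semigroup $\bar S(t)$ to the drift difference quotient, the rescaled stochastic integral, and the controlled drift involving $v(\cdot)^\wedge$, respectively.

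The three ingredients that make the eventual Gronwall bound uniform in $\epsilon\in(0,1]$ are the following. (i) \emph{Uniform Lipschitz control of the $B$-difference quotient.} Since $b\in C^1$ with $b'$ bounded, the fundamental theorem of calculus yields pointwise
$$\frac{B(\bar u^{0}+\sqrt{\epsilon}\lambda(\epsilon)\bar{\mathfrak{M}}_{\epsilon}^{v})(z,k)-B(\bar u^{0})(z,k)}{\sqrt{\epsilon}\lambda(\epsilon)}=\Bigl(\int_{0}^{1}b'(\bar u^{0}(z,k)+r\sqrt{\epsilon}\lambda(\epsilon)\bar{\mathfrak{M}}_{\epsilon}^{v}(z,k))\,\ud r\Bigr)\bar{\mathfrak{M}}_{\epsilon}^{v}(z,k),$$
whose $\bar H_{\gamma}$-norm is bounded by $\|b'\|_{\infty}\|\bar{\mathfrak{M}}_{\epsilon}^{v}\|_{\bar H_{\gamma}}$, independently of $\epsilon$. (ii) \emph{Uniform linear growth of $G$ at the shifted argument.} From \eqref{eq:MDPpara} and the continuity of $\lambda$, $\sqrt{\epsilon}\lambda(\epsilon)$ is bounded on $(0,1]$; combined with \eqref{eq:Glingr} this gives
$$\|G(\bar u^{0}+\sqrt{\epsilon}\lambda(\epsilon)\bar{\mathfrak{M}}_{\epsilon}^{v})\|_{\mathcal L_{2}(\bar{\mathcal U}_{0},\bar H_{\gamma})}\le C(1+\|\bar u^{0}\|_{\bar H_{\gamma}}+\|\bar{\mathfrak{M}}_{\epsilon}^{v}\|_{\bar H_{\gamma}}).$$
(iii) \emph{The deterministic bound} $\sup_{t\in[0,T]}\|\bar u^{0}(t)\|_{\bar H_{\gamma}}\le C(T,\|\mathfrak{u}\|_{H_{\gamma}})$, obtained by applying Lemma \ref{lem:St} with $q=2$ and the Lipschitz continuity of $b$ to \eqref{eq:utzk-deter}, exactly as in \eqref{eq:u0}.

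Equipped with (i)--(iii), I would estimate $I_{B,\epsilon}$ by \eqref{Stbound} and (i); estimate $I_{v,\epsilon}$ by Cauchy--Schwarz together with (ii) and the inequality $\|v(s)^{\wedge}\|_{\bar H_{\gamma}}\le C\|v(s)\|_{\mathcal U_{0}}$ (which follows from \eqref{eq:eiwedge} and the finiteness of $\nu_{\gamma}$ via \eqref{eq:IkTk}) and $v\in\mathcal A_{N}$; and estimate $I_{W,\epsilon}$ by the Da Prato--Zabczyk stochastic factorization as in the proof of Lemma \ref{lem:Y-Z}, yielding for $p>2$ a bound proportional to $\lambda(\epsilon)^{-p}\int_{0}^{T}\E\|G(\cdots)(s)\|_{\mathcal L_{2}}^{p}\,\ud s$. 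Taking $p$-th power, expectation, and applying Gronwall's inequality then gives the claim for any $p\ge 2$; the case $p\in[1,2)$ follows by Jensen's inequality. The main point requiring care is not an honest obstacle but a verification: one must confirm that $\sqrt{\epsilon}\lambda(\epsilon)$ and $\lambda(\epsilon)^{-1}$ are both bounded on $(0,1]$, which is exactly what \eqref{eq:MDPpara} plus continuity of $\lambda$ provides, and which is what removes the $\epsilon$-dependence from the final constant.
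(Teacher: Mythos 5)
Your proposal is correct and fills in what the paper calls ``standard arguments'': the mild formulation of \eqref{eq.controlledX-eq}, the uniform semigroup bound \eqref{Stbound}, the deterministic estimate \eqref{eq:u0}, the factorization lemma for the rescaled stochastic integral, and Gronwall, exactly as in the paper's template for Lemma \ref{lem:Y-Z}. One small inaccuracy worth fixing: when estimating $I_{v,\epsilon}$ the relevant inequality is $\|v(s)^{\wedge}\|_{\bar{\mathcal{U}}_0}=\|v(s)\|_{\mathcal{U}_0}$, so that $\|G(\cdot)v(s)^{\wedge}\|_{\bar{H}_\gamma}\le\|G(\cdot)\|_{\mathcal{L}_2(\bar{\mathcal{U}}_0,\bar{H}_\gamma)}\|v(s)\|_{\mathcal{U}_0}$; the bound $\|v(s)^{\wedge}\|_{\bar{H}_\gamma}\le C\|v(s)\|_{\mathcal{U}_0}$ you wrote is true but is not the one that lets you peel off $\|v(s)\|_{\mathcal{U}_0}$ in that product, though this does not affect the final conclusion.
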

\begin{proof}
The proof follows by standard arguments using \eqref{eq.controlledX-eq}, \eqref{Stbound}, and \eqref{eq:u0}, and the details are thus omitted.
\end{proof}

Based on Lemmas \ref{lem:Rphibound} and \ref{lem:Mv}, we are able to measure the mean square error between the skeleton equation \eqref{eq:Nskeleton} and the controlled equation \eqref{eq.controlledX-eq}; see Appendix \ref{App:A2} for its proof.

\begin{lemma}\label{lem:M-Z}
 Assume that $b$ is continuously differentiable with the derivative $b^\prime$ being $\alpha_0$th H\"older continuous for some $\alpha_0\in(0,1]$, and $g$ is globally Lipschitz continuous.
Let Assumption \ref{asp:gamma} hold and $\mathfrak{u}\in L^{2\alpha_0+2}(\R^2,\gamma^\vee\ud x)$.
Then for any $\{v^\epsilon\}_{\epsilon\in(0,1]}\subset \mathcal{A}_N$ with some $N >0$, \begin{equation*}
\E\bigg[\sup_{t\in[0,T]}\|\bar{\mathfrak{M}}_{\epsilon}^{v^\epsilon}(t)-\bar{R}^{v^\epsilon}(t)\|^2_{\bar{H}_\gamma }\bigg]\le C\Big((\sqrt{\epsilon}\lambda(\epsilon))^{2\alpha_0}+\lambda(\epsilon)^{-2}\Big).
\end{equation*}
\end{lemma}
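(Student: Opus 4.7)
The plan is to bound the difference $D_\epsilon(t):=\bar{\mathfrak{M}}_\epsilon^{v^\epsilon}(t)-\bar{R}^{v^\epsilon}(t)$ by Gronwall's inequality after subtracting the mild formulations of \eqref{eq.controlledX-eq} and \eqref{eq:Nskeleton}. Writing $\sigma_\epsilon:=\sqrt{\epsilon}\lambda(\epsilon)$ for brevity, the subtraction produces
\begin{equation*}
D_\epsilon(t)=\int_0^t \bar{S}(t-s)\bigl[A_\epsilon^{(1)}(s)+\mathcal{D}B(\bar{u}^0(s))D_\epsilon(s)+E_\epsilon(s)\bigr]\ud s+\frac{1}{\lambda(\epsilon)}\mathcal{I}_\epsilon(t),
\end{equation*}
where $A_\epsilon^{(1)}(s):=\sigma_\epsilon^{-1}[B(\bar{u}^0(s)+\sigma_\epsilon\bar{\mathfrak{M}}_\epsilon^{v^\epsilon}(s))-B(\bar{u}^0(s))]-\mathcal{D}B(\bar{u}^0(s))\bar{\mathfrak{M}}_\epsilon^{v^\epsilon}(s)$ is the Taylor remainder of the drift, $E_\epsilon(s):=[G(\bar{u}^0(s)+\sigma_\epsilon\bar{\mathfrak{M}}_\epsilon^{v^\epsilon}(s))-G(\bar{u}^0(s))]v^\epsilon(s)^\wedge$ is the diffusion perturbation, and $\mathcal{I}_\epsilon(t):=\int_0^t\bar{S}(t-s)G(\bar{u}^0(s)+\sigma_\epsilon\bar{\mathfrak{M}}_\epsilon^{v^\epsilon}(s))\ud\bar{\mathcal{W}}(s)$.

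Three of the four contributions will be handled by relatively routine estimates. The term $\mathcal{D}B(\bar{u}^0)D_\epsilon$ feeds directly into the Gronwall step, because Lipschitz continuity of $b$ forces $b^\prime$ to be bounded. A factorization argument combined with \eqref{eq:Glingr}, Lemma~\ref{lem:Mv}, and the semigroup bound \eqref{Stbound} yields $\E\sup_{t\in[0,T]}\|\lambda(\epsilon)^{-1}\mathcal{I}_\epsilon(t)\|_{\bar{H}_\gamma}^2\le C\lambda(\epsilon)^{-2}$. The perturbation $E_\epsilon$ is controlled through the pointwise estimate $\|v^\epsilon(s)^\wedge\|_{L^\infty(\Gamma)}\le C\|v^\epsilon(s)\|_{\mathcal{U}_0}$, which follows from \eqref{eq:eiwedge} and Cauchy--Schwarz, together with the Lipschitz property of $g$; these give $\|E_\epsilon(s)\|_{\bar{H}_\gamma}\le C\sigma_\epsilon\|v^\epsilon(s)\|_{\mathcal{U}_0}\|\bar{\mathfrak{M}}_\epsilon^{v^\epsilon}(s)\|_{\bar{H}_\gamma}$. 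Integrating in $s$ via Cauchy--Schwarz, using $v^\epsilon\in\mathcal{A}_N$, and invoking Lemma~\ref{lem:Mv} bounds this contribution by $C\sigma_\epsilon^2$, which is dominated by $\sigma_\epsilon^{2\alpha_0}$ since $\sigma_\epsilon\to 0$ by \eqref{eq:MDPpara} and $\alpha_0\in(0,1]$.

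The principal difficulty is the Taylor remainder $A_\epsilon^{(1)}$. Writing $b(u+h)-b(u)=h\int_0^1 b^\prime(u+\theta h)\ud\theta$ and invoking the $\alpha_0$th H\"older continuity of $b^\prime$ yields the pointwise bound $|A_\epsilon^{(1)}(s,z,k)|\le C\sigma_\epsilon^{\alpha_0}|\bar{\mathfrak{M}}_\epsilon^{v^\epsilon}(s,z,k)|^{\alpha_0+1}$, hence
\begin{equation*}
\|A_\epsilon^{(1)}(s)\|_{\bar{H}_\gamma}^2\le C\sigma_\epsilon^{2\alpha_0}\|\bar{\mathfrak{M}}_\epsilon^{v^\epsilon}(s)\|_{L^{2\alpha_0+2}(\Gamma,\ud\nu_\gamma)}^{2\alpha_0+2}.
\end{equation*}
Closing the estimate therefore requires the $\epsilon$-uniform moment bound $\sup_{\epsilon\in(0,1]}\sup_{s\in[0,T]}\E\|\bar{\mathfrak{M}}_\epsilon^{v^\epsilon}(s)\|_{L^{2\alpha_0+2}(\Gamma,\ud\nu_\gamma)}^{2\alpha_0+2}<\infty$, and this is the main technical step. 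I would establish it by adapting the proof of Lemma~\ref{lem:Rphibound} to the stochastic controlled equation \eqref{eq.controlledX-eq}: Lemma~\ref{lem:St} with $q=2\alpha_0+2$ supplies the $L^q$-boundedness of the non-uniformly elliptic semigroup $\bar{S}$, the linear growth of $B$ and $G$ together with the $L^\infty$-bound on $v^{\epsilon,\wedge}$ handle the drift and control terms, and a factorization/Burkholder--Davis--Gundy argument controls the stochastic convolution in $L^{2\alpha_0+2}(\Gamma,\ud\nu_\gamma)$. The hypothesis $\mathfrak{u}\in L^{2\alpha_0+2}(\R^2,\gamma^\vee\ud x)$ enters via the pointwise inequality $\|\mathfrak{u}^\wedge\|_{L^{2\alpha_0+2}(\Gamma,\ud\nu_\gamma)}\le\|\mathfrak{u}\|_{L^{2\alpha_0+2}(\R^2,\gamma^\vee\ud x)}$ derived at the start of the proof of Lemma~\ref{lem:Rphibound}.

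Assembling the four estimates, taking the supremum of $\|D_\epsilon(r)\|_{\bar{H}_\gamma}^2$ over $r\in[0,t_1]$ and then expectations, and applying Gronwall's inequality delivers the asserted bound $\E\sup_{t\in[0,T]}\|D_\epsilon(t)\|_{\bar{H}_\gamma}^2\le C(\sigma_\epsilon^{2\alpha_0}+\lambda(\epsilon)^{-2})$.
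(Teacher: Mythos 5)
Your decomposition of the mild-solution difference is algebraically valid, and three of the four contributions — the Gronwall term $\mathcal{D}B(\bar{u}^0)D_\epsilon$, the rescaled stochastic convolution $\lambda(\epsilon)^{-1}\mathcal{I}_\epsilon$, and the diffusion perturbation $E_\epsilon$ — are handled essentially as the paper does (the paper bounds them via \eqref{eq:Gbound}, Lemma~\ref{lem:Mv}, and the $L^\infty$-control on $v^{\epsilon}(s)^\wedge$, and likewise absorbs the $\sigma_\epsilon^2$ term using \eqref{eq:MDPpara}). The genuine divergence is in the Taylor remainder. You center it at the \emph{controlled stochastic process} $\bar{\mathfrak{M}}_\epsilon^{v^\epsilon}$, so that $A_\epsilon^{(1)}$ involves $|\bar{\mathfrak{M}}_\epsilon^{v^\epsilon}|^{\alpha_0+1}$; the paper centers it at the \emph{skeleton solution} $\bar{R}^{v^\epsilon}$, writing the drift difference as the Lipschitz increment $\sigma_\epsilon^{-1}\bigl[B(\bar{u}^0+\sigma_\epsilon\bar{\mathfrak{M}}_\epsilon^{v^\epsilon})-B(\bar{u}^0+\sigma_\epsilon\bar{R}^{v^\epsilon})\bigr]$ (feeding into Gronwall via the boundedness of $b'$) plus the remainder $\sigma_\epsilon^{-1}\bigl[B(\bar{u}^0+\sigma_\epsilon\bar{R}^{v^\epsilon})-B(\bar{u}^0)\bigr]-\mathcal{D}B(\bar{u}^0)\bar{R}^{v^\epsilon}$. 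Both splittings are equivalent, but the paper's is the more economical one: its remainder involves only $\bar{R}^{v^\epsilon}$, whose $L^{2\alpha_0+2}(\Gamma,\ud\nu_\gamma)$ norm is controlled $\mathbb{P}$-a.s.\ and uniformly over $v^\epsilon\in S_N$ by the \emph{deterministic} Lemma~\ref{lem:Rphibound}, so that only the $\bar{H}_\gamma=L^2$ moments of $\bar{\mathfrak{M}}_\epsilon^{v^\epsilon}$ from Lemma~\ref{lem:Mv} are ever invoked. Your route instead forces you to prove the $\epsilon$-uniform moment bound $\sup_{\epsilon,s}\E\|\bar{\mathfrak{M}}_\epsilon^{v^\epsilon}(s)\|_{L^{2\alpha_0+2}(\Gamma,\ud\nu_\gamma)}^{2\alpha_0+2}<\infty$, which the paper nowhere establishes. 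You correctly flag this as the principal technical step and sketch a plausible strategy (Lemma~\ref{lem:St} for $L^q$-boundedness of $\bar{S}$ plus a factorization/Burkholder--Davis--Gundy argument for the stochastic convolution), but this is strictly harder than what the paper needs: the factorization argument used elsewhere in the paper lives in the Hilbert space $\bar{H}_\gamma$, and carrying it out in $L^{2\alpha_0+2}(\Gamma,\ud\nu_\gamma)$ requires additional machinery (2-smoothness of $L^q$, or a pointwise Fubini argument). The upshot is that by placing the higher-integrability requirement on the \emph{deterministic} skeleton equation rather than the \emph{stochastic} controlled equation, the paper sidesteps exactly the extra work your decomposition creates.
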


\appendix
\section{Proofs of lemmas}

\subsection{Proof of Lemma \ref{lem:St}}\label{App:A1}
For $f\in L^q(\Gamma,\ud \nu_\gamma)$,
we denote $\Upsilon_f(t,z,k):=(\bar{S}(t) f)(z,k)$ for $(t,z,k)\in[0,T]\times \Gamma$. 
{It} follows from \eqref{eq:barL}, the integration by parts formula, and the gluing condition \eqref{eq:glue} that for any $p\in \mathbb{N}^+$, 
\begin{align*}
&\frac{\ud}{\ud t}\sum_{k=1}^m\int_{I_k}|\Upsilon_f(t,z,k)|^{2p}T_k(z)\gamma(z,k)\ud z\\
&=p\sum_{k=1}^m\int_{I_k}\Upsilon_f(t,z,k)^{2p-1}\frac{\partial}{\partial z}\left(\alpha_k(z) \frac{\partial \Upsilon_f(t,z,k)}{\partial z}\right) \gamma(z,k)\ud z\\
&=-p(2p-1)\sum_{k=1}^m\int_{I_k}\Upsilon_f(t,z,k)^{2p-2}\Big|\frac{\partial}{\partial z}\Upsilon_f(t,z,k)\Big|^2\gamma(z,k)\alpha_k(z) \ud z\\
&\quad-p\sum_{k=1}^m\int_{I_k}\Upsilon_f(t,z,k)^{2p-1}\alpha_k (z)\frac{\partial \Upsilon_f(t,z,k)}{\partial z}\frac{\ud }{\ud z}\gamma(z,k)\ud z.
\end{align*}
By the Cauchy--Schwarz inequality,
\begin{align*}
&\left|\sum_{k=1}^m\int_{I_k}\Upsilon_f(t,z,k)^{2p-1}\alpha_k (z)\frac{\partial \Upsilon_f(t,z,k)}{\partial z}\frac{\ud }{\ud z}\gamma(z,k)\ud z\right|\\
&\le\frac12\sum_{k=1}^m\int_{I_k}\Upsilon_f(t,z,k)^{2p-2}\Big|\frac{\partial}{\partial z}\Upsilon_f(t,z,k)\Big|^2\gamma(z,k)\alpha_k(z)\ud z\\
&\quad+\frac12\sum_{k=1}^m\int_{I_k}\Upsilon_f(t,z,k)^{2p}\alpha_k (z)|\frac{\ud }{\ud z}\gamma(z,k)|^2\gamma^{-1}(z,k)\ud z.
\end{align*}
 Furthermore, using Assumption \ref{asp:gamma}, it holds that
\begin{align*}
&\frac{\ud}{\ud t}\sum_{k=1}^m\int_{I_k}|\Upsilon_f(t,z,k)|^{2p}T_k(z)\gamma(z,k)\ud z\\
&\le -p(2p-\frac32)\sum_{k=1}^m\int_{I_k}\Upsilon_f(t,z,k)^{2p-2}\Big|\frac{\partial}{\partial z}\Upsilon_f(t,z,k)\Big|^2\gamma(z,k)\alpha_k(z) \ud z\\
&\quad+C\sum_{k=1}^m\int_{I_k}|\Upsilon_f(t,z,k)|^{2p}T_k(z)\gamma(z,k)\ud z.
\end{align*}
This, together with the Gronwall inequality, implies that for any $p\in \mathbb{N}^+$,
$$\sum_{k=1}^m\int_{I_k}|\Upsilon_f(t,z,k)|^{2p}T_k(z)\gamma(z,k)\ud z\le C\sum_{k=1}^m\int_{I_k}|f(t,z,k)|^{2p}T_k(z)\gamma(z,k)\ud z.$$
 This proves \eqref{eq:Stq} for $q=2p$ with $p\in\mathbb{N}^+$, which, along with the Riesz--Thorin interpolation theorem, completes the proof for general $q\ge2$.
 \hfill$\square$

\subsection{Proof of Lemma \ref{lem:M-Z}}\label{App:A2}
Combining \eqref{eq:Nskeleton} and
 \eqref{eq.controlledX-eq} results in
 \begin{align*}
	\ud (\bar{\mathfrak{M}}_{\epsilon}^{v^\epsilon}(t)-\bar{R}^{v^\epsilon}(t))&=\bar{L}(\bar{\mathfrak{M}}_{\epsilon}^{v^\epsilon}(t)-\bar{R}^{v^\epsilon}(t))\ud t \\
	&+\frac{B(\bar{u}^0(t)+\sqrt{\epsilon}\lambda(\epsilon)\bar{\mathfrak{M}}_{\epsilon}^{v^\epsilon}(t))-B(\bar{u}^0(t)+\sqrt{\epsilon}\lambda(\epsilon)\bar{R}^{v^\epsilon}(t))}{\sqrt{\epsilon}\lambda(\epsilon)}\ud t\\
	&+\left(\frac{B(\bar{u}^0(t)+\sqrt{\epsilon}\lambda(\epsilon)\bar{R}^{v^\epsilon}(t))-B(\bar{u}^0(t))}{\sqrt{\epsilon}\lambda(\epsilon)}-\mathcal{D}B(\bar{u}^0(t))\bar{R}^{v^\epsilon}(t)\right)\ud t\\
	&+\frac{1}{\lambda(\epsilon)}G(\bar{u}^0(t)+\sqrt{\epsilon}\lambda(\epsilon)\bar{\mathfrak{M}}_{\epsilon}^{v^\epsilon}(t))\ud \bar{\mathcal{W}}(t)\\
	&+\left(G(\bar{u}^0(t)+\sqrt{\epsilon}\lambda(\epsilon)\bar{\mathfrak{M}}_{\epsilon}^{v^\epsilon}(t))-G(\bar{u}^{0}(t)) \right){v^\epsilon}(t)^\wedge\ud t.
\end{align*}
By \eqref{Stbound}, the Lipschitz continuity of $B$ and $G$ in \eqref{eq:Glip}, and the mean value theorem, 
\begin{align*} \|\bar{\mathfrak{M}}_{\epsilon}^{v^\epsilon}(t)&-\bar{R}^{v^\epsilon}(t)\|_{\bar{H}_\gamma}\le C\int_0^t\|\bar{\mathfrak{M}}_{\epsilon}^{v^\epsilon}(s)-\bar{R}^{v^\epsilon}(s)\|_{\bar{H}_\gamma}\ud s 	\\\notag&\quad+C\int_0^t\int_0^1\left\|\left(\mathcal{D}B(\bar{u}^0(s)+\sqrt{\epsilon}\lambda(\epsilon)\theta \bar{R}^{v^\epsilon}(s))-\mathcal{D}B(\bar{u}^0(s))\right)\bar{R}^{v^\epsilon}(s)\right\|_{\bar{H}_\gamma} \ud \theta
\ud s\\
	&\quad+\frac{1}{\lambda(\epsilon)}\left\|\int_0^t \bar{S}(t-s)G(\bar{u}^0(s)+\sqrt{\epsilon}\lambda(\epsilon)\bar{\mathfrak{M}}_{\epsilon}^{v^\epsilon}(s))\ud \bar{\mathcal{W}}(s)\right\|_{\bar{H}_\gamma}\\
	&\quad+C\sqrt{\epsilon}\lambda(\epsilon)\int_0^t \|\bar{\mathfrak{M}}_{\epsilon}^{v^\epsilon}(s)\|_{\bar{H}_\gamma}\left\|{v^\epsilon}(s)\right\|_{\mathcal{U}_0}\ud s.
\end{align*}
Since $b^\prime$ is $\alpha_0$th H\"older continuous, Lemma \ref{lem:Rphibound} and \eqref{eq:B} yield that for any $\theta\in[0,1]$,
\begin{align*}
&\left\|\left(\mathcal{D}B(\bar{u}^0(s)+\sqrt{\epsilon}\lambda(\epsilon)\theta \bar{R}^{v^\epsilon}(s))-\mathcal{D}B(\bar{u}^0(s))\right)\bar{R}^{v^\epsilon}(s)\right\|_{\bar{H}_\gamma}^2\\
&\le C\left(\sqrt{\epsilon}\lambda(\epsilon)\right)^{2\alpha_0}\sum_{k=1}^m\int_{I_k}| \bar{R}^{v^\epsilon}(s,z,k)|^{2\alpha_0+2}\gamma(z,k)T_k(z)\ud z\\
&\le C\left(\sqrt{\epsilon}\lambda(\epsilon)\right)^{2\alpha_0}\left(1+\|\mathfrak{u}\|_{L^{2\alpha_0+2}(\R^2,\gamma^\vee\ud x)}^{2\alpha_0+2}\right),\quad \mathbb{P}\textup{-a.s.,}
\end{align*}
where the constant $C$ may depend on $N$ but is independent of $\omega\in\Omega$, $\theta\in[0,1]$ and $s\in[0,T]$. Therefore,
for any $v^\epsilon\in \mathcal{A}_N$ and $t_1\in(0,T]$,
\begin{align}\label{eq:M-R}
	&\E\left[\sup_{t\in[0,t_1]}\|\bar{\mathfrak{M}}_{\epsilon}^{v^\epsilon}(t)-\bar{R}^{v^\epsilon}(t)\|_{\bar{H}_\gamma}^2\right]\\\notag
	&\le C\int_0^{t_1}\E\left[\sup_{r\in[0,s]}\|\bar{\mathfrak{M}}_{\epsilon}^{v^\epsilon}(r)-\bar{R}^{v^\epsilon}(r)\|_{\bar{H}_\gamma}^2\right]\ud s 	+C(\sqrt{\epsilon}\lambda(\epsilon))^{2\alpha_0}\\\notag
	&\quad+\frac{1}{\lambda(\epsilon)^2}\E\left[\sup_{t\in[0,T]}\left\|\int_0^t \bar{S}(t-s)G(\bar{u}^0(s)+\sqrt{\epsilon}\lambda(\epsilon)\bar{\mathfrak{M}}_{\epsilon}^{v^\epsilon}(s))\ud \bar{\mathcal{W}}(s)\right\|_{\bar{H}_\gamma}^2\right]\\\notag
	&\quad+C(\sqrt{\epsilon}\lambda(\epsilon))^2N\int_0^{T}\E\left[ \|\bar{\mathfrak{M}}_{\epsilon}^{v^\epsilon}(s)\|_{\bar{H}_\gamma}^2\right]\ud s,
\end{align}
where $C>0$ is a constant that depends on $\mathfrak{u}$ but not on $\epsilon$.
By the factorization argument, it follows from Lemma \ref{lem:Mv}, \eqref{eq:u0} and the linear growth of $G$ in \eqref{eq:Glingr} that for any $q>2$,
\begin{gather}\label{eq:Gbound}
\E\left[\sup_{t\in[0,T]}\left\|\int_0^t \bar{S}(t-s)G(\bar{u}^0(s)+\sqrt{\epsilon}\lambda(\epsilon)\bar{\mathfrak{M}}_{\epsilon}^{v^\epsilon}(s))\ud \bar{\mathcal{W}}(s)\right\|^q_{\bar{H}_\gamma}\right]\le C.
\end{gather}
Inserting \eqref{eq:Gbound} and the estimate from Lemma \ref{lem:Mv} into \eqref{eq:M-R}, we infer that for any $v^\epsilon\in \mathcal{A}_N$ and $t_1\in(0,T]$,
\begin{align*}
	\E\left[\sup_{t\in[0,t_1]}\|\bar{\mathfrak{M}}_{\epsilon}^{v^\epsilon}(t)-\bar{R}^{v^\epsilon}(t)\|_{\bar{H}_\gamma}^2\right]&\le C\int_0^{t_1}\E\left[\sup_{r\in[0,s]}\|\bar{\mathfrak{M}}_{\epsilon}^{v^\epsilon}(r)-\bar{R}^{v^\epsilon}(r)\|_{\bar{H}_\gamma}^2\right]\ud s \\
	&\quad+C(\sqrt{\epsilon}\lambda(\epsilon))^{2\alpha_0}+C\lambda(\epsilon)^{-2}+C(\sqrt{\epsilon}\lambda(\epsilon))^2,
\end{align*}
where the constant $C>0$ is independent of $v^\epsilon\in \mathcal{A}_N$.
Finally, we complete the proof by applying the Gronwall inequality and using \eqref{eq:MDPpara}.
\hfill$\square$

\bibliographystyle{abbrv}
\bibliography{references}

\begin{thebibliography}{10}

\bibitem{AS83}
S.~Alexander.
\newblock Superconductivity of networks. {A} percolation approach to the
  effects of disorder.
\newblock {\em Phys. Rev. B (3)}, 27(3):1541--1557, 1983.

\bibitem{AMR20}
E.~Anderes, J.~M\o~ller, and J.~G. Rasmussen.
\newblock Isotropic covariance functions on graphs and their edges.
\newblock {\em Ann. Statist.}, 48(4):2478--2503, 2020.

\bibitem{BK13}
G.~Berkolaiko and P.~Kuchment.
\newblock {\em Introduction to {Q}uantum {G}raphs}, volume 186 of {\em
  Mathematical Surveys and Monographs}.
\newblock American Mathematical Society, Providence, RI, 2013.

\bibitem{BM12}
H.~Bessaih and A.~Millet.
\newblock Large deviations and the zero viscosity limit for 2{D} stochastic
  {N}avier-{S}tokes equations with free boundary.
\newblock {\em SIAM J. Math. Anal.}, 44(3):1861--1893, 2012.

\bibitem{BKKS24}
D.~Bolin, M.~Kov\'acs, V.~Kumar, and A.~B. Simas.
\newblock Regularity and numerical approximation of fractional elliptic
  differential equations on compact metric graphs.
\newblock {\em Math. Comp.}, 93(349):2439--2472, 2024.

\bibitem{BSW24}
D.~Bolin, A.~B. Simas, and J.~Wallin.
\newblock Gaussian {W}hittle-{M}at\'ern fields on metric graphs.
\newblock {\em Bernoulli}, 30(2):1611--1639, 2024.

\bibitem{BMZ08}
S.~Bonaccorsi, C.~Marinelli, and G.~Ziglio.
\newblock Stochastic {F}itz{H}ugh-{N}agumo equations on networks with impulsive
  noise.
\newblock {\em Electron. J. Probab.}, 13:no. 49, 1362--1379, 2008.

\bibitem{BM10}
S.~Bonaccorsi and D.~Mugnolo.
\newblock Existence of strong solutions for neuronal network dynamics driven by
  fractional {B}rownian motions.
\newblock {\em Stoch. Dyn.}, 10(3):441--464, 2010.

\bibitem{BD00}
A.~Budhiraja and P.~Dupuis.
\newblock A variational representation for positive functionals of infinite
  dimensional {B}rownian motion.
\newblock {\em Probab. Math. Statist.}, 20:39--61, 2000.

\bibitem{CF17}
S.~Cerrai and M.~Freidlin.
\newblock S{PDE}s on narrow domains and on graphs: an asymptotic approach.
\newblock {\em Ann. Inst. Henri Poincar\'{e} Probab. Stat.}, 53(2):865--899,
  2017.

\bibitem{CF19}
S.~Cerrai and M.~Freidlin.
\newblock Fast flow asymptotics for stochastic incompressible viscous fluids in
  {$\Bbb{R}^2$} and {SPDE}s on graphs.
\newblock {\em Probab. Theory Related Fields}, 173(1-2):491--535, 2019.

\bibitem{CH24}
S.~{Cerrai} and W.-T. {Hsu}.
\newblock {SPDEs on narrow channels and graphs: convergence and large
  deviations in case of non smooth noise}.
\newblock {\em arXiv:2403.13493}.

\bibitem{CR04}
S.~Cerrai and M.~R\"ockner.
\newblock Large deviations for stochastic reaction-diffusion systems with
  multiplicative noise and non-{L}ipschitz reaction term.
\newblock {\em Ann. Probab.}, 32(1B):1100--1139, 2004.

\bibitem{CX10}
X.~Chen.
\newblock {\em Random {W}alk {I}ntersections}, volume 157 of {\em Mathematical
  Surveys and Monographs}.
\newblock American Mathematical Society, Providence, RI, 2010.
\newblock Large deviations and related topics.

\bibitem{CRW13}
S.-K. Chua, S.~Rodney, and R.~Wheeden.
\newblock A compact embedding theorem for generalized {S}obolev spaces.
\newblock {\em Pacific J. Math.}, 265(1):17--57, 2013.

\bibitem{MR4612606}
J.~Cui, S.~Liu, and H.~Zhou.
\newblock Optimal control for stochastic nonlinear {S}chr\"{o}dinger equation
  on graph.
\newblock {\em SIAM J. Control Optim.}, 61(4):2021--2042, 2023.

\bibitem{CLZ23}
J.~Cui, S.~Liu, and H.~Zhou.
\newblock Wasserstein {H}amiltonian flow with common noise on graph.
\newblock {\em SIAM J. Appl. Math.}, 83(2):484--509, 2023.

\bibitem{CS24}
J.~Cui and D.~Sheng.
\newblock Asymptotic-preserving approximations for stochastic incompressible
  viscous fluids and spdes on graph.
\newblock {\em arXiv:2404.09168}.

\bibitem{DZ14}
G.~Da~Prato and J.~Zabczyk.
\newblock {\em Stochastic {E}quations in {I}nfinite {D}imensions, 2nd edn.},
  volume 152 of {\em Encyclopedia of Mathematics and its Applications}.
\newblock Cambridge University Press, Cambridge, 2014.

\bibitem{DZ10}
A.~Dembo and O.~Zeitouni.
\newblock {\em Large {D}eviations {T}echniques and {A}pplications}, volume~38
  of {\em Stochastic Modelling and Applied Probability}.
\newblock Springer-Verlag, Berlin, 2010.
\newblock Corrected reprint of the second (1998) edition.

\bibitem{EL10}
L.~C. Evans.
\newblock {\em Partial {D}ifferential {E}quations}, volume~19 of {\em Graduate
  Studies in Mathematics}.
\newblock American Mathematical Society, Providence, RI, second edition, 2010.

\bibitem{CRH87}
C.~Flesia, R.~Johnston, and H.~Kunz.
\newblock Strong localization of classical waves: A numerical study.
\newblock {\em Europhysics Letters}, 3(4):497, 1987.

\bibitem{FW12-PTRF}
M.~I. Freidlin and A.~D. Wentzell.
\newblock On the {N}eumann problem for {PDE}'s with a small parameter and the
  corresponding diffusion processes.
\newblock {\em Probab. Theory Related Fields}, 152(1-2):101--140, 2012.

\bibitem{FW12}
M.~I. Freidlin and A.~D. Wentzell.
\newblock {\em Random Perturbations of Dynamical Systems}, volume 260 of {\em
  Grundlehren der mathematischen Wissenschaften [Fundamental Principles of
  Mathematical Sciences]}.
\newblock Springer, Heidelberg, third edition, 2012.
\newblock Translated from the 1979 Russian original by Joseph Sz\"{u}cs.

\bibitem{GZ11}
F.~Gao and X.~Zhao.
\newblock Delta method in large deviations and moderate deviations for
  estimators.
\newblock {\em Ann. Statist.}, 39(2):1211--1240, 2011.

\bibitem{GLL24}
Y.~{Gao}, W.~{Li}, and J.-G. {Liu}.
\newblock {Fluctuations in {W}asserstein dynamics on {G}raphs}.
\newblock {\em arXiv:2408.08505}.

\bibitem{GU09}
V.~Gol'dshtein and A.~Ukhlov.
\newblock Weighted {S}obolev spaces and embedding theorems.
\newblock {\em Trans. Amer. Math. Soc.}, 361(7):3829--3850, 2009.

\bibitem{HH19}
P.~Harjulehto and P.~H\"{a}st\"{o}.
\newblock {\em Orlicz {S}paces and {G}eneralized {O}rlicz {S}paces}, volume
  2236 of {\em Lecture Notes in Mathematics}.
\newblock Springer, Cham, 2019.

\bibitem{HLL21}
W.~Hong, S.~Li, and W.~Liu.
\newblock Freidlin-{W}entzell type large deviation principle for multiscale
  locally monotone {SPDE}s.
\newblock {\em SIAM J. Math. Anal.}, 53(6):6517--6561, 2021.

\bibitem{KW83}
W.~C.~M. Kallenberg.
\newblock On moderate deviation theory in estimation.
\newblock {\em Ann. Statist.}, 11(2):498--504, 1983.

\bibitem{KKL01}
A.~Katchalov, Y.~Kurylev, and M.~Lassas.
\newblock {\em Inverse {B}oundary {S}pectral {P}roblems}, volume 123 of {\em
  Chapman \& Hall/CRC Monographs and Surveys in Pure and Applied Mathematics}.
\newblock Chapman \& Hall/CRC, Boca Raton, FL, 2001.

\bibitem{MM20}
J.~Maas and A.~Mielke.
\newblock Modeling of chemical reaction systems with detailed balance using
  gradient structures.
\newblock {\em J. Stat. Phys.}, 181(6):2257--2303, 2020.

\bibitem{MSZ21}
A.~Matoussi, W.~Sabbagh, and T.~Zhang.
\newblock Large deviation principles of obstacle problems for quasilinear
  stochastic {PDE}s.
\newblock {\em Appl. Math. Optim.}, 83(2):849--879, 2021.

\bibitem{LP36}
L.~Pauling.
\newblock The diamagnetic anisotropy of aromatic molecules.
\newblock {\em J. Chem. Phys.}, 4(10):673--677, 1936.

\bibitem{PZ97}
S.~Peszat and J.~Zabczyk.
\newblock Stochastic evolution equations with a spatially homogeneous {W}iener
  process.
\newblock {\em Stochastic Process. Appl.}, 72(2):187--204, 1997.

\bibitem{RW21}
P.~Ren and F.-Y. Wang.
\newblock Donsker-{V}aradhan large deviations for path-distribution dependent
  {SPDE}s.
\newblock {\em J. Math. Anal. Appl.}, 499(1):Paper No. 125000, 32, 2021.

\bibitem{SB82}
B.~Shapiro.
\newblock Renormalization-group transformation for the anderson transition.
\newblock {\em Phys. Rev. Lett.}, 48:823--825, Mar 1982.

\bibitem{WWZ24}
Y.~Wang, F.~Wu, and C.~Zhu.
\newblock Large deviations for regime-switching diffusions with infinite delay.
\newblock {\em Stochastic Process. Appl.}, 176:Paper No. 104418, 20, 2024.

\end{thebibliography}
\end{document}